\numberwithin{figure}{section}
\numberwithin{equation}{section}
\title{Enumeration of labeled trees and Dyck tilings}
\author[K.~Shigechi]{Keiichi~Shigechi}
\email{k1.shigechi AT gmail.com}
\date{\today}
\newcommand\tikzpic[2]{
\raisebox{#1\totalheight}{
\begin{tikzpicture}
#2
\end{tikzpicture}
}}
\newtheorem{theorem}[figure]{Theorem}%%[section]
\newtheorem{example}[figure]{Example}
\newtheorem{lemma}[figure]{Lemma}
\newtheorem{defn}[figure]{Definition}
\newtheorem{prop}[figure]{Proposition}
\newtheorem{remark}[figure]{Remark}
\begin{document}

\begin{abstract}
We study a partially ordered set of planar labeled rooted trees by use of 
combinatorial objects called Dyck tilings.
A generating function of the poset is factorized when the minimum element of the poset 
is $312$-avoiding and satisfies some extra condition. 
We define a cover relation on rational Dyck tilings by that of labeled 
trees, and show that increasing and decreasing labelings are dual to each other.
We consider two decompositions of a rational $(a,b)$-Dyck tiling: one is into $ab$ Dyck tilings 
and the other is into $a$ $(1,b)$-Dyck tilings.
In the first case, we show that the weight of the $(a,b)$-Dyck tiling is the sum of 
the weights of $ab$ Dyck tilings.
In the second case, we introduce a cover relation on $(1,b)$-Dyck tilings and obtain 
a poset of $(a,b)$-Dyck tilings by this decomposition.
\end{abstract}

\maketitle

\section{Introduction}
A planar rooted tree with $n$ edges is a planar tree with a distinguished node called the root.
By assigning an integer in $\{1,2,\ldots,n\}$ to an edge, we have a labeled tree.
We consider two special classes of labeled trees: one is increasing and the other is decreasing.
Since a tree is rooted, there is a unique sequence of edges from a leaf to the root. 
An increasing (resp. decreasing) tree is a tree such that the labels of the edges in a 
sequence from a leaf to the root are increasing (resp. decreasing) from the root to a leaf.
The enumeration of trees is a main research theme in combinatorial analysis.
For example, the number of increasing planar rooted tree with $n$ edges is 
given by $(2n-1)!!$ \cite{BerFlaSal92}. 
We refer the reader to \cite{BerLabLer91,Com74,GouJac83} for other results in combinatorial analysis.

We introduce a cover relation on labeled trees by a transposition of the labels.
The cover relation defines a partially ordered set (poset) of labeled trees, and we are 
interested in the generating function of labeled trees in this poset.
When the minimal element of the poset satisfies the conditions that it avoids the 
$312$-pattern, and satisfies another extra condition, the generating function has the factorized 
expression. Each factor in the expression corresponds to a simple generating function
characterized by a Young diagram.
In this paper, we consider an enumeration of such labeled trees, and the generating 
function of labeled trees through another combinatorial objects called Dyck tilings.
Basic definitions and properties of labeled trees are summarized in Section \ref{sec:LT}.

In Section \ref{sec:DT}, we summarize some basic facts about the cover-inclusive 
Dyck tilings following \cite{KW11,KMPW12,SZJ12}.
A Dyck tiling is a tiling of a region surrounded by the top and bottom Dyck paths by 
tiles called Dyck tiles.
A Dyck tile is characterized by a Dyck path, and we call a Dyck tile which 
is characterized by a Dyck path of size zero a trivial Dyck tile.
A Dyck tiling first appeared in the study of the Kazhdan--Lusztig 
polynomials for Grassmannian permutations in \cite{SZJ12}.
Dyck tilings also appeared in the context of the double-dimer model \cite{KW11} 
and spanning trees \cite{KW15}. 
In \cite{KMPW12}, a Dyck tiling above a Dyck path is identified with an 
increasing label of a planar rooted tree. 
A planar rooted tree $T$ corresponds to the bottom Dyck path, and 
its labeling on $T$ defines the top Dyck path and a Dyck tiling in the region 
between the top and bottom Dyck paths.
It is well-known that a planar rooted tree with $n$ edges is bijective to 
a Dyck path of size $n$.
Basic combinatorial tools for Dyck paths are thus labeled trees, 
Dyck tiling strips (DTS), and Hermite histories as used in \cite{KMPW12}.
The DTS operation and Hermite histories give bijections from 
a labeled trees to a Dyck tiling. 
In terms of labeled trees, they correspond to increasing 
and decreasing labeled trees respectively.
We are interested in the generating function of Dyck tilings in the poset 
of labeled trees introduced in Section \ref{sec:LT}.
We recall the $q$-analogue of the hook-length formula for the generating function 
for cover-inclusive Dyck tilings above a Dyck path, which appeared in \cite{KW11,KMPW12}.
The hook-length formula is a fraction of $q$-integers, and especially the expression 
is factorized.

In Section \ref{sec:LGV}, we study the generating function of trivial Dyck tiling
by use of the Lindstr\"om--Gessel--Viennot lemma \cite{GesVie85,Lin73}.
Since a tree structure gives a bottom Dyck path $\mu$, we consider a Dyck tiling 
consisting of only trivial Dyck tiles above $\mu$. 
Such a Dyck tiling is bijective to a Dyck path above $\mu$, and the generating 
function of such Dyck tilings can be expressed in terms of a determinant 
by the Lindstr\"om--Gessel--Viennot lemma \cite{GesVie85,Lin73}.
This is achieved by regarding a trivial Dyck tiling as a set of non-intersecting 
lattice paths. 
The generating function of trivial Dyck tilings is an important building 
block for the generating function of labeled trees studied in Section \ref{sec:gfLT}.

In Section \ref{sec:gfLT}, we will see that the generating function of labeled 
trees satisfies a recursive formula in general. 
The main theorem, Theorem \ref{thrm:GFLT312},  gives a factorized expression of 
the generating function of labeled trees such that the minimal element in the poset 
is $312$-avoiding and satisfies an extra condition.
A factor in the formula is given by the generating function of trivial Dyck tilings.
The $q$-analogue of the hook-length formula for the generating function mentioned 
in Section \ref{sec:DT} is a special case of a factorization in the main theorem.
The $312$-pattern avoidance of a labeled tree implies that the corresponding Dyck tiling 
does not have a non-trivial Dyck tiles. However, there are Dyck tilings which are $312$-containing 
but consist of only trivial Dyck tiles. In this sense, the pattern avoidance is an unavoidable condition
in Theorem \ref{thrm:GFLT312}.
We remark that the minimal element in the poset of labeled trees is $312$-avoiding does not mean that
other elements in the poset are $312$-avoiding. The poset may contain several $312$-containing labeled 
trees. A non-trivial Dyck tiling comes from a $312$-containing labeled tree.

In Section \ref{sec:crrev}, we also rephrase the cover relation of labeled trees in terms of a sequence of 
non-negative integers which encodes the same information as a labeled tree.
The poset of labeled trees introduced in Section \ref{sec:LT} is not in general a lattice.
To overcome this point, we introduce another cover relation on the integer sequences, 
hence on labeled trees. 
This new cover relation is weaker than the cover relation defined in Section \ref{sec:LT}.
We show that the poset of labeled trees under this new cover relation is a graded lattice.

In Section \ref{sec:rDT}, we study the rational $(a,b)$-Dyck tiling by generalizing the results of previous 
sections, especially Section \ref{sec:crrev}.
An $(a,b)$-Dyck tiling first studied in \cite{Shi21c} is a generalization of generalized Dyck tilings studied in \cite{JVK16}.
In the case of $(a,b)=(1,k)$, a Dyck tiling is shown to be 
bijective to a collection of sets of $k$ positive integers.
By defining a cover relation on an integer sequence introduced in Section \ref{sec:crrev},
we obtain a poset of rational Dyck tilings.  
A collection of sets can be naturally identified with the integer sequence, and with 
a labeled tree.
Recall we have two types of labels of a tree: one is increasing and the other is decreasing.
We show that these two labels are dual to each other, that is, one is associated to 
a $(1,k)$-Dyck tiling and the other is to a $(k,1)$-Dyck tiling.
We also study a decomposition of a $(1,k)$-Dyck tiling into a set of $k$ Dyck tilings where 
we use the decomposition studied in \cite{Shi21c,Shi21b}.
These $k$ Dyck tilings are shown to satisfy simple conditions.  
Finally, we study the $(a,b)$-Dyck tilings and its decomposition.
An $(a,b)$-Dyck tiling is decomposed into $ab$ Dyck tilings \cite{Shi21c,Shi21b}, and the weight of the tiling
is equal to the sum of the weights of $ab$ tilings.
As another application of the method obtained in previous sections, we consider 
the decomposition of a trivial $(a,b)$-Dyck tiling into $a$ trivial $(1,b)$-Dyck 
tilings. We introduce a cover relation on the vertical Hermite histories of these 
$(1,b)$-Dyck tilings, and obtain a poset of decomposed $(a,b)$-Dyck tilings.

\subsection*{Notation}
The quantum integer $[n]$ is given by $[n]:=(1-q^{n})/(1-q)$.	

\section{Labeled trees}
\label{sec:LT}
Let $T$ be a planar rooted tree with $n$ edges, 
and $E_{n}(T)$ the set of edges in $T$.
A label $L^{\uparrow}$, (resp. $L^{\downarrow}$) of $T$ is called 
{\it increasing label} (resp. {\it decreasing label}) if the map
$L^{\uparrow}$ (resp. $L^{\downarrow}$) $:E_{n}(T)\rightarrow[n]:=\{1,2,\ldots,n\}$
satisfies the following two conditions:
\begin{enumerate}
\item Each integer in $[n]$ is assigned to exactly one edge in $E_{n}(T)$.
\item The labels in $L^{\uparrow}$ (resp. $L^{\downarrow}$) are increasing 
(resp. decreasing) from the root to a leaf in $T$.
\end{enumerate}

Given an edge $e\in E_{n}(T)$, we have a unique sequence of edges from $e$ 
to an edge $e_{\mathfrak{r}}$ which is adjacent to the root $\mathfrak{r}$.
We denote this sequence by 
\begin{align}
\label{eq:defp}
p(e):=(e=e_0\rightarrow e_1\rightarrow e_2\rightarrow\dots\rightarrow e_{\mathfrak{r}}),
\end{align}
where $e_{i}\rightarrow e_{i+1}$ means that the edge $e_{i+1}$ is the parent edge 
of $e_{i}$.

Let $p(e)$ and $p(e')$ be two sequences of edges from $e$ and $e'$.
We define the set $p_{\cap}(e,e'):=p(e)\cap p(e')$ as the 
intersection of the two sets $p(e)$ and $p(e')$.
Note that $p_{\cap}(e,e')=\emptyset$ if two edges $e$ and $e'$ in 
different subtrees of the root $\mathfrak{r}$.
The set $p_{\cap}(e,e')\neq\emptyset$ if $e$ and $e'$ belong to 
the same subtree of $\mathfrak{r}$ since $e_{\mathfrak{r}}\in p(e),p(e')$.

We say that the edge $e$ is strictly right (resp. left) to the edge $e'$ 
if $p(e)\setminus p_{\cap}(e,e')\neq\emptyset$, $p(e')\setminus p_{\cap}(e,e')\neq\emptyset$ 
and $e$ is right (resp. left) to $e'$ in the tree $T$.
If $p_{\cap}(e,e')=p(e')$, we say that the edge $e$ is weakly right or weakly left to the edge 
$e'$.

Let $e_{i}, 1\le i\le 3$, be three edges in $T$ such that $e_{i}$ is strictly right 
to $e_{i+1}$ for $1\le i\le 2$.
Denote by $L(e_{i})$ the label $L$ of the edge $e_{i}$.

\begin{defn}
Let $e_{i}, 1\le i\le 3$, be edges as above. 
A labeling $L$ is said to be $312$-avoiding if and only if 
there exist no three edges $e_{i}, 1\le i\le 3$, such that
\begin{align}
L(e_2)<L(e_3)<L(e_1).
\end{align}
\end{defn}

We introduce one more notion for a node.
\begin{defn}
Let $n$ be a node in a tree $T$. The node $n$ is called a branch point
if $n$ has at least two edges below it.
\end{defn}

We will construct a poset of labeled trees as follows.
Let $L$ and $L'$ be two decreasing labels of the tree $T$, and $e_{l}$ and $e_{r}$
be two edges in $E_{n}(T)$ such that $e_{l}$ is strictly left to $e_{r}$.
Note that the two edges $e_{l}$ and $e_{r}$ may not be leaves in the tree.
In this notation, we define a covering relation on the labels.

\begin{defn}
\label{def:coverL}
We say that $L'$ covers $L$ (denoted $L\lessdot$ L') if and only if $L$ and $L'$ 
satisfy the following conditions:
\begin{enumerate}
\item $L'$ is increasing (resp. decreasing) if $L$ is increasing (resp. decreasing).
\item An edge $e\in E_{n}(T)\setminus\{e_l,e_r\}$ satisfies $L(e)=L'(e)$.
\item The labels satisfy $L(e_l)<L(e_r)$, $L'(e_r)=L(e_l)$ and $L'(e_l)=L(e_r)$.
\item There exists no edge $e_{c}$ such that 
\begin{enumerate}
\item The edge $e_c$ is strictly right to $e_{l}$ and strictly left to $e_r$.
\item The label $L(e_c)\in[L(e_l),L(e_r)]$.
\end{enumerate}
\end{enumerate}
\end{defn}
We write $L\le L'$ if there exists an unrefineable sequence 
\begin{align}
L=L_{0}\lessdot L_1 \lessdot\ldots\lessdot L_{p}=L',
\end{align}
where $p\ge0$.

\begin{defn}
\label{defn:posetlt}
We denote the poset of labeled trees whose minimal element 
is $L_{0}$ by $(L_{0},\lessdot)$.
\end{defn}

\begin{example}
We consider the following poset for labeled trees.
The left-most decreasing labeled tree in Figure \ref{fig:LTHasse} 
is the minimal element in the poset.	
\begin{figure}[ht]
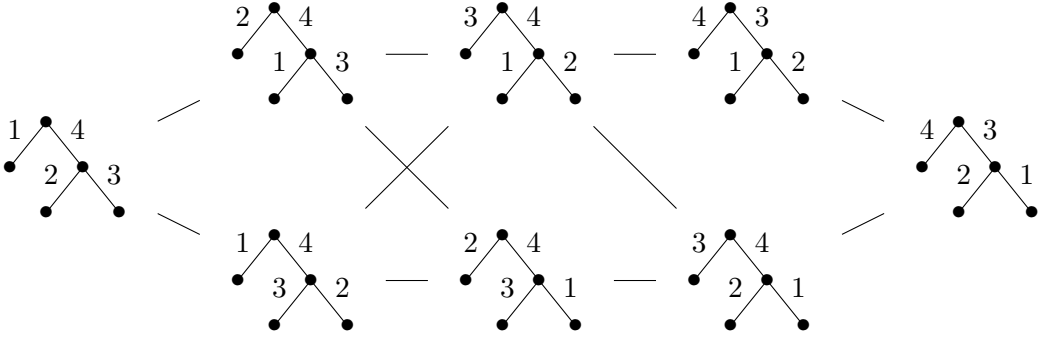

\tikzpic{-0.5}{	

\node(l1)at(0,0){
\tikzpic{-0.5}{[scale=0.6]
\draw(0,0)--(-0.8,-1)(0,0)--(0.8,-1);
\draw(0.8,-1)--(0,-2)(0.8,-1)--(1.6,-2);
\draw(-0.3,-0.6)node[anchor=south east]{$1$}(0.3,-0.6)node[anchor=south west]{$4$}
(0.5,-1.6)node[anchor=south east]{$2$}(1.1,-1.6)node[anchor=south west]{$3$};
\draw(0,0)node{$\bullet$}(-0.8,-1)node{$\bullet$}(0.8,-1)node{$\bullet$}
(0,-2)node{$\bullet$}(1.6,-2)node{$\bullet$};
}
};
\node(l2)at(3,1.5){
\tikzpic{-0.5}{[scale=0.6]
\draw(0,0)--(-0.8,-1)(0,0)--(0.8,-1);
\draw(0.8,-1)--(0,-2)(0.8,-1)--(1.6,-2);
\draw(-0.3,-0.6)node[anchor=south east]{$2$}(0.3,-0.6)node[anchor=south west]{$4$}
(0.5,-1.6)node[anchor=south east]{$1$}(1.1,-1.6)node[anchor=south west]{$3$};
\draw(0,0)node{$\bullet$}(-0.8,-1)node{$\bullet$}(0.8,-1)node{$\bullet$}
(0,-2)node{$\bullet$}(1.6,-2)node{$\bullet$};
}
};
\node(l3)at(3,-1.5){
\tikzpic{-0.5}{[scale=0.6]
\draw(0,0)--(-0.8,-1)(0,0)--(0.8,-1);
\draw(0.8,-1)--(0,-2)(0.8,-1)--(1.6,-2);
\draw(-0.3,-0.6)node[anchor=south east]{$1$}(0.3,-0.6)node[anchor=south west]{$4$}
(0.5,-1.6)node[anchor=south east]{$3$}(1.1,-1.6)node[anchor=south west]{$2$};
\draw(0,0)node{$\bullet$}(-0.8,-1)node{$\bullet$}(0.8,-1)node{$\bullet$}
(0,-2)node{$\bullet$}(1.6,-2)node{$\bullet$};
}
};
\node(l4)at(6,1.5){
\tikzpic{-0.5}{[scale=0.6]
\draw(0,0)--(-0.8,-1)(0,0)--(0.8,-1);
\draw(0.8,-1)--(0,-2)(0.8,-1)--(1.6,-2);
\draw(-0.3,-0.6)node[anchor=south east]{$3$}(0.3,-0.6)node[anchor=south west]{$4$}
(0.5,-1.6)node[anchor=south east]{$1$}(1.1,-1.6)node[anchor=south west]{$2$};
\draw(0,0)node{$\bullet$}(-0.8,-1)node{$\bullet$}(0.8,-1)node{$\bullet$}
(0,-2)node{$\bullet$}(1.6,-2)node{$\bullet$};
}
};
\node(l5)at(6,-1.5){
\tikzpic{-0.5}{[scale=0.6]
\draw(0,0)--(-0.8,-1)(0,0)--(0.8,-1);
\draw(0.8,-1)--(0,-2)(0.8,-1)--(1.6,-2);
\draw(-0.3,-0.6)node[anchor=south east]{$2$}(0.3,-0.6)node[anchor=south west]{$4$}
(0.5,-1.6)node[anchor=south east]{$3$}(1.1,-1.6)node[anchor=south west]{$1$};
\draw(0,0)node{$\bullet$}(-0.8,-1)node{$\bullet$}(0.8,-1)node{$\bullet$}
(0,-2)node{$\bullet$}(1.6,-2)node{$\bullet$};
}
};
\node(l6)at(9,1.5){
\tikzpic{-0.5}{[scale=0.6]
\draw(0,0)--(-0.8,-1)(0,0)--(0.8,-1);
\draw(0.8,-1)--(0,-2)(0.8,-1)--(1.6,-2);
\draw(-0.3,-0.6)node[anchor=south east]{$4$}(0.3,-0.6)node[anchor=south west]{$3$}
(0.5,-1.6)node[anchor=south east]{$1$}(1.1,-1.6)node[anchor=south west]{$2$};
\draw(0,0)node{$\bullet$}(-0.8,-1)node{$\bullet$}(0.8,-1)node{$\bullet$}
(0,-2)node{$\bullet$}(1.6,-2)node{$\bullet$};
}
};
\node(l7)at(9,-1.5){
\tikzpic{-0.5}{[scale=0.6]
\draw(0,0)--(-0.8,-1)(0,0)--(0.8,-1);
\draw(0.8,-1)--(0,-2)(0.8,-1)--(1.6,-2);
\draw(-0.3,-0.6)node[anchor=south east]{$3$}(0.3,-0.6)node[anchor=south west]{$4$}
(0.5,-1.6)node[anchor=south east]{$2$}(1.1,-1.6)node[anchor=south west]{$1$};
\draw(0,0)node{$\bullet$}(-0.8,-1)node{$\bullet$}(0.8,-1)node{$\bullet$}
(0,-2)node{$\bullet$}(1.6,-2)node{$\bullet$};
}
};
\node(l8)at(12,0){
\tikzpic{-0.5}{[scale=0.6]
\draw(0,0)--(-0.8,-1)(0,0)--(0.8,-1);
\draw(0.8,-1)--(0,-2)(0.8,-1)--(1.6,-2);
\draw(-0.3,-0.6)node[anchor=south east]{$4$}(0.3,-0.6)node[anchor=south west]{$3$}
(0.5,-1.6)node[anchor=south east]{$2$}(1.1,-1.6)node[anchor=south west]{$1$};
\draw(0,0)node{$\bullet$}(-0.8,-1)node{$\bullet$}(0.8,-1)node{$\bullet$}
(0,-2)node{$\bullet$}(1.6,-2)node{$\bullet$};
}
};
\draw(l1)to(l2)to(l4)to(l6)to(l8) (l1)to(l3)to(l5)to(l7)to(l8)(l3)to(l4);
\draw(l2)to(l5)(l4)to(l7);
}
\caption{Hasse diagram of the poset of labeled trees.}
\label{fig:LTHasse}
\end{figure}
\end{example}

\begin{remark}
Note that the poset in Figure \ref{fig:LTHasse} is not a lattice since there are two 
labeled trees which cover the two labeled tree in the second column.
In Section \ref{sec:crrev}, we introduce a cover relation which is weaker than 
the cover relation in Definition \ref{def:coverL}, and show that the new poset
is a graded lattice.
\end{remark}

Let $L$ be a label on $T$.
\begin{defn}
We define the word $\omega(L):=(\omega_{1},\ldots,\omega_{n})$ (resp. $\omega'(L)$) to be 
the post-order (resp. pre-order) word (from left to right) of the label $L$
with respect to $T$.
\end{defn}
In the above definition, the post-order means as follows. We first read the label on the left-most 
edge $e$ connected to a leaf in $T$. This label gives an element $\omega_{1}$.
We denote by $T\setminus\{e\}$ the tree which is obtained from $T$ by deleting the edge $e$.
Then, we read the label on the left-most edge $e'$ connected to a leaf in $T\setminus\{e\}$.
The label of $e'$ gives $\omega_{2}$.
We continue this process until we visit all the edges in $T$. 
Then, the sequence of the labels $\omega_1, \omega_2, \ldots, \omega_n$ gives $\omega(L)$.

Similarly, the pre-order means as follows.
We fist read the label on the left-most edge $e_1$ connected to the root in $T$. This label
gives an element $\omega'_1$.
Then, we read the label of the left-most edge $e_2$ connected to $e_1$, and this label 
gives $\omega'_2$.
In other words, we first visit the left-most subtree. 
Then, we visit the left-most subtree in the remaining tree.
The sequence of labels gives $\omega'(L)$.

By construction, the word $\omega(L)$ (or $\omega'(L)$) can be naturally identified 
to a permutation in $[n]$.
By regarding $\omega(L)$ (or $\omega'(L)$) as a permutation $w$, we define the 
inversion number $\mathrm{inv}(w)$ by 
\begin{align*}
\mathrm{inv}(w)
:=\sum_{i=1}^{n}\#\{1\le j\le i-1 : w_{j}<w_{i}\}.
\end{align*}
For example, we have $\mathrm{inv}(\omega)=2$ if $\omega=52431$.
Note that we have $\mathrm{inv}(w)=0$ for the longest permutation 
$w=(n,n-1,\cdots,1)$ and this definition is different from the standard 
notion of the inversion number.

We define a generating function of labeled trees as follows.
\begin{defn}
\label{defn:ZLT}
Let $L$ be a label of $T$. Then, we define 
the generating function $Z(L,T)$ by
\begin{align}
Z(L,T):=\sum_{L\le L'}q^{\mathrm{inv}(\omega(L'))}.
\end{align} 
\end{defn}
In Section \ref{sec:DT}, we consider the generating function $Z(L,T)$ where 
the word of the increasing labeled tree $L$ is $1,\ldots,n$ by pre-order 
from left to right. This case is studied in \cite{KW11,KMPW12}.

In Section \ref{sec:gfLT}, we relax the condition on $L$. However, we impose 
the condition that the labeled tree is $312$-avoiding. 
This pattern avoidance is necessary for the generating function to have 
a factorization property. 

\section{Dyck tilings}
\label{sec:DT}
\subsection{Dyck paths and planar rooted trees}
We begin with the correspondence between a tree $T$ and a Dyck path $\lambda(T)$.
A {\it Dyck path} $\lambda=\lambda_1\ldots \lambda_{2n}$ of size $n$ is 
a sequence of $n$ up ($U$) steps and $n$ down ($D$) steps such that 
\begin{align}
\label{eq:condDyck}
\#\{\lambda_j=U : 1\le j\le i\}\ge \#\{\lambda_j=D : 1\le j\le i\},
\end{align} 
for all $1\le i\le 2n$.

\begin{example}
We have five Dyck paths of size $3$:
\begin{align*}
UUUDDD, \quad UUDUDD, \quad UDUUDD, \quad UUDDUD, \quad UDUDUD.
\end{align*}
\end{example}

Let $\lambda$ be a Dyck path of size $n$.
Suppose that $\lambda_{j}=D$ with $2\le j\le n$.
From the condition (\ref{eq:condDyck}), there exists a unique maximal integer $1\le i\le j-1$
such that 
\begin{enumerate}
\item $\lambda_{i}=U$,
\item $\#\{\lambda_{k}=U : i\le k\le j\}=\#\{\lambda_{k}=D : i\le k\le j\}$.
\end{enumerate}
We say that the pair $(i,j)$ is a {\it chord pair} if it satisfies the condition above.

Consider $2n$ points which are enumerated by $1,2,\ldots, 2n$ from left to right.
We connect the two points labeled $i$ and $j$ by an arch if the pair $(i,j)$ is 
a chord pair.
Since $\lambda$ is of size $n$, the diagram consists of $n$ arches. 
By definition of chord pairs, these $n$ arches are non-crossing.
We call a diagram with $n$ non-crossing arches a chord diagram.
We denote by $\mathrm{Arc}(\lambda)$ the set of chord pairs in the Dyck path $\lambda$. 
We define the length of a chord pair $a:=(i,j)$ by $|a|=(j-i+1)/2$.

A planar rooted tree $T$ corresponding to a Dyck path $\lambda$ as follows.
Note that we have a bijection between Dyck paths and non-crossing chord diagrams.
Below, we often identify a Dyck path with a chord diagram.

We call a Dyck path $\lambda$ of size $n$ {\it prime} if $(1,2n)$ is a chord pair 
in $\lambda$.
Then, any Dyck path $\lambda$ can be expressed as a concatenation of prime 
Dyck paths, i.e., 
$\lambda=\lambda_1\circ\lambda_2\circ\ldots\circ\lambda_{p}$ where 
Dyck paths $\lambda_{i}$, $1\le i\le p$, are all prime. 

The planar rooted tree $T(\lambda)$ for a Dyck path $\lambda$ is recursively 
obtained by the following way.
\begin{enumerate}
\item If $\lambda=U\lambda'D$ is a prime Dyck path, the tree $T(\lambda)$ 
is obtained from $T(\lambda')$ by attaching an edge above the root 
of $T(\lambda')$.
\item Suppose $\lambda=\lambda_1\circ\lambda_2\circ\ldots\circ\lambda_{p}$. 
Then, the tree $T(\lambda)$ is obtained from $T(\lambda_{i}), 1\le i\le p$, 
by gluing their roots into a single root.
\item A empty Dyck path corresponds to a planar rooted tree which consists of 
only the root.
\end{enumerate}

\begin{remark}
In the above recursive construction, the number of edges $T(\lambda)$ is one plus 
the number of edges in $T(\lambda')$. 
On the other hand, the number of edges remains the same before and after the second 
operation (2).
These imply that an edge in $T(\lambda)$ is bijective to a chord pair in $\lambda$. 
\end{remark}

The above recursive construction of a tree $T$ from a Dyck path $\lambda$ can 
be easily reversed. Therefore, we have a bijection between trees $T$ with $n$ edges 
and Dyck paths of size $n$.
By this bijection, we identify a tree $T$ with a Dyck path, and vice versa.
 
We denote by $\mathrm{Arch}(T)=\mathrm{Arch}(\lambda)$ the set of 
chord pairs in $\lambda$, or equivalently $T$.

\subsection{Dyck tilings}
We introduce the notion of Dyck tilings following \cite{KW11,KMPW12,SZJ12}.
In this work, we focus on only cover-inclusive Dyck tilings.

Let $\lambda$ be a Dyck paths of size $n$.
We introduce a notion of Dyck tiling above a Dyck path $\lambda$.
In this paper, we consider only cover-inclusive Dyck tilings.
Then, we introduce two operations, called Dyck tiling strip and Hermite history, 
which yield a Dyck tiling from an increasing or decreasing label following \cite{KMPW12}.

A {\it Dyck tile} of size $n$ is a connected ribbon (a skew shape which does not 
contain a two-by-two square) such that the center of boxes (which consists of the ribbon) 
form a Dyck path of size $n$.
Thus, a Dyck tile of size $n$ consists of $2n+1$ unit boxes.
We say that  a Dyck tile of size zero is a trivial Dyck tile, and a Dyck tile of size $n\ge1$
is a non-trivial Dyck tile.

Let $\lambda$ and $\mu$ be two Dyck paths such that $\mu$ is above $\lambda$.
Then, a Dyck tiling is a tiling by Dyck tiles in the region between $\lambda$ and $\mu$.
A cover-inclusive Dyck tiling is a Dyck tiling such that the sizes of Dyck tiles are weakly 
decreasing from bottom to top. 
More precisely, let $d_1$ be a Dyck tile. If we move $d_1$ downward, then it is 
contained in another Dyck tile $d_2$ of below the bottom Dyck path.
In this paper, a Dyck tiling always means a cover-inclusive Dyck tiling.

An example of a Dyck tiling is given in Figure \ref{fig:exDT}.
This Dyck tiling has four trivial Dyck tiles, and two non-trivial Dyck tiles 
of size one and two.
\begin{figure}[ht]
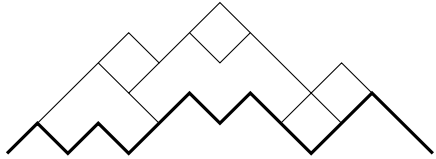

\tikzpic{-0.5}{[scale=0.4]
\draw[very thick](0,0)--(1,1)--(2,0)--(3,1)--(4,0)--(6,2)--(7,1)--(8,2)--(10,0)--(12,2)--(14,0);
\draw(1,1)--(4,4)--(5,3)--(7,5)--(10,2)--(11,3)--(12,2);
\draw(3,3)--(5,1)(4,2)--(5,3)(6,4)--(7,3)--(8,4)(11,1)--(10,2)--(9,1);
}
\caption{Example of a Dyck tiling}
\label{fig:exDT}
\end{figure}

\subsubsection{DTS operation}
We introduce a combinatorial operation called {\it Dyck tiling strips} (DTS) operation following \cite{KMPW12}.
Given an increasing label $L$ of size $n$, we will construct 
a Dyck tiling $\mathcal{DTS}(L;\lambda)$ above a Dyck path $\lambda$ or equivalently a tree $T$.
Let $L'$ be the increasing label obtained from $L$ by deleting 
the edge $e(n)$ with label $n$. This deletion is well-defined since 
$L$ is increasing implies that the edge $e(n)$ is connected to a leaf in 
$T$.
By the correspondence between a tree $T$ and a Dyck path $\lambda$, the edge $e(n)$ in $T$
corresponds to a pattern $UD$ in $\lambda$.
Suppose we have a Dyck tiling $\mathcal{DTS}(L';\lambda')$.
The Dyck path $\lambda$ can be obtained from $\lambda'$ by inserting a Dyck path $UD$ 
at a point $p'$ of $\lambda'$.
Since $UD$ form a peak in $\lambda$, the insertion of $UD$ gives a new peak $p$ in $\lambda$.

If $\mathcal{DTS}(L';\lambda')$ has a Dyck tile $d$ above the point $p'$ in $\lambda'$,
we increase the size of the tile $d$ by one such that its shape is compatible with 
the Dyck path $\lambda$.
Let $D'$ be the Dyck tiling obtained from $\mathcal{DTS}(L';\lambda')$.
In Figure \ref{fig:insUD}, we show an example of insertion of $UD$ to a Dyck tiling.
We insert $UD$ at the point $p'$ (it is a peak in $\lambda'$), and this gives 
a new peak $p$ in $\lambda$.
Two Dyck tilings of size zero and one in the left picture 
are enlarged to the Dyck tilings of size one and two in the right picture.
\begin{figure}[ht]
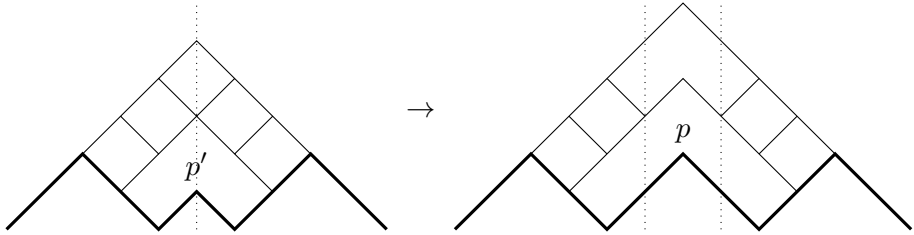

\begin{align*}
\tikzpic{-0.5}{[scale=0.5]
\draw[very thick](0,0)--(2,2)--(4,0)--(5,1)node[anchor=south]{$p'$}--(6,0)--(8,2)--(10,0);
\draw(2,2)--(5,5)--(8,2)(3,1)--(5,3)--(7,1)(3,3)--(4,2)(4,4)--(5,3)--(6,4)(6,2)--(7,3);
\draw[dotted](5,0)--(5,6);
}\rightarrow
\tikzpic{-0.5}{[scale=0.5]
\draw[very thick](0,0)--(2,2)--(4,0)--(6,2)node[anchor=south]{$p$}--(8,0)--(10,2)--(12,0);
\draw(2,2)--(6,6)--(10,2)(3,1)--(6,4)--(9,1);
\draw(3,3)--(4,2)(4,4)--(5,3)(9,3)--(8,2)(8,4)--(7,3);
\draw[dotted](5,0)--(5,6)(7,0)--(7,6);	
}
\end{align*}
\caption{Insertion of $UD$ to a Dyck tiling}
\label{fig:insUD}
\end{figure}

If we have up steps on the top Dyck path of $D'$ right to the peak $p$, 
we put a trivial Dyck tile $d$ to each up step such that the southeast edge of $d$ is attached to 
the up step.
In Figure \ref{fig:addtD}, we give an example of addition of trivial tiles.
There are two up steps right to the peak $p$. Therefore, we add two Dyck tiles of size zero 
to these two up steps. The added boxes are marked by $*$.
%%%%%%%%%%
\begin{figure}[ht]
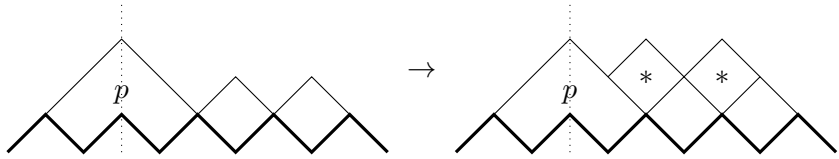

\begin{align*}
\tikzpic{-0.5}{[scale=0.5]
\draw[very thick](0,0)--(1,1)--(2,0)--(3,1)node[anchor=south]{$p$}--(4,0)--(5,1)--(6,0)--(7,1)--(8,0)--(9,1)--(10,0);
\draw(1,1)--(3,3)--(5,1)--(6,2)--(7,1)--(8,2)--(9,1);
\draw[dotted](3,0)--(3,4);
}
\rightarrow
\tikzpic{-0.5}{[scale=0.5]
\draw[very thick](0,0)--(1,1)--(2,0)--(3,1)node[anchor=south]{$p$}--(4,0)--(5,1)--(6,0)--(7,1)--(8,0)--(9,1)--(10,0);
\draw(1,1)--(3,3)--(5,1)--(6,2)--(7,1)--(8,2)--(9,1);
\draw[dotted](3,0)--(3,4);
\draw(4,2)--(5,3)--(6,2)--(7,3)--(8,2);
\draw(5,2)node{$*$}(7,2)node{$*$};
}
\end{align*}
\caption{Addition of trivial Dyck tiles}
\label{fig:addtD}
\end{figure}

For $n=1$, we have a unique Dyck path $UD$ and its trivial Dyck tiling above it, i.e., 
a Dyck tiling consisting of a Dyck path $UD$. 

We call the operation to construct the Dyck tiling $\mathcal{DTS}(L;\lambda)$ 
from $L$ and $\lambda$ the {\it Dyck tiling strip} (DTS) operation.

\subsubsection{Hermite history}
We introduce the notion of Hermite histories following \cite[Section 3]{KMPW12}.
Given a Dyck tiling $D$ above a Dyck paths $\lambda$ of size $n$, we will define a sequence of 
integers $\mathbf{h}:=(h_1,\ldots,h_{n})$, and construct a decreasing label on the tree $T(\lambda)$.
Recall that a Dyck tile of size $m$ has the weight $m+1$ for $m\ge1$.
We consider a line passing through Dyck tiles as in Figure \ref{fig:Hh}. More precisely, 
we draw a line from the center of an up step such that it passes through a Dyck tile from the south-east edge 
to the left-most north-west edge. 
Let $c(u)$ be the coordinate of an up step $u$.
If there exists $p$ Dyck tiles above an up step $u_1$ such that the lines in the Dyck tiles associate to other edges, 
we draw a line $l$ from the point $u_2$ satisfying $c(u_2)=c(u_1)+(0,p)$.
Note that the point $u_2$ is always on an edge of Dyck tile which is up-right. 
Then, we say that the line $l$ is associated to the up step $u_1$.
The collection of the lines associated to up steps is called an {\it Hermite history}.
Given an Hermite history, we assign an integer $h_{n+1-i}$ to the $i$-th up step from right as follows.
The value $h_{n+1-i}$ is the sum of the weight of Dyck tiles in the line $l$ associated to the $i$-th 
up step.
By regarding $\mathbf{h}$ as a Lehmer code of a permutation, the sequence $\mathbf{h}$ defines 
a post-order word $\omega_{h}$ reading from right to left.
Since the Dyck path $\lambda$ defines a tree $T$, and we have a word $\omega_h$, 
these determine a decreasing label of $T$.
In this way, we have a correspondence between a Dyck tiling and a decreasing label through 
the Hermite history.
It is easy to see that given a decreasing label $L^{\downarrow}$ and a tree $T$, 
we have a unique Dyck tiling whose Hermite history gives the Lehmer code $\mathbf{h}$ 
of the post-order word of $L^{\downarrow}$.
From these, we have a bijection between a Dyck tiling and a decreasing label through 
an Hermite history.

An example of an Hermite history is given in Figure \ref{fig:Hh}.
The corresponding tree $T$ is characterized by the bottom Dyck path $UDUDU^2D^2UD$,
and its decreasing label has the post-order word $51423$, and the pre-order 
word $54123$ read from right to left.

\begin{figure}[ht]
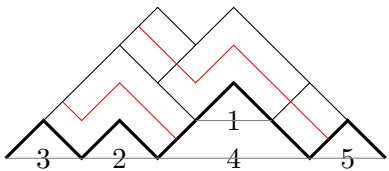

\tikzpic{-0.5}{[scale=0.5]
\draw[very thick](0,0)--(1,1)--(2,0)--(3,1)--(4,0)--(6,2)--(8,0)--(9,1)--(10,0);
\draw(1,1)--(4,4)--(5,3)--(6,4)--(9,1)(3,3)--(5,1)(4,2)--(5,3)(8,2)--(7,1);
\draw(9,0)node{$5$}(6,1)node{$1$}(6,0)node{$4$}(3,0)node{$2$}(1,0)node{$3$};
\draw[gray](0,0)--(10,0)(5,1)--(7,1);
\draw[red](3.5,3.5)--(5,2)--(6,3)--(8.5,0.5)(1.5,1.5)--(2,1)--(3,2)--(4.5,0.5);
}
\caption{An Hermite history and an decreasing label}
\label{fig:Hh}
\end{figure}

\subsection{Relation between 
\texorpdfstring{$L^{\uparrow}$}{Lup} and \texorpdfstring{$L^{\downarrow}$}{Ldown}}
Let $D$ be a Dyck tiling and $L^{\uparrow}$ the increasing label of $D$ through 
the DTS operation .
Similarly, denote by $L^{\downarrow}$ the decreasing label of $D$ through 
the Hermite history.

We denote the operation to exchange $i$ and $n+1-i$ by the bar operation, i.e., 
$\bar{i}=n+1-i$.
\begin{prop}
\label{prop:Lupdown}
Two labels $L^{\uparrow}$ and $L^{\downarrow}$ satisfy
\begin{align}
\label{eq:LupLd}
L^{\downarrow}(e)=\overline{L^{\uparrow}(e)},
\end{align}
where $e\in E_{n}(T)$.
\end{prop}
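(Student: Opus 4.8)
The plan is to prove the identity by induction on $n$, peeling off at each step the edge of largest label from $L^{\uparrow}$ (equivalently, of smallest label from $L^{\downarrow}$). The base case $n=1$ is immediate: the tree has a single edge, $\lambda=UD$, the unique Dyck tiling forces $L^{\uparrow}(e)=L^{\downarrow}(e)=1$, and $\overline{1}=1$.

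For the inductive step, let $e_{*}\in E_{n}(T)$ be the unique edge with $L^{\uparrow}(e_{*})=n$. Since $L^{\uparrow}$ is increasing, $e_{*}$ is a leaf edge, corresponding to a peak $UD$ of $\lambda$; this peak is precisely the one inserted in the final step of the DTS construction of $D$. Write $T'=T\setminus\{e_{*}\}$ with Dyck path $\lambda'$, let $M$ be the increasing label on $T'$ obtained by restricting $L^{\uparrow}$, and set $D'=\mathcal{DTS}(M;\lambda')$. By the recursive definition of the DTS operation, $D$ is obtained from $D'$ by one insertion step (insert the peak, enlarge the tile above the insertion point if there is one, and add trivial tiles to the up steps to the right of the new peak); hence $D'$ is recovered from $D$ by undoing that step. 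Applying the inductive hypothesis to $D'$, its decreasing label assigns to each $e\in E_{n-1}(T')$ the value $(n-1)+1-M(e)=n-L^{\uparrow}(e)$.

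It then suffices to compare the Hermite histories of $D$ and $D'$ and to establish: (a) in $D$ the edge $e_{*}$ receives decreasing label $1$; and (b) for every other edge $e$, the decreasing label of $D$ at $e$ exceeds that of $D'$ at $e$ by exactly one. Granting (a) and (b), for $e\neq e_{*}$ we obtain $L^{\downarrow}(e)=(n-L^{\uparrow}(e))+1=(n+1)-L^{\uparrow}(e)=\overline{L^{\uparrow}(e)}$, and $L^{\downarrow}(e_{*})=1=(n+1)-n=\overline{L^{\uparrow}(e_{*})}$, which closes the induction.

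The heart of the argument — and the step I expect to be the main obstacle — is the verification of (a) and (b): one must track, entirely within the Hermite-history picture, the effect of a single DTS insertion step on the lines associated to the up steps. Using the line-drawing rule recalled above (a line from an up step starts $p$ boxes higher when $p$ of the boxes directly above that up step already belong to the lines of other edges) together with the weight convention (a trivial tile has weight $1$, a size-$m$ tile weight $m+1$), the expected picture is that the enlarged tile, when present, increases by exactly $1$ the total weight of whichever line runs through it; that each trivial tile added to an up step to the right of the new peak contributes weight $1$ to the line of that up step and to no other; that the up step of the new peak carries a line whose total weight places it in the position of the value $1$ in the post-order word decoded from the Lehmer code; and that no other line is affected. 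Verifying precisely that these are the only changes — equivalently, that the Lehmer code of $D$ is obtained from that of $D'$ by inserting a single new entry and shifting the others into their correct positions — is the local combinatorial computation on which the induction rests. One could alternatively run the induction from the decreasing side, peeling off the edge of label $1$ via the Hermite history and recognising the induced operation on Dyck tilings as the inverse of the final DTS step; the amount of bookkeeping is comparable.
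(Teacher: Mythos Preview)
Your approach is essentially the same as the paper's: induction on $n$, peeling off the edge with $L^{\uparrow}$-label $n$ (equivalently $L^{\downarrow}$-label $1$) and comparing the DTS insertion with the corresponding Hermite-history insertion. The paper states the base case at $n=2$ rather than $n=1$ and is considerably terser---where you isolate claims (a) and (b) and describe the expected local effect of the DTS step on the Hermite-history lines, the paper simply asserts that the addition of trivial tiles in the DTS step matches the labels right of $1$ in the Hermite history and concludes with ``it is easy to see''; neither carries out the bookkeeping in full detail.
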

%%%%%%%%%%%%%%%%%%
\begin{proof}
We prove the statement by induction on $n$. When $n=2$, we have only three Dyck tilings 
and it is easy to check Eq. (\ref{eq:LupLd}) by a calculation.
Suppose that Eq. (\ref{eq:LupLd}) holds true up to $n-1$.
Let $D$ be a Dyck tiling of size $n-1$, and $L^{\uparrow}$ and $L^{\downarrow}$ be 
the increasing and decreasing label for the tiling $D$.
We insert $n$ by the DTS operation into $L^{\uparrow}$.
On the other hand, we insert $1$ by the Hermite history into $L^{\downarrow}$ 
after increasing all the labels of $D$ by one.
We insert a peak $UD$ into the tiling $D$ for the both cases.
Note that the positions of the insertion are the same for both cases.
In the DTS operation, we have an addition of trivial Dyck tiles.
This process reflects that there are several labels 
which are right to $1$ in the Hermite history, and we have to add trivial Dyck tiles
to construct a Dyck tiling of size $n$ from $D$.
From these, it is easy to see that we have Eq. (\ref{eq:LupLd}).
\end{proof}

Below, we often identify the label $L^{\uparrow}$ with the label 
$L^{\downarrow}$ by Proposition \ref{prop:Lupdown}.

\subsection{Generating functions}
By the correspondence between $L^{\uparrow}$ and $L^{\downarrow}$ 
through the DTS operations and Hermite histories,
we have two descriptions of a Dyck tiling via $L^{\uparrow}$ and $L^{\downarrow}$.
Below, we mainly work on $L^{\downarrow}$ rather than $L^{\uparrow}$.

Consider a Dyck path $\lambda(n,m):=U^{n}D^{n}U^{m}D^{m}$.
Let $L^{\downarrow}_{0}$ be a decreasing label of the tree $T:=T(\lambda(n,m))$.
By construction of the tree $T$ for $\lambda(n,m)$, 
it is obvious that the label $L^{\downarrow}_{0}$ is $312$-avoiding. 
Consider the Hermite history for the label $L^{\downarrow}_{0}$ on $\lambda(n,m)$.
The Hermite history gives a unique Dyck tiling on $\lambda(n,m)$, and this Dyck 
tiling contains only trivial Dyck tiles.
We enumerate the up steps in $\lambda(n,m)$ by $1,2,\ldots, n+m$ from left to right.
We denote by $u_{i}$, $1\le i\le m$, the $n+i$-th up step from left.
By an Hermite history, we have $\mu_{i}$ unit boxes (which are trivial Dyck tiles)
associated to the up step $u_{i}$.
Since $L^{\downarrow}_{0}$ is decreasing, we have $\mu_{i}\ge \mu_{i+1}$ for 
$1\le i\le m-1$.

Define a Young diagram $\mu:=\mu(L^{\downarrow}_{0};\lambda(n,m))=(\mu_1,\mu_2,\ldots, \mu_{m})$.
Given two Young diagram $\nu=(\nu_1,\ldots,\nu_{m})$ and $\nu'=(\nu'_1,\ldots,\nu'_{m})$,
we write $\nu'\subseteq\nu$ if and only if $\nu'_{i}\le\nu_{i}$ for all $1\le i\le m$.
We define $|\nu|:=\sum_{i=1}^{m}\nu_{m}$, namely, $|\nu|$ is the number of unit boxes 
in $\nu$.

Then, we define the generating function
\begin{align}
\label{eq:defYmu}
\begin{aligned}
Y^{\uparrow}(\mu)&:=\sum_{\mu'\subseteq\mu}q^{|\mu'|}, \\
Y^{\downarrow}(\mu)&:=\sum_{\mu'\subseteq\mu}q^{|\mu|-|\mu'|}.
\end{aligned}
\end{align}

In general, the generating function $Y(\mu)$ has no factorization formula like 
Theorem \ref{thrm:FacDT1}.
For example, we have 
\begin{align*}
Y^{\downarrow}(\mu)=1+2q+q^2+q^3,
\end{align*}
if $\mu=(2,1)$.
However, when $\mu$ is a rectangle, i.e., $\mu=(p^{q})$ for some integer 
$p$ and $q\le m$, $Y(\mu)$ has a factorization formula 
\begin{align*}
Y^{\uparrow}(p^q)=Y^{\downarrow}(p^{q})=\genfrac{}{}{}{}{[p+q]!}{[p]![q]!}.
\end{align*}
This formula can be easily obtained by considering the number of up-right lattice paths 
in the rectangle $\mu$.

Let $D$ be a Dyck tiling. Then, we assign a weight $\mathrm{wt}(d)$ to a 
Dyck tile $d$ by use of the size $l(d)$ of the Dyck path characterizing $d$,
{\it i.e.},
\begin{align*}
\mathrm{wt}(d):=l(d)+1.
\end{align*}
By definition, the weight of a trivial Dyck tile is one.
Then, we define the weight $\mathrm{wt}(D)$ of a Dyck tiling
by the sum of the weights of Dyck tiles which form $D$.

\begin{defn}
Let $\lambda$ and $\mu$ be two Dyck paths such that $\mu$ is above $\lambda$.
The generating function of Dyck tilings is defined to be 
\begin{align*}
\mathtt{Dyck}(\lambda,\mu)
:=\sum_{D}\prod_{d\in D}q^{\mathrm{wt}(d)},
\end{align*}
where $D$ is a Dyck tiling in the region above $\lambda$ and below $\mu$, 
and $d$ is a Dyck tile in $D$.
\end{defn}

Given two Dyck paths $\lambda$ and $\mu$, we define the following generating 
function.
\begin{defn}
We define 
\begin{align*}
Z(\lambda,\mu):=\sum_{\lambda\le\nu\le\mu}\mathtt{Dyck}(\lambda,\nu).
\end{align*}
\end{defn}

\begin{example}
Consider $\lambda=(UD)^3$ and $\mu=U^3D^3$. Then, we have the six Dyck tilings above $\lambda$.
\begin{align*}
\tikzpic{-0.5}{[scale=0.3]
\draw(0,0)--(1,1)--(2,0)--(3,1)--(4,0)--(5,1)--(6,0);	
}\quad
\tikzpic{-0.5}{[scale=0.3]
\draw(0,0)--(1,1)--(2,0)--(3,1)--(4,0)--(5,1)--(6,0);
\draw(1,1)--(2,2)--(3,1);
}\quad
\tikzpic{-0.5}{[scale=0.3]
\draw(0,0)--(1,1)--(2,0)--(3,1)--(4,0)--(5,1)--(6,0);
\draw(3,1)--(4,2)--(5,1);
}\quad\tikzpic{-0.5}{[scale=0.3]
\draw(0,0)--(1,1)--(2,0)--(3,1)--(4,0)--(5,1)--(6,0);
\draw(1,1)--(2,2)--(3,1);
\draw(3,1)--(4,2)--(5,1);
}\quad
\tikzpic{-0.5}{[scale=0.3]
\draw(0,0)--(1,1)--(2,0)--(3,1)--(4,0)--(5,1)--(6,0);
\draw(1,1)--(3,3)--(5,1);
}\quad
\tikzpic{-0.5}{[scale=0.3]
\draw(0,0)--(1,1)--(2,0)--(3,1)--(4,0)--(5,1)--(6,0);
\draw(1,1)--(2,2)--(3,1);
\draw(3,1)--(4,2)--(5,1);\draw(2,2)--(3,3)--(4,2);
}
\end{align*}
The generating function is given by 
\begin{align*}
Z(\lambda,\mu)&=1+2q+2q^2+q^{3}, \\
&=[3][2].
\end{align*}
\end{example}

Let $T$ be a tree with $n$ edges.
Suppose that $L^{\uparrow}_{0}$ is an increasing label whose post-order word 
from right to left is $(n,n-1,\ldots,1)$.

Recall that Definition \ref{defn:ZLT} gives the generating function $Z(L,T)$ of 
labeled trees $L'$ with the tree $T$. 
\begin{theorem}[{\cite[Conjecture 1]{KW11}}, {\cite[Theorem 1.1]{K12}}]
\label{thrm:FacDT1}
The generating function of labeled trees on $T$ is given by 
\begin{align}
\label{eq:ZL0T}
Z(L^{\uparrow}_{0},T)	
=\genfrac{}{}{}{}{[n]!}{\prod_{a\in\mathrm{Arc}(\lambda)}[|a|]},
\end{align}
where $\lambda$ is the Dyck path corresponding to $T$, and $|a|$ is the length of a chord pair $a$.
\end{theorem}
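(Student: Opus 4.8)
The plan is to prove the hook-length formula \eqref{eq:ZL0T} by induction on the number of edges $n$, peeling off the edge with the largest label via the DTS operation. Recall that, under the DTS bijection, $Z(L^{\uparrow}_0,T)$ is the sum of $q^{\mathrm{wt}(D)}$ over all cover-inclusive Dyck tilings $D$ above the Dyck path $\lambda=\lambda(T)$ (since $L^{\uparrow}_0$ is the minimal element in its poset and $Z(\lambda,\mu)=\sum_{\lambda\le\nu\le U^nD^n}\mathtt{Dyck}(\lambda,\nu)$ counts all tilings above $\lambda$). So it suffices to establish
\begin{align*}
\sum_{D\ \mathrm{above}\ \lambda}q^{\mathrm{wt}(D)}
=\frac{[n]!}{\prod_{a\in\mathrm{Arc}(\lambda)}[|a|]}.
\end{align*}
First I would fix the edge $e(n)$ carrying the label $n$; since $L^{\uparrow}_0$ is increasing, $e(n)$ is attached to a leaf, so it corresponds to a peak $UD$ in $\lambda$, and deleting it yields a tree $T'$ with Dyck path $\lambda'$ of size $n-1$ on which the induction hypothesis gives $Z(L^{\uparrow}_0,T')=[n-1]!/\prod_{a\in\mathrm{Arc}(\lambda')}[|a|]$.

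The heart of the argument is to track what the DTS insertion of the peak $p$ (corresponding to $e(n)$) does to the weight. Given a tiling $D'$ above $\lambda'$, the insertion of $UD$ at $p'$ possibly enlarges one stack of tiles sitting over $p'$ and then appends a string of trivial tiles to the up-steps lying to the right of the new peak $p$. I would classify the possible images: for each tiling $D'$ above $\lambda'$ there is a family of tilings $D$ above $\lambda$ obtained by choosing how far up the newly created "channel" at $p$ the enlargement/addition reaches — equivalently, by choosing an integer $0\le k\le (\text{available height at }p)$. The available height is exactly the number of tiles that would be cut by a horizontal ray from $p$, and summing $q^{\text{extra weight}}$ over the allowed values of $k$ produces a factor $[m]$ where $m$ is the size of the chord pair newly created, i.e. $m=|a_n|$ with $a_n$ the chord pair of $e(n)$ in $\lambda$. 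More precisely, I expect the bookkeeping to yield
\begin{align*}
\sum_{D\ \mathrm{above}\ \lambda}q^{\mathrm{wt}(D)}
=[n]\cdot\frac{1}{[\,|a_n|\,]}\cdot\!\!\sum_{D'\ \mathrm{above}\ \lambda'}\!\! q^{\mathrm{wt}(D')}\cdot\prod_{a}\frac{[\text{something}]}{[\text{something}]},
\end{align*}
where the correction product accounts for the chord pairs of $\lambda'$ whose lengths change when the peak is inserted. Combined with the identity $\prod_{a\in\mathrm{Arc}(\lambda)}[|a|]=[\,|a_n|\,]\cdot\prod_{a\in\mathrm{Arc}(\lambda')}[|a|']$ relating the two chord-length multisets (each chord of $\lambda'$ containing the insertion point $p'$ grows by one in $\lambda$, and a new chord $a_n$ of the appropriate length appears), the inductive step closes. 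An alternative, and probably cleaner, route is to use the Hermite-history picture: a tiling above $\lambda$ is equivalent to a decreasing labeling $L^{\downarrow}$ of $T$, whose post-order word is an arbitrary permutation refined by the tree order, and $\mathrm{wt}(D)=\mathrm{inv}$ of that word in the sense of Definition~\ref{defn:ZLT}; then the claimed formula becomes the standard $q$-hook-length identity for the linear extensions of a forest poset, provable by the same leaf-removal recursion but with the arithmetic of $q$-multinomials replacing the geometric bookkeeping of tiles.

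The main obstacle is the precise combinatorial accounting of weights under the DTS insertion: one must verify that enlarging a stack of tiles by one at the insertion point, together with adding the new trivial tiles to the right, contributes weight exactly equal to the choice parameter $k$ ranging over $\{0,1,\dots,|a_n|-1\}$ (so that $\sum_k q^k=[\,|a_n|\,]$ is the factor being divided in), and that no cover-inclusivity constraint is violated or double-counted. This is exactly the content of \cite[Theorem 1.1]{K12} and \cite[Conjecture 1]{KW11}, so I would follow the spindle/DTS argument there; the contribution of the present write-up is to phrase it in the language of labeled trees developed in Section~\ref{sec:LT}, after which \eqref{eq:ZL0T} is the special case of the general factorization (Theorem~\ref{thrm:GFLT312}) where the Young diagrams $\mu$ attached to the branch structure of $T$ are all rectangles and hence each $Y(\mu)$ collapses to a single $q$-binomial $[p+q]!/[p]![q]!$.
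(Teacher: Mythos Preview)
Your reduction of $Z(L^{\uparrow}_0,T)$ to the Dyck-tiling generating function $Z(\lambda,U^nD^n)$ is exactly what the paper's proof does; the paper then simply invokes \cite{KW11,K12,KMPW12} for the factorized expression without rederiving it, and the Remark immediately following the theorem points to the Bj\"orner--Wachs $q$-hook-length formula for forest linear extensions --- precisely your ``alternative, cleaner route.'' So at the level of what the paper actually argues, you match it. Your closing observation that \eqref{eq:ZL0T} is the full-rectangle case of Theorem~\ref{thrm:GFLT312} is a valid derivation the paper does not make: for the global minimal decreasing label the top path is $U^nD^n$, each $\mu_i$ fills its rectangle $R_i$, each $Y^{\downarrow}(\mu_i)$ becomes a $q$-binomial, and their product over the branch points telescopes via the $q$-multinomial identity to $[n]!/\prod_{a}[|a|]$.

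Your primary inductive approach, however, has a gap. You fix the single leaf $e(n)$ determined by $L^{\uparrow}_0$ and pass to $T'=T\setminus e(n)$, expecting a recursion $Z(\lambda)=(\text{simple factor})\cdot Z(\lambda')$. But in the sum over all increasing labelings $L$ of $T$ the edge carrying label $n$ can be \emph{any} leaf, and the reverse DTS step deletes that leaf, not a fixed one; the tilings above $\lambda$ are therefore not parametrized by the tilings above one fixed $\lambda'$ together with a height parameter $k$. Concretely, removing a fixed peak $p$ changes the arc-length multiset by replacing each $[|a|]$ with $[|a|-1]$ for the arcs strictly containing $p$ and dropping one $[1]$, so the ratio $\prod_{a\in\mathrm{Arc}(\lambda)}[|a|]\big/\prod_{a'\in\mathrm{Arc}(\lambda')}[|a'|]$ has no reason to combine with a single $[n]$ into your displayed identity; your ``correction product'' has no closed form here. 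The arguments in \cite{K12,KMPW12} circumvent this by decomposing along the tree structure (prime concatenation and root removal) rather than by deleting one prescribed peak --- which is your alternative route, not your primary one.
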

%%%%%%%%%%%%%%%
\begin{proof}
The label $L^{\uparrow}_{0}$ is an increasing tree, and its post-order word from right to left
is $n\ldots21$. 
The inversion number of this post-order word is zero. 
The generating function involves all increasing trees $L'$ such that $L^{\uparrow}_{0}\le L'$.
By the DTS operation, each $L'$ corresponds to a Dyck tiling above the Dyck path $\lambda(T)$.
Further, it is clear that the inversion number $\mathrm{inv}(\omega(L'))$ is equal to 
the weight of the Dyck tiling.
Therefore, we have 
\begin{align*}
Z(L^{\uparrow}_{0},T)=Z(\lambda,\lambda_{\mathrm{top}}),
\end{align*}
where $\lambda_{\mathrm{top}}$ is the top path $U^nD^n$.
Then, $Z(\lambda,\lambda_{\mathrm{top}})$ has a factorized expression as in the r.h.s. 
of Eq. (\ref{eq:ZL0T}) by \cite{KW11,K12,KMPW12}.
\end{proof}

\begin{remark}
The expression (\ref{eq:ZL0T}) has the form of the tree hook-length formula 
by Knuth \cite[page 70]{Knu73} for the number of linear extensions of the poset on trees. 
Bj\"orner and Wachs obtained the $q$-analogue of Knuth's tree hook-length 
formula in \cite{BjoWac89}. Once we have a correspondence between a Dyck tiling and a labeled
tree, the expression (\ref{eq:ZL0T}) follows from the $q$-analogue of hook-length formula
and the DTS operation.
The proof of Theorem \ref{thrm:FacDT1} is based on the proof given in \cite{KMPW12}.
\end{remark}

The expression (\ref{eq:ZL0T}) implies that the generating function $Z(L_0^{\uparrow},T)$ 
is given by the fraction of $q$-integers.
In Section \ref{sec:gfLT}, we consider the generating function of labeled trees by relaxing 
the conditions on $L_0^{\uparrow}$. 
The generating function is no longer written as the hook-length formula in general,
however, it still has a factorization. Each factor in the expression comes from the 
combinatorial structure of a tree.

\section{Determinant expressions of trivial Dyck tilings}
\label{sec:LGV}
We say that a Dyck tiling above $\lambda$ is a trivial Dyck tiling if it contains only trivial Dyck 
tiles, {\it i.e.}, it does not contain non-trivial Dyck tiles.
Then, it is easy to see that a trivial Dyck tiling is bijective to a Dyck path 
above the Dyck path $\lambda$.

We first recall the Lindstr\"om--Gessel--Viennot lemma \cite{GesVie85,Lin73} 
which gives an expression of the number of lattice paths in terms of a determinant.
Let $X:=\{x_1,\ldots,x_n\}$ and $Y:=\{y_1,\ldots,y_n\}$ be the set of 
vertices. 
Given two vertices $x$ and $y$, we denote by $w(x,y)$ the sum of the 
weights given to paths from $x$ to $y$.
If we give the weight one to a path from $x$ to $y$, the sum $w(x,y)$
is nothing but the number of paths from $x$ to $y$.
An $n$-tuple of non-intersecting paths from $X$ to $Y$ is an $n$-tuple 
$(P_1,\ldots,P_{n})$ satisfying the following two conditions: 
(1) there exists a permutation $\rho$ of $[1,n]$ such that 
$P_i$ is a paths from $x_i$ to $y_{\rho(i)}$,
and (2) if $i\neq j$, the paths $P_{i}$ and $P_{j}$ do not occupy the same 
vertex.
Let $\mathcal{P}:=(P_1,\ldots,P_n)$ and $\rho(P)$ be the permutation satisfying 
the above conditions (1) and (2). 
We denote by $w(P_{i}):=w(x_i,y_{\rho(i)})$ the weight given to the path 
$P_{i}$ from $x_i$ to $y_{\rho(i)}$.
\begin{lemma}[\cite{GesVie85,Lin73}]
The weighted sum of non-intersecting paths from $X$ to $Y$ is given by 
the following determinant: 
\begin{align*}
\det[w(x_i,y_{j})]_{1\le i,j\le n}=
\sum_{\mathcal{P}:X\rightarrow Y}\mathrm{sign}(\rho(P))\prod_{i=1}^{n}w(P_{i}).
\end{align*}
\end{lemma}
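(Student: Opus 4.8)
The statement to prove is the Lindström–Gessel–Viennot lemma: that
$\det[w(x_i,y_j)]_{1\le i,j\le n}=\sum_{\mathcal P:X\to Y}\mathrm{sign}(\rho(P))\prod_{i=1}^n w(P_i)$, where the sum on the right is over $n$-tuples of non-intersecting paths. The plan is to give the standard sign-reversing involution argument. First I would expand the left-hand determinant by its definition as a signed sum over permutations, $\det[w(x_i,y_j)] = \sum_{\rho\in S_n}\mathrm{sign}(\rho)\prod_{i=1}^n w(x_i,y_{\rho(i)})$, and then, since $w(x_i,y_{\rho(i)})$ is by definition the weighted sum over all paths from $x_i$ to $y_{\rho(i)}$, distribute the product over paths. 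This rewrites the left-hand side as $\sum_{\rho\in S_n}\mathrm{sign}(\rho)\sum_{(P_1,\dots,P_n)}\prod_i w(P_i)$, where the inner sum ranges over all $n$-tuples of paths (intersecting or not) with $P_i:x_i\to y_{\rho(i)}$. Equivalently, it is a signed sum over all pairs $(\mathcal P,\rho)$ consisting of an $n$-tuple of paths and the permutation recording their endpoints.

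Next I would split this sum into the contribution of non-intersecting tuples and the contribution of tuples in which at least two paths share a vertex. The non-intersecting part is exactly the right-hand side of the claimed identity, so it suffices to show the intersecting part cancels. For this I construct a sign-reversing, weight-preserving involution $\Phi$ on the set of intersecting path-tuples. Given such a tuple $(P_1,\dots,P_n)$ with endpoint permutation $\rho$, choose the intersecting pair by a fixed rule: let $i<j$ be the lexicographically smallest pair such that $P_i$ and $P_j$ meet, and let $v$ be the first vertex (along $P_i$, say) at which $P_i$ meets some such later path; then swap the tails of $P_i$ and $P_j$ after $v$. This produces a new tuple $(P_1',\dots,P_n')$ in which $P_i'$ now ends at $y_{\rho(j)}$ and $P_j'$ at $y_{\rho(i)}$, so the endpoint permutation becomes $\rho\circ(i\,j)$, whose sign is $-\mathrm{sign}(\rho)$. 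The product of path weights is unchanged since we only rearranged which edges belong to which path. I would check $\Phi$ is an involution — the key point being that the "first intersecting pair and first intersection vertex" selection rule is invariant under the swap, so applying $\Phi$ twice returns the original tuple.

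The main obstacle, and the step requiring the most care, is verifying that the selection rule used to define $\Phi$ is genuinely preserved by the tail swap: one must argue that after swapping tails at $v$, the set of vertices where paths intersect, together with the designated pair $(i,j)$ and vertex $v$, is the same as before, so that $\Phi(\Phi(\mathcal P))=\mathcal P$. This is mildly delicate because swapping could in principle create or destroy intersections elsewhere; the resolution is that the multiset of edges is unchanged, hence the set of shared vertices is literally unchanged, and only the labeling of tails past $v$ is altered — and the rule picks $(i,j)$ and $v$ from data that survives. Once $\Phi$ is seen to be a weight-preserving involution pairing tuples of opposite sign, those terms cancel in pairs, leaving only the non-intersecting tuples, which establishes the identity. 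I would also remark that if $X$ and $Y$ are positioned so that the only permutation admitting non-intersecting connections is the identity (as will be the case in our application to trivial Dyck tilings, where $\rho$ is forced), the formula simplifies to a positive sum and the determinant counts non-intersecting path families directly.
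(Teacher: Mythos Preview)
The paper does not prove this lemma at all: it is stated with a citation to \cite{GesVie85,Lin73} and then used as a black box, so there is no ``paper's own proof'' to compare against. Your plan is exactly the classical sign-reversing involution argument from those references, and it is correct.

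One small point worth tightening when you write it up: your selection rule for the involution is slightly ambiguously worded. You first choose the lexicographically smallest intersecting pair $(i,j)$ and then pick ``the first vertex along $P_i$ at which $P_i$ meets some such later path.'' The safe (and standard) order is: first take the smallest index $i$ whose path meets another; then take the first vertex $v$ along $P_i$ at which this happens; and only then take the smallest $j\neq i$ such that $P_j$ passes through $v$. With that order the tail-swap visibly preserves both $i$ and $v$ (the prefix of $P_i$ up to $v$ is untouched), and hence also $j$, so $\Phi^2=\mathrm{id}$ is immediate. Picking $(i,j)$ before $v$, as your phrasing suggests, can in principle lead to a $v$ not lying on $P_j$, or to a different pair being selected after the swap; the argument still goes through, but the bookkeeping is cleaner the other way. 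Your closing remark about the identity permutation being forced in the intended application is exactly right and is how the paper uses the lemma.
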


We apply the Lindstr\"om--Gessel--Viennot lemma to an enumeration of trivial Dyck tilings 
above $\lambda(n,m):=U^nD^nU^mD^m$.
Let $\mu:=(\mu_1,\ldots, \mu_{m})$ be a Young diagram above $\lambda(n,m)$.
We have simple conditions $n\ge\mu_1\ge\mu_2\ldots\ge\mu_{m}\ge0$.
We define $2m$ points $\{a_i,b_i: 1\le i\le m\}$ on $\mathbb{Z}^{2}$ as follows:
\begin{align*}
a_{i}&=(i-1,i-1), \\
b_{i}&=a_{i}-(0,1)+(\mu_{m+1-i},0)=(\mu_{m+1-i}+i-1,i-2).
\end{align*}
In $\mathbb{Z}^2$, we consider right-down paths from $a_{i}$ to $b_{j}$. 
We denote by $\mathcal{P}(a_i,b_{j})$ the set of such paths from $a_{i}$ to $b_{j}$.
Let $p_{i,j}\in\mathcal{P}(a_i,b_{j})$ be a path.
By definition, a path $p_{i,j}$ consists of right steps and down steps.
Let $r$ be a right step in $p_{i,j}$ and $y(r)$ be the $y$-coordinate of $r$ in $\mathbb{Z}^2$.
Then, we define the weight of $p_{i,j}$ by 
\begin{align*}
\mathrm{wt}(p_{i,j}):=\prod_{r\in p_{i,j}}q^{y(r)+1}.
\end{align*}
By using the weight for a path $p_{i,j}$, we define a weight for all the paths from $a_{i}$ to $b_{j}$
by
\begin{align*}
\mathrm{wt}(a_{i}\rightarrow b_{j}):=\sum_{p_{i,j}\in\mathcal{P}(a_i,b_j)}\mathrm{wt}(p_{i,j}).
\end{align*}
Then, we define the determinant by 
\begin{align}
\label{eq:wtab}
\mathrm{Det}(\mathbf{a}\rightarrow\mathbf{b}):=
q^{-D}\det\left(\mathrm{wt}(a_{i}\rightarrow b_{j})\right)_{1\le i,j\le n},
\end{align}
where 
\begin{align*}
D=\sum_{i=1}^{m}(i-1)\mu_{m+1-i}.
\end{align*}

Recall that $Y^{\uparrow}(\mu)$ in Eq. (\ref{eq:defYmu}) is the generating function of Young diagram inside $\mu$.
The next proposition is a direct consequence of the Lindstr\"om--Gessel--Viennot formula.
\begin{prop}
\label{prop:Ydet}
The generating function $Y^{\uparrow}(\mu)$ has a determinant expression:
\begin{align}
\label{eq:Ydet1}
Y^{\uparrow}(\mu)=\mathrm{Det}(\mathbf{a}\rightarrow\mathbf{b}).
\end{align}
\end{prop}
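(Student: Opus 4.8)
The plan is to realize the sum $Y^{\uparrow}(\mu)=\sum_{\mu'\subseteq\mu}q^{|\mu'|}$ as a weighted count of a single family of $m$ non-intersecting right-down lattice paths, and then apply the Lindstr\"om--Gessel--Viennot lemma to rewrite that count as the determinant $\det\left(\mathrm{wt}(a_i\rightarrow b_j)\right)_{1\le i,j\le m}$, up to the global prefactor $q^{-D}$. First I would set up the bijection between sub-diagrams $\mu'\subseteq\mu$ and $m$-tuples of non-intersecting right-down paths: the boundary of $\mu'$ decomposes into $m$ individual monotone paths, where the $i$-th path runs from $a_i=(i-1,i-1)$ to $b_i=(\mu_{m+1-i}+i-1,i-2)$, so that the permutation $\rho$ forced by the non-intersecting condition is the identity and all off-diagonal terms of the LGV expansion cancel. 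The shift by $(i-1,i-1)$ in the starting points and the drop by one unit in the $y$-coordinate of the endpoints is exactly the standard device that turns the staircase-nested boundary lines of a Young diagram into genuinely disjoint lattice paths.

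Next I would check that the path weight $\mathrm{wt}(p_{i,j})=\prod_{r\in p_{i,j}}q^{y(r)+1}$ correctly tracks the number of boxes of $\mu'$ under this bijection. The key observation is that a right step $r$ of the $i$-th boundary path at height $y(r)$ contributes a column of boxes of $\mu'$ lying strictly below it; summed over all right steps of all paths, the exponent $\sum_r (y(r)+1)$ counts precisely the boxes of $\mu'$ once one accounts for the artificial $(i-1)$-shift of the axes, which inflates the nominal height of each step of the $i$-th path by $i-1$. This over-counting is uniform: along the $i$-th path the total spurious contribution is $(i-1)\mu_{m+1-i}$, since that path takes exactly $\mu_{m+1-i}$ right steps. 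Summing over $i$ gives the correction $D=\sum_{i=1}^m (i-1)\mu_{m+1-i}$, which is exactly the power of $q$ divided out in the definition of $\mathrm{Det}(\mathbf a\rightarrow\mathbf b)$. Hence $q^{-D}\,\mathrm{wt}(p_{i,i})$-products combine to $q^{|\mu'|}$.

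With these two ingredients in place, the proof is a direct invocation of the Lindstr\"om--Gessel--Viennot lemma: the non-intersecting $m$-tuples from $\{a_i\}$ to $\{b_i\}$ are in weight-preserving bijection with sub-diagrams $\mu'\subseteq\mu$ (weight $q^{|\mu'|+\text{shift}}$), while the lemma expresses the signed sum over all path systems as $\det\left(\mathrm{wt}(a_i\rightarrow b_j)\right)$; since every non-intersecting system here has $\rho=\mathrm{id}$ and positive sign, and intersecting systems cancel in pairs, the determinant equals the weighted count of sub-diagrams times $q^D$. Dividing by $q^D$ yields $\mathrm{Det}(\mathbf a\rightarrow\mathbf b)=\sum_{\mu'\subseteq\mu}q^{|\mu'|}=Y^{\uparrow}(\mu)$.

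The main obstacle I anticipate is purely bookkeeping rather than conceptual: pinning down the precise correspondence between a right step of the $i$-th boundary path and the column of boxes it "caps", and verifying that the $y$-coordinate shift contributes exactly $(i-1)$ per right step and nothing more, so that the accumulated error is $D$ and not something else. One must be careful about the convention that $\mu'$ can be empty (the path system then degenerates to the $m$ points $a_i$ with their trailing single down step, giving weight $q^0$) and about the boundary case $\mu_{m+1-i}=0$, where the $i$-th path is a single down step with no right steps and thus weight $1$; checking that the endpoints $b_i$ remain well defined and the paths remain non-intersecting in these degenerate cases is the finicky part. Everything else follows mechanically from the LGV lemma as stated.
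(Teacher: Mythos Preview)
Your proposal is correct and follows exactly the approach the paper intends: the paper itself gives no detailed argument and simply states that the proposition is a direct consequence of the Lindstr\"om--Gessel--Viennot lemma. Your write-up spells out the bijection and the weight bookkeeping (including the origin of the correction term $D$) that the paper leaves implicit, so there is nothing to add or change.
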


To generalize Eq. (\ref{eq:Ydet1}) to a Dyck tiling above $\lambda$, we 
first study Dyck tilings for $312$-avoiding decreasing labels.
Note that the choice of a decreasing label $L^{\downarrow}$ (which may not 
be $312$-avoiding) and $T$ uniquely fixes a Dyck tiling $D$. 
This Dyck tiling $D$ may contain a non-trivial Dyck tile in general.
However, the pattern avoidance in $L^{\downarrow}$ imposes a 
strong condition on a Dyck tiling as in the next proposition.

\begin{prop}
\label{prop:312trivial}
Suppose $L^{\downarrow}$ is a $312$-avoiding label of $T$. Then, the 
Dyck tiling $D$ characterized by $L^{\downarrow}$ and $T$ consists of 
only trivial Dyck tiles. Equivalently, $D$ contains no Dyck tiles of 
size $m>1$.
\end{prop}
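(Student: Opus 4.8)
The plan is to argue by contradiction, assuming that the Dyck tiling $D$ determined by a $312$-avoiding decreasing label $L^{\downarrow}$ on $T$ contains a non-trivial Dyck tile $d$ of size $m\ge 1$, and then to exhibit three edges of $T$ forming a forbidden $312$-pattern. The main tool is the Hermite history: recall that $D$ is the unique Dyck tiling whose Hermite history reproduces the Lehmer code $\mathbf{h}=(h_1,\ldots,h_n)$ of the post-order word of $L^{\downarrow}$, and that each line in the Hermite history associated to an up step $u$ picks up weight from the Dyck tiles it passes through, contributing to the entry $h_{n+1-i}$ for the $i$-th up step from the right. A non-trivial Dyck tile of size $m$ contributes weight $m+1\ge 2$ to whichever line passes through it, so its presence forces some $h_j\ge 2$; more importantly, because the tiling is cover-inclusive, a non-trivial tile $d$ must lie below (or be crossed by) a line associated to some up step, and the geometry of $d$ spanning several columns translates into an order relation among the labels of the corresponding edges.

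The key steps, in order, are: (i) translate "$D$ has a non-trivial tile" into a statement about the Hermite history, namely that some line $l$ associated to an up step $u_1$ passes through a tile $d$ of size $\ge 1$; (ii) identify the up step $u_2$ sitting directly above $u_1$ on the top path of $D$ with $c(u_2)=c(u_1)+(0,p)$ for the appropriate $p$, and use the horizontal extent of $d$ (which spans at least two "columns" of the underlying chord structure) to locate a third up step $u_3$ strictly between the columns over which $d$ lives; (iii) convert the three up steps $u_1,u_2,u_3$ into three edges $e_1,e_2,e_3$ of $T$ via the tree–Dyck-path correspondence, checking that the left–right ordering of the up steps and the nesting structure give $e_1$ strictly right to $e_2$ and $e_2$ strictly right to $e_3$ (or the mirror configuration); (iv) read off from the weights carried along the line $l$ — and from the fact that $d$ is a genuine tile, not a union of trivial tiles — the inequalities on the Lehmer code entries, hence on the $L^{\downarrow}$-labels, that produce exactly $L^{\downarrow}(e_2)<L^{\downarrow}(e_3)<L^{\downarrow}(e_1)$, contradicting the $312$-avoidance of $L^{\downarrow}$. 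The final sentence of the statement, that $D$ contains no Dyck tile of size $m>1$, is then immediate: any tile of size $\ge 1$ is already excluded, so in particular no tile of size $>1$.

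An alternative, possibly cleaner route is induction on $n$ using the DTS operation in tandem with Proposition \ref{prop:Lupdown}: a $312$-avoiding decreasing label corresponds under the bar operation to an increasing label, and inserting the edge with the largest label either creates only new trivial tiles (the "addition of trivial Dyck tiles" step) or enlarges an existing tile. One would show that the enlargement step, applied to a tiling that was trivial by the induction hypothesis, can only happen when the new edge together with two earlier edges witnesses a $312$-pattern; the pattern-avoidance hypothesis is inherited by the label obtained after deleting the largest-labeled edge, so the induction closes. Either way, the heart of the matter — and the step I expect to be the main obstacle — is step (iii)/(iv): making the correspondence between the planar geometry of a non-trivial Dyck tile (a ribbon straddling several chords of $\lambda(T)$) and the combinatorial $312$-configuration of three edges completely precise, i.e. verifying that "strictly right to" in the tree matches the column ordering of up steps and that the weight accumulated along the Hermite line forces the middle inequality $L^{\downarrow}(e_3)<L^{\downarrow}(e_1)$ rather than merely $L^{\downarrow}(e_3)\le L^{\downarrow}(e_1)$. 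The cover-inclusivity of $D$ is exactly what rules out the degenerate cases and should be invoked carefully there.
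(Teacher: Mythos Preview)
Your ``alternative, possibly cleaner route'' is exactly what the paper does, and it is the right choice: the paper translates via Proposition~\ref{prop:Lupdown} to the statement that $L^{\uparrow}$ is $132$-avoiding, and then observes that in the DTS construction the only way a non-trivial tile can appear is through the enlargement step (Figure~\ref{fig:insUD}), which is triggered precisely by a $132$-configuration among the already-inserted edges and the new one. The paper is terse here (it calls this ``obvious from the definition of the DTS operation''), so your more explicit inductive phrasing---the enlargement step, applied to a tiling that is trivial by induction, forces the forbidden pattern---is a genuine improvement in exposition, though you should say $132$ (for $L^{\uparrow}$) rather than $312$ once you have passed through the bar.

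Your main Hermite-history approach is a different route and could in principle be made to work, but as written it has a gap at the point you yourself identify. In step~(ii) you locate $u_2$ by $c(u_2)=c(u_1)+(0,p)$; this is a point on the boundary of a tile in the Hermite history, not an up step of the bottom path $\lambda$, so it does not directly correspond to an edge of $T$. To extract three edges $e_1,e_2,e_3$ of $T$ you would have to trace each of $u_1,u_2,u_3$ back to the up step of $\lambda$ whose Hermite line passes through it, check that those three up steps are genuinely distinct and in the correct left--right order, and then argue the label inequalities from the accumulated weights. None of this is carried out, and cover-inclusivity alone does not obviously deliver the strict middle inequality. The DTS route sidesteps all of this bookkeeping, which is why the paper prefers it.
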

%%%%%%%%%%
Before proceeding to the proof of Proposition \ref{prop:312trivial}, 
we give a remark.
\begin{remark}
The converse of the proposition is not  true. In some cases, a Dyck tiling 
characterized by $L^{\downarrow}$ and $T$ contains only trivial Dyck tiles 
even if $L^{\downarrow}$ is not $312$-avoiding.
For example, take $T=U^2D^2U^2D^2UD$ and $\omega(L^{\downarrow})=52413$ ($\omega$ is the 
pre-order word read from left to right).
We have the following Dyck tiling:
\begin{align*}
\tikzpic{-0.5}{[scale=0.4]
\draw[very thick](0,0)--(2,2)--(4,0)--(6,2)--(8,0)--(9,1)--(10,0);
\draw(3,1)--(4,2)--(5,1)(6,2)--(7,3)--(9,1)(7,1)--(8,2);
\draw(2,1)node{$2$}(2,0)node{$5$};\draw(6,1)node{$1$}(6,0)node{$4$};
\draw(9,0)node{$3$};
}
\end{align*}
The label $L^{\downarrow}$ contains a $312$-pattern, but its Dyck tiling does not 
contain non-trivial Dyck tiles.
\end{remark}

\begin{proof}[Proof of Proposition \ref{prop:312trivial}]
We first rephrase the statement in terms of $L^{\uparrow}$.
The label $L^{\downarrow}$ is $312$-avoiding is equivalent to the property 
that $L^{\uparrow}$ is $132$-avoiding.
Then, by the correspondence between $L^{\uparrow}$ and $L^{\downarrow}$ through 
the DTS operation and the Hermite history, it is enough to show that 
the Dyck tiling $D$ characterized by $L^{\uparrow}$ and $T$ does not contain non-trivial
Dyck tiles.
However, this is obvious from the definition of the DTS operation. Namely, 
if $L^{\uparrow}$ is $132$-avoiding, we do not have a non-trivial Dyck tile of size $m\ge1$.
This completes the proof.
\end{proof}

Let $\lambda$ be a Dyck path and fix a Dyck tiling $D$ above $\lambda$ consisting of only trivial 
Dyck tiles.
We denote by $L^{\uparrow}$ the increasing label corresponding to the Dyck tiling $D$ 
through the DTS operation.
Let $\mu:=(\mu_1,\ldots,\mu_n)$ be the number of trivial Dyck tiles in $D$ which 
are associated to the $i$-th up steps from top to bottom.
Let $u_{i}$ be the $i$-th up steps from the top in $\lambda$, and 
$x(u_{i})$ be its $x$-coordinate in $\mathbb{Z}^2$.
By definition of a Dyck path, we have $x(u_{i})\ge x(u_{i+1})$ for $1\le i\le n-1$.
We define $2n$ points $\{c_i,d_{i}: 1\le i\le n\}$ in $\mathbb{Z}^{2}$ 
as follows:
\begin{align*}
c_{1}&=(0,0), \\
c_{i+1}&=c_{i}+(1,1)+(x(u_{i})-x(u_{i+1}),0), \quad \forall i\in[1,n-1],\\
d_{i}&=c_{i}-(0,1)+(\mu_{i},0), \quad i\in[1,n].
\end{align*}
We consider right-down lattice paths from $c_{i}$ to $d_{j}$.
Then, we define the determinant $\mathrm{Det}(\mathbf{c}\rightarrow\mathbf{d})$
as in Eq. (\ref{eq:wtab}) by replacing $a_{i}$ and $b_j$ by $c_i$ and $d_{j}$ 
respectively. 
We define the determinant associated to $L^{\uparrow}$ and $T$ by 
\begin{align}
Y(L^{\uparrow},T):=\mathrm{Det}(\mathbf{c}\rightarrow\mathbf{d}).
\end{align}

Again by the Lindstr\"om--Gessel--Viennot formula, we have the following proposition.

\begin{prop}
\label{prop:YDyck}
Suppose that a Dyck tiling $D$ contains only trivial Dyck tiles, and 
$L^{\uparrow}$ is an increasing label for $D$.
The generating function $Y(L^{\uparrow},T)$ is nothing but 
the generating function for Dyck paths above $\lambda$ and below $\lambda'$ 
where $\lambda'$ is a top path of the Dyck tiling $D$.
\end{prop}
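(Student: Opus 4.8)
The plan is to deduce Proposition \ref{prop:YDyck} from the Lindstr\"om--Gessel--Viennot lemma, in exactly the way Proposition \ref{prop:Ydet} is obtained; the only genuinely new ingredient is that the sources $\mathbf{c}$ now encode the shape of $\lambda$ instead of that of $\lambda(n,m)$. First I would record two reductions. A trivial Dyck tiling above $\lambda$ is the same datum as its top Dyck path $\nu$ (necessarily satisfying $\lambda\le\nu$), and its weight equals the number $b(\lambda,\nu)$ of unit boxes lying between $\lambda$ and $\nu$; writing $\mu=(\mu_1,\dots,\mu_n)$ for the tile-count vector of $D$, so that $\lambda'$ is the top path of $D$, the assertion to prove becomes
\begin{align}
Y(L^{\uparrow},T)=\mathrm{Det}(\mathbf{c}\rightarrow\mathbf{d})=\sum_{\lambda\le\nu\le\lambda'}q^{b(\lambda,\nu)}.
\end{align}
The first equality is the definition of $Y(L^{\uparrow},T)$, and by the Lindstr\"om--Gessel--Viennot lemma the middle term equals $q^{-D}\sum_{\mathcal{P}}\mathrm{sign}(\rho(\mathcal{P}))\prod_{i=1}^{n}\mathrm{wt}(P_i)$, the sum being over $n$-tuples $\mathcal{P}=(P_1,\dots,P_n)$ of non-intersecting right-down paths from $\mathbf{c}$ to $\mathbf{d}$, with $D=\sum_{i=1}^{n}(i-1)\mu_i$.

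The heart of the argument is a weight-preserving bijection between such tuples $\mathcal{P}$ and Dyck paths $\nu$ with $\lambda\le\nu\le\lambda'$. Since $y(c_i)=i-1$ and $d_i=c_i-(0,1)+(\mu_i,0)$, a right-down path $P_i$ from $c_i$ to $d_i$ consists of exactly $\mu_i$ right steps and a single down step, hence is determined by the number $\mu'_i\in\{0,1,\dots,\mu_i\}$ of right steps taken before the down step; I would identify $\mu'_i$ with the number of boxes stacked on the $i$-th up step of $\lambda$ in the trivial tiling whose top path is $\nu$. A one-line computation gives $\mathrm{wt}(P_i)=q^{(i-1)\mu_i+\mu'_i}$, so that $q^{-D}\prod_{i=1}^{n}\mathrm{wt}(P_i)=q^{\mu'_1+\dots+\mu'_n}=q^{b(\lambda,\nu)}$. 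What must be checked is that the bounds $\mu'_i\le\mu_i$ together with the non-intersecting condition on $\mathcal{P}$ cut out exactly the Dyck paths $\nu$ with $\lambda\le\nu\le\lambda'$: the bound $\mu'_i\le\mu_i$ corresponds to $\nu\le\lambda'$, while, since $P_i$ meets only the horizontal lines $y=i-1$ and $y=i-2$, only consecutive paths $P_i$ and $P_{i+1}$ can collide, and they do so on the shared line $y=i-1$; the horizontal shift $x(u_i)-x(u_{i+1})$ built into $c_{i+1}$ is calibrated precisely so that $P_i$ and $P_{i+1}$ are disjoint if and only if the two stack heights $\mu'_i$ and $\mu'_{i+1}$ respect the Dyck condition of $\lambda$ between the up steps $u_i$ and $u_{i+1}$. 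This is the step that uses the geometry of $\lambda$, and it is the main point requiring care; everything else is bookkeeping mirroring the proof of Proposition \ref{prop:Ydet}.

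Finally I would rule out the sign. The points $c_1,\dots,c_n$ and $d_1,\dots,d_n$ are arranged so that every system of pairwise non-intersecting right-down paths among them must connect $c_i$ to $d_i$ for each $i$; hence the only permutation $\rho(\mathcal{P})$ occurring is the identity, and each surviving term contributes with sign $+1$. Combining the three steps gives $\mathrm{Det}(\mathbf{c}\rightarrow\mathbf{d})=\sum_{\lambda\le\nu\le\lambda'}q^{b(\lambda,\nu)}$, which is precisely the generating function for Dyck paths above $\lambda$ and below $\lambda'$. The main obstacle is the equivalence between the non-intersecting condition and the Dyck-path validity of $\nu$; once this is established, the determinant collapses to the claimed positive generating function by the routine weight and sign computations above.
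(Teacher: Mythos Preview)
Your proposal is correct and follows exactly the approach the paper intends: the paper's entire proof is the single line ``Again by the Lindstr\"om--Gessel--Viennot formula, we have the following proposition,'' and you have supplied the bijection, the weight computation, and the non-intersection/sign analysis that this line stands for. Your identification of $D=\sum_{i}(i-1)\mu_i$, of $\mathrm{wt}(P_i)=q^{(i-1)\mu_i+\mu'_i}$, and of the condition $\mu'_i-\mu'_{i+1}\le x(u_i)-x(u_{i+1})$ coming from vertex-disjointness on the line $y=i-1$ are all the right ingredients; the argument is the same as Proposition~\ref{prop:Ydet} with the sources shifted by $x(u_i)-x(u_{i+1})$ to record the shape of~$\lambda$.
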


\section{Generating functions for labeled trees}
\label{sec:gfLT}
\subsection{Recursive formula}
In this subsection, we derive a recursive formula for a generating 
function $Z(L^{\downarrow},T)$.

Let $L^{\downarrow}$ be a decreasing label of the tree $T$.
Let $l_1$ be the edge with label $1$ and $l_2,\ldots,l_{m}$ be 
the leaves of the tree $T$ which are right to $l_1$.
We assume that $l_{i+1}$ is right to $l_{i}$.
Note that $l_1$ is a leaf in $T$ since we consider a decreasing label.
Denote by $T_{i}$ the tree obtained from $T$ by deleting the leaf $l_{i}$
for $1\le i\le m$.
Let $L_{i}^{\downarrow}$ be the label of $T$ such that 
\begin{enumerate}
\item the leaf $l_{i}$ has the label $1$, and
\item the label $L^{\downarrow}_{i}$ is characterized by the unrefinable 
sequence of labels 
\begin{align*}
L^{\downarrow}\lessdot L_1\lessdot\ldots\lessdot L_{p}=L^{\downarrow}_{i},
\end{align*}
where $p$ is minimal. We write $p(L_{i}^{\downarrow}):=p$.
\end{enumerate}
We define $\widetilde{L^{\downarrow}_{i}}$ as the label of $T_{i}$ obtained 
from $L_{i}^{\downarrow}$ by deleting the leaf $l_{i}$ and 
all the labels are decreased by one.
The definition of $\widetilde{L^{\downarrow}_{i}}$ is well-defined since 
we consider a decreasing label and $L_{i}^{\downarrow}$ has the integer $1$
on the leaf $l_i$.

\begin{theorem}
Let $\widetilde{L^{\downarrow}_{i}}, 1\le i\le m$, be labels as above.
Then, we have
\begin{align}
\label{eq:Zsum}
Z(L^{\downarrow},T)=\sum_{i=1}^{m}q^{p(L_{i}^{\downarrow})}\cdot Z(\widetilde{L_{i}^{\downarrow}},T_{i}).
\end{align}
\end{theorem}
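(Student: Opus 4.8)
The plan is to establish the identity (\ref{eq:Zsum}) by partitioning the set of labeled trees appearing in the generating function $Z(L^{\downarrow},T)$ according to which leaf of $T$ carries the label $1$. Recall that $Z(L^{\downarrow},T)=\sum_{L^{\downarrow}\le L'}q^{\mathrm{inv}(\omega(L'))}$, where the sum is over all decreasing labels $L'$ of $T$ with $L^{\downarrow}\le L'$. The key structural observation is that in a decreasing label, the edge carrying label $1$ must be a leaf, and since covering moves (Definition \ref{def:coverL}) only transpose labels on pairs of edges with $e_l$ strictly left to $e_r$, as we move up in the poset the label $1$ can only migrate rightward, from the leaf $l_1$ to one of the leaves $l_2,\dots,l_m$ lying to its right. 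So the poset interval $[L^{\downarrow},\cdot)$ decomposes as a disjoint union $\bigsqcup_{i=1}^{m}\mathcal{S}_i$, where $\mathcal{S}_i$ consists of those $L'\ge L^{\downarrow}$ in which leaf $l_i$ bears the label $1$.

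First I would verify this disjointness and exhaustiveness carefully, using condition (4) in Definition \ref{def:coverL} to control which intermediate leaves the label $1$ may pass through: once $1$ sits on leaf $l_i$, the labels strictly between the $l_i$-chain and its right neighbors must be checked to ensure no further structure is lost. Next, for each fixed $i$, I would show that $\mathcal{S}_i$ is in bijection with $\{L'' : \widetilde{L_i^{\downarrow}}\le L''\}$, the index set of $Z(\widetilde{L_i^{\downarrow}},T_i)$: the bijection sends $L'$ to the label $\widetilde{L'}$ on $T_i$ obtained by deleting the leaf $l_i$ (which carries $1$) and decrementing every remaining label by one. I would argue that $L_i^{\downarrow}$ is precisely the minimal element of $\mathcal{S}_i$ — this is exactly how $L_i^{\downarrow}$ was defined via a minimal-length unrefinable chain — and that the covering relation of Definition \ref{def:coverL} is preserved and reflected under deletion of the fixed leaf $l_i$ together with the shift, so that $L^{\downarrow}\le L'$ with $1$ on $l_i$ is equivalent to $\widetilde{L_i^{\downarrow}}\le\widetilde{L'}$.

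The remaining point is the bookkeeping of statistics. I would show that for $L'\in\mathcal{S}_i$ one has $\mathrm{inv}(\omega(L'))=p(L_i^{\downarrow})+\mathrm{inv}(\omega(\widetilde{L'}))$. The summand $p(L_i^{\downarrow})$ is the number of covering steps needed to bring $1$ from $l_1$ to $l_i$, which should equal $\mathrm{inv}(\omega(L_i^{\downarrow}))$ since $\omega(L^{\downarrow})$ has a $1$ in the leftmost post-order position contributing nothing, and each transposition in the chain $L^{\downarrow}\lessdot\dots\lessdot L_i^{\downarrow}$ changes the inversion number by exactly $1$ (this uses that consecutive post-order positions are swapped, by the no-$e_c$ condition). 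Deleting the leaf carrying $1$ and decrementing removes exactly the contribution of that entry to $\mathrm{inv}$, leaving $\mathrm{inv}(\omega(\widetilde{L'}))$; combined with the fact that moving $1$ rightward past a block contributes a fixed amount independent of the configuration above it, one gets the clean splitting. Summing over $i$ and over $L'\in\mathcal{S}_i$ then yields (\ref{eq:Zsum}).

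\textbf{Main obstacle.} I expect the delicate step to be the statistic splitting $\mathrm{inv}(\omega(L'))=p(L_i^{\downarrow})+\mathrm{inv}(\omega(\widetilde{L'}))$, together with the claim that $p(L_i^{\downarrow})$ depends only on $i$ and not on how $L'$ sits above $L_i^{\downarrow}$. This requires understanding how the post-order word changes under the covering moves and under leaf deletion, and in particular checking that the inversions involving the entry $1$ are exactly counted by $p(L_i^{\downarrow})$ regardless of the rearrangement of the larger labels — equivalently, that the minimal chain realizing the migration of $1$ to $l_i$ is unaffected by, and contributes additively to, subsequent moves among the other edges. The poset not being a lattice (as noted in the Remark) means one must be careful that $L_i^{\downarrow}$ is genuinely the unique minimum of $\mathcal{S}_i$ and that the interval above it embeds faithfully; handling condition (4) of Definition \ref{def:coverL} under the deletion is where most of the technical care will go.
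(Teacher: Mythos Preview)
Your approach is essentially the same as the paper's: partition the labels $L'\ge L^{\downarrow}$ according to which leaf carries the label $1$, set up a bijection with labels above $\widetilde{L_i^{\downarrow}}$ via deletion of that leaf and decrementing, and then track the inversion statistic through the bijection. The paper's own proof is in fact considerably terser than your outline---it simply asserts the bijection and the weight shift without the detailed verification you propose---so your identification of the statistic-splitting step as the main obstacle is well placed (and your side remark that $1$ sits in the leftmost post-order position need not hold for general $L^{\downarrow}$, so that part of your heuristic will need adjustment).
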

%%%%%%%%%%%%%
\begin{proof}
The generating function $Z(L^{\downarrow},T)$ 
counts the number of labels $L$ such that $L^{\downarrow}\le L$ with 
the weight given by $q^{\mathrm{inv}(\omega(L))}$.
By definition of the covering relation, $L$ has a label $1$ on one of 
the leaves $l_1,\ldots,l_m$.
We cannot move the label $1$ left to the leaf $l_1$ by the cover relation in Definition \ref{def:coverL}.
Thus, we have a natural bijection between $L$ and $\widetilde{L}$ such 
that $L^{\downarrow}\le L$ and $\widetilde{L_{i}^{\downarrow}}\le \widetilde{L}$
with $1\le i\le m$.
Recall that we delete the edge $l_i$ from $L_{i}^{\downarrow}$ to 
obtain $\widetilde{L_{i}^{\downarrow}}\le \widetilde{L}$.
This means that the difference of the weights between $L$ and $\widetilde{L_{i}^{\downarrow}}\le \widetilde{L}$
is given by $p(L_{i}^{\downarrow})$.
From these, we have Eq. (\ref{eq:Zsum}).
\end{proof}

Let $L^{\downarrow}$ be a decreasing label and $\widetilde{L_{i}^{\downarrow}}$, $L^{\downarrow}_{i}$, 
$1\le i\le m$, be also the decreasing labels obtained from $L^{\downarrow}$ as above.
\begin{prop}
Let $L^{\downarrow}$ be a $312$-avoiding label.
Then, the labels $\widetilde{L_{i}^{\downarrow}}$, $1\le i\le m$, are 
$312$-avoiding.
\end{prop}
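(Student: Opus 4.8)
The plan is to argue by contradiction: suppose some $\widetilde{L_i^{\downarrow}}$ contains a $312$-pattern, and trace that pattern back to a $312$-pattern in the original label $L^{\downarrow}$, contradicting the hypothesis. First I would recall the precise relationship between $\widetilde{L_i^{\downarrow}}$ on $T_i$ and $L_i^{\downarrow}$ on $T$: the tree $T_i$ is obtained by deleting the leaf $l_i$ (which carries label $1$ in $L_i^{\downarrow}$), and every remaining label is decreased by one. Deleting an edge and relabeling by a monotone shift does not create new inversions among the surviving edges, nor does it change the relative left/right (strictly-right) relations among edges that persist in $T_i$. Hence a triple of edges $e_1, e_2, e_3$ in $T_i$ realizing $L(e_2) < L(e_3) < L(e_1)$ with $e_i$ strictly right to $e_{i+1}$ lifts to the same triple of edges in $T$ (now carrying labels one larger) with the same strictly-right relations and the same order relation among the labels of $L_i^{\downarrow}$. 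So it suffices to show $L_i^{\downarrow}$ is $312$-avoiding.

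The second step is to show that $L_i^{\downarrow}$ — obtained from $L^{\downarrow}$ by a minimal unrefinable chain of covers moving the label $1$ from the leaf $l_1$ to the leaf $l_i$ — inherits $312$-avoidance from $L^{\downarrow}$. I would analyze a single cover $L \lessdot L'$ in this chain. By Definition~\ref{def:coverL}, $L'$ is obtained from $L$ by transposing the labels of two edges $e_l \lessdot_{\mathrm{left}} e_r$ with $L(e_l) < L(e_r)$, subject to the "no intervening edge $e_c$" condition~(4). In the chain defining $L_i^{\downarrow}$, the label $1$ always sits on the left edge of the transposed pair (it is being pushed rightward), so $L(e_l) = 1$, and condition~(4) says there is no edge strictly between $e_l$ and $e_r$ whose label lies in $[1, L(e_r)]$. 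I would show that if $L$ is $312$-avoiding then $L'$ is too: any purported $312$-pattern in $L'$ must involve at least one of $e_l, e_r$ (otherwise it is already a pattern in $L$), and then a short case analysis on which role ($e_1$, $e_2$, or $e_3$) each of $e_l, e_r$ plays — using that their labels are $1$ and $L(e_r)$, that $1$ is the global minimum so it can only play the role of $e_2$, and that condition~(4) forbids any third edge sitting strictly between them with an intermediate label — rules out every case. Iterating along the chain gives that $L_i^{\downarrow}$ is $312$-avoiding.

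The main obstacle I anticipate is the case analysis in the second step: one must carefully use the geometry encoded by the "strictly right" relation (transitivity, and compatibility of these relations with the transposition, since the underlying tree $T$ is unchanged) together with condition~(4) of the cover relation, and check that no rearranged triple $\{e_1, e_2, e_3\}$ can newly satisfy $L'(e_2) < L'(e_3) < L'(e_1)$. A secondary subtlety is making sure the passage from $L_i^{\downarrow}$ on $T$ to $\widetilde{L_i^{\downarrow}}$ on $T_i$ is genuinely harmless: one must check that deleting the leaf $l_i$ cannot destroy a strictly-right relation among the other edges in a way that would make a non-pattern become a pattern — but since deletion of a leaf only removes $l_i$ from the sequences $p(e)$ and can only shrink the intersections $p_\cap(e,e')$ in a controlled way, and since no new labels and no new edges appear, this direction is routine. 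Assembling these, together with the earlier Proposition~\ref{prop:312trivial} if one wants the Dyck-tiling interpretation, completes the argument.
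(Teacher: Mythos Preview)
Your second step is where the argument breaks: it is \emph{not} true that a single cover in the minimal chain (even one with $L(e_l)=1$) preserves $312$-avoidance. Take $T=(UD)^4$, i.e.\ four leaves $e_1,e_2,e_3,e_4$ attached to the root, with the decreasing label $L^{\downarrow}=(1,4,2,3)$; one checks directly that this is $312$-avoiding. The leaf $l_1=e_1$ carries label~$1$, and to form $L_3^{\downarrow}$ we swap $e_l=e_1$ and $e_r=e_3$ (labels $1$ and $2$); condition~(4) of Definition~\ref{def:coverL} is satisfied since the only intermediate edge $e_2$ has label $4\notin[1,2]$. The result is $L'=(2,4,1,3)$, and now the triple $(a_1,a_2,a_3)=(e_4,e_3,e_1)$ has labels $(3,1,2)$ satisfying $L'(a_2)<L'(a_3)<L'(a_1)$ --- a $312$-pattern. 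Your case analysis misses this because the third edge $e_4$ lies \emph{outside} the interval between $e_l$ and $e_r$, so condition~(4) says nothing about it; the claim ``condition~(4) forbids any third edge \ldots'' only covers edges strictly between $e_l$ and $e_r$.

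The paper's proof takes the opposite tack: it explicitly allows $L_i^{\downarrow}$ to contain $312$-patterns, but argues (via a concrete description of $L_i^{\downarrow}$ as ``swap the labels of $l_1$ and $l_i$, then reorder along $p(l_1)$'') that any such pattern must involve the edge $l_i$ carrying label~$1$. Since $l_i$ is precisely the edge deleted in passing to $\widetilde{L_i^{\downarrow}}$, these patterns disappear. In the example above the offending triple indeed contains $l_3=e_3$, and after deleting it one gets $(1,3,2)$ on three leaves, which is $312$-avoiding. So your first step (reducing from $\widetilde{L_i^{\downarrow}}$ to $L_i^{\downarrow}$) is fine in one direction, but you cannot strengthen the target to ``$L_i^{\downarrow}$ is $312$-avoiding''; you must instead localize the possible $312$-patterns to the deleted leaf, as the paper does.
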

%%%%%%%%%%%
\begin{proof}
The leaf $l_1$ has the label $1$ by definition of $L^{\downarrow}$, and 
the labels of the leaves $l_{i}, 2\le i\le m$ satisfy $L^{\downarrow}(l_i)\ge2$.
We first assume that 
\begin{enumerate}
\item[($*$)] 
$L^{\downarrow}(l_{i'})>L^{\downarrow}(l_{i})$ for $2\le i'\le i-1$. 
\end{enumerate}
When we construct $L^{\downarrow}_{i}$ from $L^{\downarrow}$, we exchange 
the labels of $l_1$ and $l_{i}$, namely, $L^{\downarrow}_{i}(l_{1})=L^{\downarrow}(l_{i})$
and $L^{\downarrow}_{i}(l_{i})=L^{\downarrow}(l_{1})=1$.
This exchange of the labels may produce a label $L'_{i}$  which is in general neither increasing 
nor decreasing. We construct the decreasing label $L^{\downarrow}_{i}$ from $L'_{i}$ by reordering 
the integers in the following way.

Consider the sequence of edges $p(l_1)$ from $l_1$ to the root in $T$. 
Let $l'_1$ be the parent edge of the leaf edge $l_1$. If the label of $l'_1$ is larger
than that of $l_1$, then we have a decreasing label. In this case, we are done.
Suppose that the label of $l'_1$ is smaller than that of $l_1$.
We relabel the edges in $p(l_1)$ such that the labels are decreasing from the root 
to the leaf $l_1$.
Then, it is obvious that the new label is admissible as a deceasing label.
In this way, we obtain the decreasing label $L^{\downarrow}_{i}$ from $L'_{i}$.

Recall that $L^{\downarrow}$ is $312$-avoiding and we delete $l_i$ (whose label is one)
from $L_{i}^{\downarrow}$ to obtain $\widetilde{L_{i}^{\downarrow}}$.
The label $L^{\downarrow}_{i}$ may not be $312$-avoiding. 
However, we claim that the above construction 
of $L'_{i}$ implies that a $312$-pattern in $L_{i}^{\downarrow}$ has to contain 
the label $1$ on the edge $l_i$.
Note that the difference of the label between $L^{\downarrow}$ and $L^{\downarrow}_{i}$
is the labels of the edges in $p(l_1)$ and that of $l_i$.
Since $L^{\downarrow}$ is $312$-avoiding, if $L^{\downarrow}_{i}$ has a pattern $312$,
then this pattern contains the labels of the edges in $p(l_1)$ or that of $l_i$.
Recall that we reorder the labels of the edges in $p(l_1)$ to obtain $L^{\downarrow}_{i}$.
This operation moves the label downward at most one edge.
Therefore, if $L^{\downarrow}_{i}$ has a $312$-pattern, this pattern contains the label of the 
edge $l_1$ or $l_{i}$. 
First, suppose that $L^{\downarrow}_{i}$ has a pattern $312$ which contains
the label $l_1$. 
We consider the following three cases:
\begin{enumerate}
\item
Let $e_{l}$, $e_{m}$ be the edges such that $e_{l}$ is strictly left to $e_{m}$, and $e_{m}$
is strictly left to $l_1$.
Suppose we have a $312$-pattern on these three edges, {\it i.e.}, 
$L_{i}^{\downarrow}(e_{m})<L_{i}^{\downarrow}(e_{l})<L_{i}^{\downarrow}(l_1)$.
since $e_{l}, e_{m}\notin p(l_1)$, the labels on these edges in $L^{\downarrow}$ are 
the same as $L_{i}^{\downarrow}$. Therefore, this means that we have a $312$-pattern 
which consists of $e_{l}$, $e_{m}$ and $l_i$ in $L^{\downarrow}$. However, this contradicts 
the fact that $L^{\downarrow}$ is $312$-avoiding. 
\item  Let $e_{l}$, $e_{r}$ be the edges such that $e_{l}$ is strictly left to $l_1$, and 
$e_{r}$ is strictly right to $l_1$.
Suppose we have a $312$-pattern on these three edges in $L^{\downarrow}_{i}$. 
The label of $e_r$ is larger than that of $l_1$ in $L^{\downarrow}$, which implies that 
we have a $312$-pattern in $L^{\downarrow}$. This is a contradiction.
\item 
Let $e_{m}$, $e_{r}$ be the edges such that $e_{m}$ is strictly right to $l_1$, and $e_{r}$ is
strictly right to $e_{m}$. Suppose we have a $312$-pattern on these three edges $l_1$, $e_{m}$
and $e_{r}$.  
By the condition ($*$), $e_{m}$ is right to $l_{i}$ since the label of $e_{m}$ is smaller than that
of $l_1$ in $L^{\downarrow}_{i}$. This means that we have a $312$-pattern on the edges 
$l_{i}$, $e_{m}$, and $e_{r}$ in $L^{\downarrow}$. This is a contradiction.
\end{enumerate}
Secondly, we consider the case where $L^{\downarrow}_{i}$ has a pattern $312$ which contains 
the label $l_{i}$.
Since we delete $l_i$ to obtain $\widetilde{L^{\downarrow}_{i}}$, this $312$-pattern 
disappears  in $\widetilde{L^{\downarrow}_{i}}$.
Together with the fact that $L^{\downarrow}$ is $312$-avoiding, 
the label $\widetilde{L_{i}^{\downarrow}}$ is also $312$-avoiding.

Instead of ($*$), we consider the case where there exists $i'_1<i'_{2}<\ldots<i'_{k}$ 
such that 
$1<L^{\downarrow}(l_{i'_{1}})<L^{\downarrow}(l_{i'_{2}})<\ldots<L^{\downarrow}(l_{i'_{k}})<L^{\downarrow}(l_{i})$.
Then, we first construct $L^{\downarrow}_{i'_1}$ as in the first case.
Then, we construct $L^{\downarrow}_{i'_2}$ from $L^{\downarrow}_{i'_1}$.
More in general, we construct $L^{\downarrow}_{i'_{p+1}}$ from $L^{\downarrow}_{i'_p}$ by 
exchanging the labels of $l_{i'_{p}}$ and $l_{i'_{p+1}}$ in $L^{\downarrow}_{i'_p}$, 
and by reordering the labels of edges to have a decreasing label.
By a similar argument to the first case, if $L_{i}^{\downarrow}$ contains a $312$-pattern,
then this $312$-pattern has to contain the label $1$ on the edge $l_{i}$.
Again by the same argument as the first case, $\widetilde{L_{i}^{\downarrow}}$
is $312$-avoiding, which completes the proof.
\end{proof}

Let $n$ be a branch point in $T$ and $L$ be a label on $T$.
We transform $T$ into another tree $T'$ by transforming the 
subtree below $n$ into a subtree such that its root is $n$ and 
it has a unique leaf.
Since $L$ is an increasing or decreasing label, we have a 
unique label $L'$ which is increasing or decreasing.
Note that the labels on the edges which are not below $n$ 
remain the same before and after the transformation of the tree.

\begin{lemma}
\label{lemma:312avoiding}
Let $L$ and $L'$ be labels on the tree $T$ and $T'$ respectively.
Then, if $L$ is $312$-avoiding, then $L'$ is also $312$-avoiding.
\end{lemma}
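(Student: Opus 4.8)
The plan is to reduce the transformation of the subtree below the branch point $n$ into a single "caterpillar" branch (a path with one leaf) to a finite composition of elementary moves, and to show that no single elementary move can create a $312$-pattern if none was present before. Since the edges not below $n$ keep their labels, and since being strictly left/right is a property only of the tree shape (not of the labeling), it suffices to track how a potential $312$-pattern using edges below $n$ could arise. First I would observe that the transformation from $T$ to $T'$ can be realized step by step: repeatedly take the deepest branch point $n'$ in the subtree below $n$, and "merge" its two (or more) subtrees into one linear chain in a way that respects the increasing/decreasing order dictated by $L$; each such step strictly decreases the number of branch points below $n$, so finitely many steps produce $T'$. So it is enough to prove the claim when $T'$ is obtained from $T$ by a single such local merge at a branch point $n'$.

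The key step is then the following combinatorial observation. Fix the branch point $n'$ with subtrees $S_1, S_2, \dots$ (ordered left to right) hanging below it, and let $L'$ be the unique increasing (resp. decreasing) relabeling of the merged chain. The set of edges below $n'$ is unchanged as a set, and within the new chain their left-to-right order in $T'$ is a total order refining the old partial order; crucially, every edge of $S_j$ is strictly right to every edge of $S_i$ when $i<j$ in $T$ as well (they lie in different subtrees of the branch point $n'$), so the "strictly right to" relation among edges below $n'$ only \emph{loses} comparabilities — no pair that was incomparable becomes comparable in a way that reverses order, and no pair that was strictly-left becomes strictly-right. Since a $312$-pattern in $L'$ requires three edges $e_1, e_2, e_3$ with $e_1$ strictly right to $e_2$ strictly right to $e_3$ and $L'(e_2)<L'(e_3)<L'(e_1)$, I would argue that each such triple in $T'$ (at least the ones involving the rearranged edges) pulls back to a triple in $T$ that is still a chain of "strictly right to" relations, and whose $L$-values are in the same relative order — because the relabeling $L \to L'$ is order-preserving along every root-to-leaf path and the merge is compatible with the increasing/decreasing constraint. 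Hence a $312$-pattern in $L'$ would already have been a $312$-pattern in $L$, contradiction.

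The main obstacle I expect is making precise the claim that the relabeling preserves the relative order of $L$-values on exactly those triples that matter. Concretely: when two subtrees $S_i, S_j$ are merged into one chain, an edge $e \in S_i$ and an edge $f\in S_j$ that were \emph{incomparable} in $T$ (neither strictly left nor strictly right, because they were in different subtrees but $p_\cap$ issues make them "weakly" related) become \emph{comparable} in $T'$, and one must check that their new $L'$-values cannot be arranged to sit as the "$1$" and "$2$" (or "$1$" and "$3$") of a fresh $312$-pattern together with some third edge. I would handle this by using that in a decreasing (resp. increasing) label, the edges along any single root-to-leaf chain are automatically monotone, so once $e$ and $f$ end up on a common chain in $T'$ their $L'$-values are forced to be monotone in the chain order — which kills the possibility of them being the decreasing pair $L'(e_2)<L'(e_1)$ with $e_1$ strictly right to $e_2$ unless that monotonicity was already "virtually" present via a common descendant leaf in $T$. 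Carefully bookkeeping this — perhaps by choosing, for each edge below $n'$, a witnessing leaf in $T$ and comparing the post-order (or pre-order) words before and after — is the technical heart, but it is routine once the right invariant ("the merge only totally orders previously-incomparable pairs, and does so consistently with $L$") is isolated. I would also double-check the base case where the subtree below $n$ is already a single chain, in which $T'=T$ and there is nothing to prove.
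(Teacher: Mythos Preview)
Your proposal circles around the right idea but never states the one clean structural fact that makes the lemma immediate, and your ``main obstacle'' paragraph is built on a confusion that contradicts something you said correctly two sentences earlier.

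The key point is purely about the tree shape, not about labels: in $T'$ the edges below $n$ form a single root-to-leaf chain, and two edges lying on a common root-to-leaf path are \emph{never} strictly left or strictly right of one another (one of $p(e)\setminus p_\cap(e,e')$, $p(e')\setminus p_\cap(e,e')$ is empty). Consequently any $312$-pattern in $L'$ can use \emph{at most one} edge below $n$. This dissolves your ``obstacle'' completely: you never have to compare the $L'$-values of two merged edges or track how the relabeling permutes them. Once you see this, the proof is two lines. If the pattern uses no edge below $n$, those three edges and their labels are unchanged from $L$, so the pattern was already in $L$. If it uses exactly one edge below $n$, with label $\ell$, observe that (i) the multiset of labels appearing below $n$ is the same in $L$ and $L'$, and (ii) an edge not below $n$ is either on the path from $n$ to the root (hence only weakly related to every edge below $n$) or is uniformly strictly left, or uniformly strictly right, of every edge below $n$ --- and this holds identically in $T$ and $T'$. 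So replacing the single edge below $n$ by the edge of $T$ below $n$ carrying label $\ell$ produces the same $312$-pattern in $L$.

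Your obstacle paragraph also contains a contradiction: you first (correctly) say that edges in distinct subtrees $S_i,S_j$ of the branch point are strictly left/right of one another in $T$, and then immediately call such a pair ``incomparable''. They are comparable; what changes after the merge is that they become \emph{weakly} related, not that they gain a new strict relation. Finally, the reduction to elementary merges is harmless but unnecessary: the argument above handles the full collapse to a chain in one step, which is exactly the paper's (terse) proof.
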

%%%%%%%%%%%%
\begin{proof}
Transformation of $T$ into $T'$ reduces the number of leaves.
Further, the labels on the edges below $n$ in $T'$ are decreasing 
from $n$ to the unique leaf.
Then, it is easy to see that $L$ is $312$-avoiding implies 
$L'$ is also $312$-avoiding.
\end{proof}

Before proceeding to the main theorem, we introduce a condition on a decreasing label
which plays a central role in its proof.
We consider the following condition on a decreasing label $L^{\downarrow}$:
\begin{enumerate}
\item[($**$)]
Let $e$ and $e'$ be the edges of $T$ such that they have the common parent node (possibly the root)
and $e$ is right to $e'$.
Then, we consider the following condition on the labels of $e$ and $e'$:
\begin{align*}
L^{\downarrow}(e')<L^{\downarrow}(e).
\end{align*}
\end{enumerate}

Let $\lambda$ be a Dyck path of size $n$. We divide the region above $\lambda$ and below 
$U^nD^n$ into several rectangles.
Take a branch point $b$ of a tree such that the subtree $T$, whose root is $b$, has no branch 
point except its root.
This means that the Dyck path $\lambda'$ corresponding to $T$ is written as 
a concatenation of smaller Dyck paths $\lambda'=\lambda'_1\circ\lambda'_2\circ\ldots\circ\lambda'_{k}$. 
Here, $\lambda'_{i}$ is written as $U^{m_{i}}D^{m_{i}}$ for some positive $m_{i}$.
We construct $k-1$ Dyck paths $\nu_j$, $1\le j\le k-1$, above $\lambda'$ as follows.
The Dyck path $\nu_{j}$ is written as 
\begin{align*}
U^{m_1+\ldots+m_{j+1}}D^{m_1+\ldots+m_{j+1}}\circ\lambda'_{j+2}\circ\ldots\circ\lambda'_{k}.
\end{align*}
Note that $\nu_{k}$ is written as $U^{N}D^{N}$ with $N=\sum_{j=1}^{k}m_{j}$.
We denote by $R_{j+1}$ the region above $\nu_{j}$ and below $\nu_{j+1}$ with $\nu_0=\lambda'$. 
By its construction, the region $R_{j}$ is a rectangle.

We replace $\lambda'$ by $\nu_{k}$ in $\lambda$. The number of branch points are decreased by one.
We perform this process on the new Dyck path until we visit all the branch points. 
It is clear that we have several rectangles above $\lambda$ and below $U^nD^n$.
%%%%%%%%%%%%%%%%%%%%%%
\begin{figure}[ht]
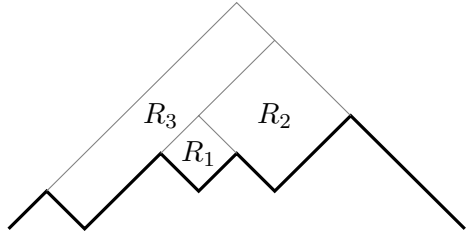

\begin{align*}
\tikzpic{-0.5}{[scale=0.5]
\draw[very thick](0,0)--(1,1)--(2,0)--(4,2)--(5,1)--(6,2)--(7,1)--(9,3)--(12,0);
\draw[gray](4,2)--(5,3)--(6,2)(1,1)--(6,6)--(9,3)(5,3)--(7,5);
\draw(5,2)node{$R_1$}(7,3)node{$R_2$}(4,3)node{$R_3$};
}
\end{align*}
\caption{Division of the region above a Dyck path into rectangles}
\label{fig:defR}
\end{figure}
%%%%%%%%%%%%%%%%%%%%%
Figure \ref{fig:defR} is an example of the division of the region 
above $UDU^2DUDU^2D^3$ and below $U^6D^6$ into smaller rectangles 
$R_1$, $R_2$ and $R_{3}$.

Let $R_{i}$, $1\le i\le p$, be the rectangles above the Dyck path $\lambda$ obtained as above.
Recall that a decreasing label gives a Dyck tiling $D$. Suppose that the tiling $D$ has 
no non-trivial Dyck tiles. 
Then, we have a Young diagram $\mu_{i}$, $1\le i\le p$, in the region $R_{i}$.
The generating function can be factorized as follows.

\begin{theorem}
\label{thrm:GFLT312}
Let $L^{\downarrow}_{0}$ be a $312$-avoiding label which satisfies the condition ($**$).
Let $\mu$ be the top path of the Dyck tiling given by $L^{\downarrow}_{0}$.
Then, we have  
\begin{align}
\label{eq:ZinY}
Z(L^{\downarrow}_{0},T)=Z(\lambda,\mu)=\prod_{i=1}^{p}Y^{\downarrow}(\mu_{i}),
\end{align}
where $Y^{\downarrow}(\mu)$ is defined in Eq. (\ref{eq:defYmu}).
\end{theorem}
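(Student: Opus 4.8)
The plan is to treat the two equalities in the statement separately, proving the first directly from the Dyck-tiling formalism and the second by induction on the number $n$ of edges of $T$ via the recursive formula \eqref{eq:Zsum}.

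For $Z(L^{\downarrow}_{0},T)=Z(\lambda,\mu)$: since $L^{\downarrow}_0$ is $312$-avoiding, Proposition \ref{prop:312trivial} forces the Dyck tiling it determines to consist only of trivial tiles, so its top path $\mu$ is an honest Dyck path above $\lambda=\lambda(T)$. Under the Hermite-history correspondence of Section \ref{sec:DT}, which is a bijection between decreasing labels of $T$ and Dyck tilings above $\lambda$ and which carries $\mathrm{inv}(\omega(L'))$ to the total tile weight, the up-set $\{L':L^{\downarrow}_0\le L'\}$ is carried onto the set of cover-inclusive Dyck tilings lying in the region between $\lambda$ and $\mu$; summing $q^{\mathrm{wt}}$ over the latter is exactly $\sum_{\lambda\le\nu\le\mu}\mathtt{Dyck}(\lambda,\nu)=Z(\lambda,\mu)$. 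The point to pin down here is that a covering move on decreasing labels corresponds to an elementary change of the tiling that keeps it inside this region, so that the poset interval matches the family of sub-tilings.

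For $Z(\lambda,\mu)=\prod_{i}Y^{\downarrow}(\mu_i)$: the base case $n=1$ is immediate. For the inductive step, let $l_1$ be the edge carrying label $1$ (a leaf, since $L^{\downarrow}_0$ is decreasing) and $l_2,\dots,l_m$ the leaves to its right, and apply \eqref{eq:Zsum}. By the proposition above this theorem each $\widetilde{L_i^{\downarrow}}$ is again $312$-avoiding, and from its construction — exchanging leaf labels and reordering along $p(l_1)$, with Lemma \ref{lemma:312avoiding} used to disregard the branching below a branch point — one checks that it still satisfies condition $(**)$, so the induction hypothesis applies to each pair $(\widetilde{L_i^{\downarrow}},T_i)$. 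It then remains to verify the identity
\begin{align*}
\sum_{i=1}^{m}q^{p(L_{i}^{\downarrow})}\prod_{k}Y^{\downarrow}\bigl(\mu_k^{(i)}\bigr)=\prod_{i=1}^{p}Y^{\downarrow}(\mu_{i}),
\end{align*}
where the $\mu_k^{(i)}$ are the diagrams attached to the rectangles of $T_i$ and the $\mu_i$ those of $T$. Passing from $T$ to $T_i$ deletes one leaf: if it is an only child a branch point disappears and two rectangles of $T$ merge, and otherwise the rectangle containing $l_i$ loses one unit of height. In each case I would identify precisely which $\mu_i$ is altered and how, reducing the identity to the Pascal-type recursion for the lattice-path count of Proposition \ref{prop:Ydet} (equivalently, peeling a row or column off a Young diagram), the exponents $p(L_i^{\downarrow})$ being exactly the co-size normalization built into $Y^{\downarrow}$.

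I expect the bookkeeping in the last step to be the main obstacle: tracking how the rectangle decomposition and the diagrams $\mu_i$ transform under a single leaf deletion, and verifying that condition $(**)$ is inherited by each $\widetilde{L_i^{\downarrow}}$. The first equality and the within-rectangle count (Propositions \ref{prop:YDyck} and \ref{prop:Ydet}) should be routine given the correspondences of Sections \ref{sec:DT}--\ref{sec:LGV}.
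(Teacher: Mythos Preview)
Your approach is genuinely different from the paper's. The paper does \emph{not} use the leaf-deletion recursion \eqref{eq:Zsum}; instead it runs an induction on branch points. It picks the left-most, down-most branch point $n$ whose subtree has no further branching, and collapses that subtree into a single chain (Lemma~\ref{lemma:312avoiding}). The condition $(**)$ is invoked precisely here: because the children of $n$ carry the smallest possible labels in left-to-right increasing order, every rearrangement of those labels lies in the up-set, and this produces the factors $Y^{\downarrow}(\mu_i)$ for the rectangles sitting over that branch point in one stroke. Iterating over branch points yields the full product directly. So the factorization is manifest step by step, with no Pascal-type identity to verify and no need to check that $(**)$ survives an auxiliary operation.

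Your route can in principle work, but two of the steps you label as bookkeeping are the heart of the matter. First, the inheritance of $(**)$ by $\widetilde{L_i^{\downarrow}}$ is not handled by Lemma~\ref{lemma:312avoiding} (that lemma concerns $312$-avoidance, not $(**)$); the reordering along $p(l_{i'})$ can raise the label on a path edge at an intermediate branch point, and one must argue that any resulting violation involves only the leaf about to be deleted. Second, the identity you write at the end is not a single Pascal step: deleting $l_i$ can change several rectangles simultaneously (merging two when a branch point disappears, while also shrinking another), so matching $\sum_i q^{p(L_i^{\downarrow})}\prod_k Y^{\downarrow}(\mu_k^{(i)})$ against $\prod_i Y^{\downarrow}(\mu_i)$ requires a careful combinatorial bijection, not just peeling a row. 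Finally, your treatment of the first equality $Z(L^{\downarrow}_0,T)=Z(\lambda,\mu)$ assumes that the Hermite-history bijection sends the order filter above $L^{\downarrow}_0$ exactly onto the tilings with top path $\le\mu$; the paper never proves this directly but obtains it as a consequence of the factorization, so if you want to claim it independently you would need a separate argument.
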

%%%%%%%%%
\begin{proof}
Since $L^{\downarrow}_{0}$ is $312$-avoiding, its subtrees 
are also $312$-avoiding.
As in Lemma \ref{lemma:312avoiding}, we transform the tree $T$ into 
$T'$ at the left-most and down-most branch point $n$ of $T$.
Then, by Lemma \ref{lemma:312avoiding}, we have a label $L'$ on 
the tree $T'$.
We compare the labels of $T$ and $T'$ below the branch point $n$. 
By this transformation, we have several rectangles $R_1,\ldots,R_{p}$
in the region surrounded by the Dyck paths corresponding to $T$ and $T'$.

Suppose that the branch point $n$ has $m$ child edges, and 
the these edges have integers $i_1,\ldots,i_{m}$. 
Then, the condition ($**$) implies that the root of the $m$ child edges 
can have any permutation of $i_1,\ldots, i_{m}$. 
Then, since the edges below $n$ in $T'$ have only a single leaf, 
the enumeration of the labeled trees in $T$ gives the generating function 
$\prod_{i=1}^{p}Y^{\downarrow}(\mu_i)$ where
$\mu_i$ is the Young diagram in the region $R_i$, $1\le i\le p$.

Then, we transform $T'$ into $T''$ at the left-most and down-most 
branch point $n'$ of $T'$.
By continuing this process, we have a sequence of trees 
\begin{align*}
T\rightarrow T'\rightarrow T''\rightarrow\ldots\rightarrow T^{(q)},
\end{align*}
where $T^{(q)}$ is a tree consisting of $n$ edges with a single leaf.
In each step, transformation gives the product of a generating function $Y^{\downarrow}(\mu_{i})$.
Then, the generating function is given by the product of generating functions
$Y^{\downarrow}(\mu_{i})$, $1\le i\le q'$ with some $q'$, which implies Eq. (\ref{eq:ZinY}).
\end{proof}

\begin{example}
Consider the following Dyck tiling corresponding to a decreasing label:
\begin{align*}
D=
\tikzpic{-0.5}{[scale=0.5]
\draw[very thick](0,0)--(2,2)--(4,0)--(6,2)--(7,1)--(8,2)--(10,0);
\draw(2,2)--(3,3)--(5,1)(3,1)--(6,4)--(8,2)(5,3)--(6,2)--(7,3);
\draw(0.5,0.5)node[anchor=north west]{$4$}(1.5,1.5)node[anchor=north west]{$1$}
(4.5,0.5)node[anchor=north west]{$5$}(5.5,1.5)node[anchor=north west]{$2$}
(7.5,1.5)node[anchor=north west]{$3$};
}
\end{align*}
The tiling $D$ satisfies the condition ($**$). 
The generating function is given by the product 
\begin{align*}
Z(D)=Y^{\downarrow}(\mu_1)Y^{\downarrow}(\mu_2)=(1+2q+2q^2+q^3+q^4)(1+q),
\end{align*}
where $\mu_1=(2,1,1)$ and $\mu_2=(1)$.
\end{example}

\begin{example}
Consider the following Dyck tiling corresponding to a decreasing label:
\begin{align*}
D=
\tikzpic{-0.5}{[scale=0.5]
\draw[very thick](0,0)--(2,2)--(4,0)--(6,2)--(7,1)--(8,2)--(10,0);
\draw(2,2)--(3,3)--(5,1)(3,1)--(6,4)--(8,2)(5,3)--(6,2)--(7,3);
\draw(3,3)--(4,4)--(5,3);
\draw(0.5,0.5)node[anchor=north west]{$2$}(1.5,1.5)node[anchor=north west]{$1$}
(4.5,0.5)node[anchor=north west]{$5$}(5.5,1.5)node[anchor=north west]{$4$}
(7.5,1.5)node[anchor=north west]{$3$};
}
\end{align*}
By a simple calculation, the generating function $Z(D)$ is given by
\begin{align*}
Z(D)=1+2q+4q^2+4q^3+3q^4+2q^5+q^6,
\end{align*}
which is not factorized any further since $D$ violates the condition ($**$).
\end{example}

Let $\lambda_{i}$ be the Dyck path of the form $\lambda_{i}=U^{m_i}D^{m_i}$ 
for some positive integer $m_i$.
Below, we consider a tree $T$ corresponding to the Dyck path
$\lambda=\lambda_1\circ\lambda_2\circ\ldots\circ\lambda_{p}$.
In other words, a tree $T$ has a unique branch point at the root of $T$.
In this case, we can relax the condition ($**$) on labels.

We divide the region above $\lambda$ and the top path into several rectangles 
$R_{1},\ldots, R_{p}$ as above (see the paragraphs above Theorem \ref{thrm:GFLT312}). 

\begin{prop}
Suppose that $\lambda$ is a concatenation of Dyck paths of the form $U^{m}D^{m}$ with $m\ge1$,  
$T$ is a tree corresponding to $\lambda$, and $L^{\downarrow}_{0}$ is 
a $312$-avoiding decreasing label on $T$.
Then, let $\mu_i$, $1\le i\le p$, be the Young diagram in the region $R_{i}$.
The generating function of decreasing labeled trees on $T$ is given by
\begin{align}
\label{eq:ZLY}
Z(L^{\downarrow}_{0},T)=\prod_{i=1}^{p}Y^{\downarrow}(\mu_i).
\end{align}
\end{prop}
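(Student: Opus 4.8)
The plan is to adapt the proof of Theorem~\ref{thrm:GFLT312}, removing the hypothesis ($**$) in the special situation at hand. In that proof the tree is simplified by repeatedly collapsing its lowest branch point, and ($**$) enters exactly at a branch point, to guarantee that in the enumeration the labels on its child edges may be permuted freely, so that one factor $Y^{\downarrow}$ is produced for each rectangle created at that step. Since the present $T$ has a single branch point, namely the root, the collapsing procedure reduces to one step, and the only issue is to see that this single step goes through without ($**$). I would establish $Z(L^{\downarrow}_{0},T)=\prod_{i}Y^{\downarrow}(\mu_{i})$ directly, by induction on the number of edges $N$, using the recursive formula Eq.~(\ref{eq:Zsum}).

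First, two structural observations. Since $L^{\downarrow}_{0}$ is $312$-avoiding, Proposition~\ref{prop:312trivial} shows that the Dyck tiling it determines contains only trivial Dyck tiles, so its top path is a Young diagram above $\lambda$; and since $\lambda=\lambda_{1}\circ\cdots\circ\lambda_{p}$ is a concatenation of humps, the region it bounds together with $U^{N}D^{N}$ is precisely the tower of rectangles $R_{i}$ from the discussion preceding Theorem~\ref{thrm:GFLT312}, inside which that top path restricts to the Young diagrams $\mu_{i}$. The base case $p=1$ is a single chain, which carries $L^{\downarrow}_{0}$ as its only decreasing label, so $Z(L^{\downarrow}_{0},T)=1$, matching the empty product. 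For the inductive step, let $l_{1}$ be the edge labeled $1$; since $L^{\downarrow}_{0}$ is decreasing, $l_{1}$ is the bottom edge of some chain $\lambda_{k}$, and the leaves $l_{1},\ldots,l_{m}$ weakly to the right of $l_{1}$ (in the notation of Eq.~(\ref{eq:Zsum})) are exactly the bottom edges of $\lambda_{k},\lambda_{k+1},\ldots,\lambda_{p}$. Each $T_{i}=T\setminus\{l_{i}\}$ still has the root as its only branch point (or is a chain), and by the proposition stating that deletion preserves $312$-avoidance, each $\widetilde{L_{i}^{\downarrow}}$ is again $312$-avoiding; the induction hypothesis then gives $Z(\widetilde{L_{i}^{\downarrow}},T_{i})=\prod_{j}Y^{\downarrow}(\nu^{(i)}_{j})$ for the Young diagrams $\nu^{(i)}_{j}$ cut out in the rectangles belonging to $T_{i}$.

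Substituting into Eq.~(\ref{eq:Zsum}), what remains is to check
\begin{align*}
\sum_{i=1}^{m}q^{p(L_{i}^{\downarrow})}\prod_{j}Y^{\downarrow}(\nu^{(i)}_{j})=\prod_{j}Y^{\downarrow}(\mu_{j}).
\end{align*}
Because every tiling in play is trivial, deleting the peak $l_{i}$ merely removes one strip of boxes from a single rectangle of the tower and re-adjusts it, while $p(L_{i}^{\downarrow})$ counts exactly the trivial tiles that reappear when that peak is reinserted; the identity then reduces to the one-step $q$-Pascal recursion for $Y^{\downarrow}$ of a rectangle, the same recursion that underlies the hook-length formula of Theorem~\ref{thrm:FacDT1}. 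This is where it becomes transparent that ($**$) is unnecessary: two chains interact only at the root, and there — the tilings being trivial — the enumeration is that of lattice paths in a tower of rectangles, which is insensitive to the order of the chain-top labels.

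The main obstacle is precisely this last matching step: one must pin down, for each $i$, the rectangles of $T_{i}$ and the diagrams $\nu^{(i)}_{j}$ in terms of the rectangles of $T$ and of its top path, identify the exponent $p(L_{i}^{\downarrow})$ geometrically, and verify that the weighted sum over $i$ telescopes to the target product. Everything else is either structural or a repetition of arguments already established in the paper.
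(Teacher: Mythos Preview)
Your approach is quite different from the paper's, and in this case considerably more laborious. The paper does not use the recursion~(\ref{eq:Zsum}) at all here. Instead it argues directly: because the only branch point of $T$ is the root, the rectangles $R_1,\ldots,R_p$ sit in a single tower, and one can build any decreasing label $L\ge L^{\downarrow}_0$ by choosing, independently in each rectangle $R_i$ (working from the rightmost rectangle inwards), a Young diagram $\nu_i\subseteq\mu_i$. That choice fixes the relative positions of the labels on the chains meeting at the root one pair of chains at a time, and the weight contributed in rectangle $R_i$ is $q^{|\mu_i|-|\nu_i|}$. Summing over all such tuples immediately gives the product $\prod_i Y^{\downarrow}(\mu_i)$. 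In effect the paper exhibits a weight-preserving bijection between $\{L:L^{\downarrow}_0\le L\}$ and $\prod_i\{\nu:\nu\subseteq\mu_i\}$, with no induction and no recursion.

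Your inductive strategy via~(\ref{eq:Zsum}) is not wrong in spirit, but as you yourself note, the entire content of the proof has been pushed into the unchecked ``last matching step.'' That step is not a single $q$-Pascal identity: when the label~$1$ sits on the leaf of the $k$-th chain with $k<p$, moving it to the leaf $l_i$ of chain $k+i-1$ and deleting that leaf alters the Young diagrams in several rectangles at once, and the resulting $\widetilde{L_i^{\downarrow}}$ need not satisfy any simple relation to $L^{\downarrow}_0$ rectangle by rectangle. Verifying the required identity from this point of view amounts to redoing the direct bijection in disguise. I would recommend abandoning the recursive route and writing down the bijection explicitly; that is both shorter and matches the paper.
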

%%%%%%%%%%%%
\begin{proof}
Suppose that the region $R_{i}$ is left to the region $R_{i+1}$ for $1\le i\le p-1$.
Let $\lambda_{t}$ and $\lambda_{b}$ be the top path and bottom path which surround 
the region $R_{p}$. This means that $\lambda_{t}=U^{m_1+m_2}D^{m_1+m_2}$ and 
$\lambda_{b}=U^{m_1}D^{m_1}U^{m_2}D^{m_2}$.
Suppose that $\nu\subseteq\mu_{p}$ be the Young diagram inside $\mu_p$ in the region $R_{p}$.
The diagram $\nu$ in $R_{p}$ gives the decreasing label $L^{\downarrow}$ on $\lambda_{b}$
such that $L^{\downarrow}\le L^{\downarrow}_{0}$.
Next, we consider the region $R_{p-1}$ in a similar way. 
We fix a decreasing label on the Dyck path $\lambda_{i}$ one-by-one starting from $\lambda_{p}$.
Note that the obtained deceasing label $L$ satisfies $L^{\downarrow}_{0}\le L$.
It is clear that the number of decreasing labels is equal to the product of $Y^{\downarrow}(\mu_{i})$,
which implies Eq. (\ref{eq:ZLY}).
\end{proof}

\begin{remark}
The condition that a decreasing label is $312$ is unavoidable. 
For example, the following Dyck tiling consists of only trivial Dyck tiles, but 
it contains the $312$-pattern.
\begin{align*}
\tikzpic{-0.5}{[scale=0.4]
\draw[very thick](0,0)--(2,2)--(4,0)--(6,2)--(8,0)--(11,3)--(14,0);
\draw(2,2)--(3,3)--(5,1)(3,1)--(6,4)--(9,1)(5,3)--(6,2)--(7,3)(7,1)--(8,2);
\draw(0.5,0.5)node[anchor=north west]{$5$}(1.5,1.5)node[anchor=north west]{$3$}
(4.5,0.5)node[anchor=north west]{$7$}(5.5,1.5)node[anchor=north west]{$4$}
(8.5,0.5)node[anchor=north west]{$6$}(9.5,1.5)node[anchor=north west]{$2$}
(10.5,2.5)node[anchor=north west]{$1$};
}
\end{align*}
The generating function is given by 
\begin{align*}
(1+q)(1+3q+2q^2+3q^3+q^4+q^5).
\end{align*}
This generating function cannot be factorized any further.
\end{remark}

\begin{example}
\label{ex:D1D2}
Consider the two decreasing labels depicted in Figure \ref{fig:decL}.
Although these two Dyck tilings are a mirror image of another,  
the generating functions are different.  
%%%%%%%%%%%%%%%%%%%%
\begin{figure}[ht]
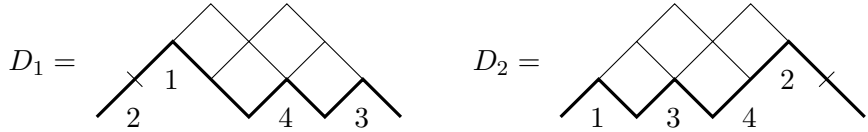

\begin{align*}
D_1=
\tikzpic{-0.5}{[scale=0.5]
\draw[very thick](0,0)--(2,2)--(4,0)--(5,1)--(6,0)--(7,1)--(8,0);
\draw(2,2)--(3,3)--(5,1)--(6,2)(3,1)--(5,3)--(7,1);
\draw(0.5,0.5)node[anchor=north west]{$2$}(1.5,1.5)node[anchor=north west]{$1$}
(4.5,0.5)node[anchor=north west]{$4$}(6.5,0.5)node[anchor=north west]{$3$};
\draw(0.8,1.2)--(1,1)--(1.2,0.8);
}\qquad
D_2=\tikzpic{-0.5}{[scale=0.5]
\draw[very thick](0,0)--(1,1)--(2,0)--(3,1)--(4,0)--(6,2)--(8,0);
\draw(1,1)--(3,3)--(5,1)(2,2)--(3,1)--(5,3)--(6,2);
\draw(6.8,0.8)--(7.2,1.2);
\draw(0.5,0.5)node[anchor=north west]{$1$}(2.5,0.5)node[anchor=north west]{$3$}
(4.5,0.5)node[anchor=north west]{$4$}(5.5,1.5)node[anchor=north west]{$2$};
}
\end{align*}
\caption{Two decreasing labels}
\label{fig:decL}
\end{figure}
The generating functions $Z(D_i)$ for the tiling $D_i$, $1\le i\le 2$, are given 
by 
\begin{align*}
Z(D_1)&=(1+q+q^2)^2, \\
Z(D_2)&=(1+q)(1+2q+q^2+q^3).
\end{align*} 
\end{example}

\begin{example}
\label{ex:D5}
Consider the following Dyck tiling associated to a permutation $12534$.
\begin{align*}
D=
\tikzpic{-0.5}{[scale=0.5]
\draw[very thick](0,0)--(1,1)--(2,0)--(3,1)--(4,0)--(5,1)--(6,0)--(7,1)--(8,0)--(9,1)--(10,0);
\draw(1,1)--(3,3)--(5,1)--(7,3)--(9,1)(2,2)--(3,1)--(6,4)--(9,1)(5,3)--(7,1)--(8,2);
\draw(0.5,0.5)node[anchor=north west]{$1$}(2.5,0.5)node[anchor=north west]{$2$}
(4.5,0.5)node[anchor=north west]{$5$}(6.5,0.5)node[anchor=north west]{$3$}
(8.5,0.5)node[anchor=north west]{$4$};
}
\end{align*}
The rectangle regions above $(UD)^{5}$ for the permutation $12534$ are depicted as 
\begin{align*}
\tikzpic{-0.5}{[scale=0.5]
\draw[very thick](0,0)--(1,1)--(2,0)--(3,1)--(4,0)--(5,1)--(6,0)--(7,1)--(8,0)--(9,1)--(10,0);
\draw[gray](1,1)--(5,5)--(9,1)(2,2)--(3,1)(3,3)--(5,1)(4,4)--(7,1);
\draw(0.5,0.5)node[anchor=north west]{$1$}(2.5,0.5)node[anchor=north west]{$2$}
(4.5,0.5)node[anchor=north west]{$5$}(6.5,0.5)node[anchor=north west]{$3$}
(8.5,0.5)node[anchor=north west]{$4$};
\draw(2,1)node{$R_1$}(3.5,1.5)node{$R_2$}(5,2)node{$R_3$}(6.5,2.5)node{$R_4$};
}
\end{align*}
Then, we have 
\begin{align*}
Z(D)=\prod_{i=1}^{4}Y^{\downarrow}(\mu_i)=[4][3]^2[2],
\end{align*}
where $\mu_1=(1)$, $\mu_2=(2)$, $\mu_{3}=(2)$, and $\mu_4=(3)$.
\end{example}

Note that from Proposition \ref{prop:Ydet}, the generating function 
$Y^{\downarrow}(\mu_i)$ can be expressed as a determinant.
Therefore, $Z(L^{\downarrow},T)$ can be expressed as the product 
of $p$ determinants.
Recall that given a decreasing label $L^{\downarrow}_{0}$ and $T$,
we define a determinant $Y(L^{\downarrow}_{0},T)$ (see Eq. (\ref{eq:Ydet1})).
From Proposition \ref{prop:YDyck}, the determinant $Y(L^{\downarrow}_{0},T)$
is the generating function of Dyck tilings which contain only trivial 
Dyck tiles.

By comparing $Z(L^{\downarrow}_{0},T)$ with $Y(L^{\downarrow}_{0},T)$,
we obtain the following proposition.

\begin{prop}
The generating function $W(L^{\downarrow}_0,T)$ defined by 
\begin{align*}
W(L^{\downarrow}_{0},T):=Z(L^{\downarrow}_0,T)-Y(L^{\downarrow}_0,T),
\end{align*}
satisfies $W(L^{\downarrow}_0,T)\in\mathbb{N}[q]$.
The generating function $W(L^{\downarrow}_0,T)$ is the generating 
function of Dyck tilings which contain at least one $312$-patterns.
\end{prop}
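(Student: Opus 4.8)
The plan is to express both $Z(L^{\downarrow}_{0},T)$ and $Y(L^{\downarrow}_{0},T)$ as generating functions for the same weight over two nested families of Dyck tilings above $\lambda:=\lambda(T)$, so that $W=Z-Y$ becomes the generating function over the complementary family. Let $\mathcal{D}$ denote the set of Dyck tilings $D'$ above $\lambda$ whose decreasing label $L'$ (obtained through the Hermite history) satisfies $L^{\downarrow}_{0}\le L'$. Using the bijection between labels of $T$ and Dyck tilings above $\lambda$, together with the identity $\mathrm{inv}(\omega(L'))=\mathrm{wt}(D')$ already exploited in the proof of Theorem \ref{thrm:FacDT1}, Definition \ref{defn:ZLT} gives
\begin{align*}
Z(L^{\downarrow}_{0},T)=\sum_{D'\in\mathcal{D}}q^{\mathrm{wt}(D')}.
\end{align*}

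The key step is to identify $Y(L^{\downarrow}_{0},T)$ with the sub-sum of the above running over the tilings of $\mathcal{D}$ that consist of only trivial Dyck tiles. By construction $Y(L^{\downarrow}_{0},T)=\mathrm{Det}(\mathbf{c}\to\mathbf{d})$, and by Proposition \ref{prop:YDyck} this determinant is the generating function of trivial Dyck tilings above $\lambda$ whose top path is weakly below the top path $\mu$ of the tiling attached to $L^{\downarrow}_{0}$; the weight of such a tiling is its number of trivial tiles, which agrees with $\mathrm{wt}$. It therefore remains to show that the trivial tilings in $\mathcal{D}$ are exactly the trivial tilings above $\lambda$ with top path $\le\mu$. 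One inclusion follows from the fact that a single cover $L\lessdot L'$ can only raise the top path of the associated tiling, so every tiling in $\mathcal{D}$ has top path $\le\mu$; the reverse inclusion amounts to checking that a trivial tiling above $\lambda$ with top path $\le\mu$ has decreasing label $\ge L^{\downarrow}_{0}$. Both follow by rephrasing the cover relation of Definition \ref{def:coverL} as the evident componentwise order on the column heights of trivial tilings, in the same spirit as (but simpler than) the proof of Theorem \ref{thrm:GFLT312}. I expect this translation to be the main obstacle.

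Granting these identifications,
\begin{align*}
W(L^{\downarrow}_{0},T)=Z(L^{\downarrow}_{0},T)-Y(L^{\downarrow}_{0},T)=\sum_{\substack{D'\in\mathcal{D}\\ D'\ \text{has a non-trivial tile}}}q^{\mathrm{wt}(D')},
\end{align*}
so every coefficient of $W(L^{\downarrow}_{0},T)$ is a non-negative integer, i.e. $W(L^{\downarrow}_{0},T)\in\mathbb{N}[q]$. Finally, by Proposition \ref{prop:312trivial} a Dyck tiling that contains a non-trivial tile necessarily has a $312$-containing label, while one checks that the $312$-avoidance of $L^{\downarrow}_{0}$ (together with condition ($**$)) forces every trivial tiling of $\mathcal{D}$ to have a $312$-avoiding label; hence inside $\mathcal{D}$ the tilings with a non-trivial tile coincide with the tilings whose label contains a $312$-pattern, and $W(L^{\downarrow}_{0},T)$ is precisely their generating function.
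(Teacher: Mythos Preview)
The paper does not give a separate proof of this proposition; it simply states ``By comparing $Z(L^{\downarrow}_{0},T)$ with $Y(L^{\downarrow}_{0},T)$, we obtain the following proposition,'' relying on the sentence immediately preceding it that $Y(L^{\downarrow}_{0},T)$ is the generating function of the trivial Dyck tilings (Proposition~\ref{prop:YDyck}). Your argument is exactly this implicit one made explicit: $Z$ is a weighted sum over all tilings in the poset, $Y$ is the sub-sum over the trivial ones, hence $W$ is the sub-sum over those containing a non-trivial tile, and in particular has non-negative coefficients. So your approach and the paper's coincide.

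One comment on the final paragraph. You try to upgrade ``contains a non-trivial tile'' to ``has a $312$-containing label'' by asserting that, under $312$-avoidance of $L^{\downarrow}_{0}$ and condition~($**$), every trivial tiling in $\mathcal{D}$ has a $312$-avoiding label. You do not prove this, and the Remark following Proposition~\ref{prop:312trivial} exhibits a $312$-containing decreasing label whose tiling is trivial, so the claim is not automatic and would need a genuine argument exploiting ($**$). The paper itself is loose on this point: the examples immediately after the proposition speak of ``non-trivial Dyck tilings'' rather than ``$312$-patterns'', suggesting that the intended content of the second sentence is precisely what you established, namely that $W$ enumerates the tilings in $\mathcal{D}$ possessing a non-trivial tile. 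If you want the literal statement about $312$-patterns, that final equivalence is the step that still needs to be written out.
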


For the Dyck tilings $D_1$ in Example \ref{ex:D1D2}, the generating 
function $Z(D_1)$ contains one non-trivial Dyck tiling. On the other hand,
$Z(D_2)$ contains two non-trivial Dyck tilings.
Similarly, the generating function $Z(D)$ in Example \ref{ex:D5} contains 
35 non-trivial Dyck tilings.

\section{Cover relation: revisited}
\label{sec:crrev}
In Section \ref{sec:LT}, we define a cover relation on decreasing labels 
of a tree $T$. 
The poset of decreasing labels is not in general a lattice. 
See Figure \ref{fig:LTHasse} as an example.
In this section, we consider another description of the cover relation 
in terms of a sequence of integers. 
We show that the poset of sequences is a lattice.

Let $L^{\downarrow}$ be a decreasing label of $T$.
We construct a sequence $\tau:=(\tau_1,\ldots,\tau_{n})$ of non-negative integers of length $n$
as follows.
The value $\tau_{i}$ is an integer given by
\begin{align}
\label{eq:deftau}
\tau_{i}:=2\cdot \#\{j>n+i-1 : e(n+1-i)\rightarrow e(j)\}
+\#\{j>n+1-i : e(n+1-i)\uparrow e(j)\}
\end{align}
where we define two symbols $\rightarrow$ and $\uparrow$ on edges by 
\begin{align*}
&``e(i)\rightarrow e(j)" \Leftrightarrow ``e(j) \text{ is strictly right to } e(i) \text{ in } T," \\
&``e(i)\uparrow e(j)" \Leftrightarrow ``e(j)\in p(e(i))," \\
\end{align*}
where $p(e(i))$ is a sequence of edges defined in Eq. (\ref{eq:defp}).
By construction, it is obvious that $0\le \tau_{i}\le 2(i-1)$ for all $1\le i\le n$.

We define the cover relation on an integer sequence $\tau$ as follows.
We write $\tau\lessdot \tau'$ if and only if 
there exists a pair $(i,j)$ of integers with $1\le i<j\le n$ such that 
\begin{enumerate}
\item $\tau_j\ge \tau_{i}+2$,
\item $\tau_{k}\ge \tau_{j}$ for $i+1\le k\le j-1$,
\item $\tau'_{i}=\tau_j-2$ and $\tau'_{j}=\tau_i$,
\item $\tau'_{k}=\tau_{k}$ if $k\neq i,j$.
\end{enumerate}
Since $\tau$ determines a label on $T$ uniquely, we have a bijection between 
a decreasing label $L^{\downarrow}$ and $\tau$.

\begin{prop}
\label{prop:tauL}
In the above notation, we have 
\begin{align*}
\tau\lessdot\tau'\Rightarrow L^{\downarrow}\lessdot L^{'\downarrow},
\end{align*}
where $L^{\downarrow}$ and $L^{'\downarrow}$ are the decreasing labels corresponding 
to $\tau$ and $\tau'$.
\end{prop}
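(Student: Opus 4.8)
The plan is to translate the combinatorial data of the cover relation on integer sequences back into the language of labeled trees and check that the four conditions of Definition~\ref{def:coverL} are met. First I would unwind the definition~\eqref{eq:deftau} of $\tau$: the value $\tau_i$ records, for the edge $e(n+1-i)$ carrying the label $n+1-i$ in $L^{\downarrow}$, a weighted count combining the edges strictly to its right with larger labels (counted with weight $2$) and the edges on the path $p(e(n+1-i))$ with larger labels (counted with weight $1$). The key observation is that this quantity behaves like a Lehmer-type code with respect to the post-order word $\omega(L^{\downarrow})$; in particular, an integer sequence $\tau$ with $0\le\tau_i\le 2(i-1)$ encodes exactly one decreasing label, which is precisely the bijection asserted just before the proposition. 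So the first step is to record explicitly how the transposition of values between positions $i$ and $j$ in $\tau$ corresponds to a transposition of the labels on two specific edges $e_l$ and $e_r$ of $T$, with $e_l$ strictly left of $e_r$.

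Second, I would verify conditions (1)--(3) of Definition~\ref{def:coverL}. Condition (1) is automatic since we stay within decreasing labels by the bijection. For (2) and (3): the hypotheses $\tau'_k=\tau_k$ for $k\neq i,j$ together with $\tau'_i=\tau_j-2$, $\tau'_j=\tau_i$ should force that only the labels of two edges $e_l,e_r$ change, that their values get swapped, and that $L^{\downarrow}(e_l)<L^{\downarrow}(e_r)$. Here the arithmetic relation $\tau_j\ge\tau_i+2$ is what guarantees the strict inequality on the labels, and the factor of $2$ in the definition of $\tau$ (coming from the ``strictly right'' contribution) is what makes the shift by $2$ land correctly: moving a label past an edge to its right changes the weighted count by $2$, while the decreasing-label reordering along $p(e_l)$ accounts for the possible $\pm1$ corrections. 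I would make this precise by computing $\tau$ and $\tau'$ for the two labels and matching coordinate by coordinate.

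Third, and this is where the real content lies, I would show condition (4): there is no edge $e_c$ strictly right of $e_l$ and strictly left of $e_r$ whose label lies in the interval $[L^{\downarrow}(e_l),L^{\downarrow}(e_r)]$. This is exactly what the ``intermediate'' condition $\tau_k\ge\tau_j$ for $i+1\le k\le j-1$ is designed to encode. The plan is to argue contrapositively: if such an $e_c$ existed, it would sit at some position $k$ strictly between $i$ and $j$ in the post-order indexing, and its label lying in the swapped interval would force $\tau_k<\tau_j$, contradicting hypothesis (2). Making this airtight requires knowing that the positions of $e_l$, $e_c$, $e_r$ in the $\tau$-indexing are ordered the same way as their labels are ordered relative to the post-order reading — essentially that ``strictly right'' plus the label interval condition pins down the relative position in the sequence — so the bulk of the work is a careful bookkeeping argument relating the spatial position of an edge, its label, and the corresponding index in $\tau$.

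The main obstacle, as I see it, is precisely this last translation: the quantity $\tau_i$ mixes two genuinely different geometric relations (``strictly right of'' and ``lies on the path to the root of'') with different weights, so one must be careful that the intermediate-value condition $\tau_k\ge\tau_j$ really captures the ``no blocking edge $e_c$'' condition and not something weaker or stronger. I would handle this by first treating the case where $e_l$ and $e_r$ are leaves (so that $p(e_l)$ contributes nothing to the relevant comparisons and only the weight-$2$ term matters), establishing the statement cleanly there, and then bootstrapping to the general case by analyzing how the reordering along $p(e_l)$ — which, as noted in the proof of the theorem following Definition~\ref{def:coverL}, moves a label downward by at most one edge — affects the sequence $\tau$ by at most $\pm1$ in the relevant coordinates, which is absorbed by the strict inequalities in conditions (1) and (2).
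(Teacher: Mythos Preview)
Your overall strategy---translate the hypotheses on $\tau$ back into statements about the positions of the edges carrying labels $\overline{i}:=n+1-i$ and $\overline{j}:=n+1-j$, and then check Definition~\ref{def:coverL}---is exactly what the paper does. But you are importing machinery that does not belong here and that would send the argument down a false trail.

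The ``decreasing-label reordering along $p(e_l)$'' that you invoke is lifted from the proof of Proposition~5.2; it plays no role in the cover relation of Definition~\ref{def:coverL}. In that definition $e_l$ is \emph{strictly} left of $e_r$, so the two edges lie in incomparable branches and swapping their labels automatically preserves decreasingness---there is nothing to reorder, and hence no $\pm1$ corrections to account for. Your leaves-first bootstrap is likewise misdirected: a leaf has the \emph{longest} path $p(e)$ to the root, so the weight-$1$ ancestral term in $\tau$ is typically largest, not absent, for leaves. What you may have in mind is an edge adjacent to the root, but even that special case is unnecessary.

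The paper disposes of the weight-$1$ complication in one stroke by a case analysis on the relative position of $e(\overline{i})$ and $e(\overline{j})$ in $T$. If $e(\overline{i})$ is strictly left of $e(\overline{j})$ one gets $\tau_i\ge\tau_j$, contradicting~(1). If $e(\overline{i})\in p(e(\overline{j}))$ (the ancestral case, where the weight-$1$ term is relevant) one gets either $\tau_j=\tau_i+1$ or some $\tau_k\le\tau_i$ with $i<k<j$, contradicting~(1) or~(2). So only the case ``$e(\overline{i})$ strictly right of $e(\overline{j})$'' survives, and there the blocking-edge condition~(4) follows directly from hypothesis~(2) via the contrapositive you sketch. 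This is both shorter and avoids the bookkeeping you flag as the main obstacle.
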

%%%%%%%%%%%%%
\begin{proof}
Let $i<j$ be two integers.
We denote $\overline{i}:=n+1-i$.
Suppose $\tau\lessdot\tau'$ and they satisfy the conditions from (1) to (4).
We consider the two labels $\overline{i}$ and $\overline{j}$ in a decreasing label of $T$. 
If we have $\overline{i}$ is left to $\overline{j}$ a tree $T$, then we have 
$\tau_{i}\ge\tau_{j}$. This violates the condition (1).
Similarly, if we have $\overline{i}$ is contained in the sequence of edges $p(\overline{j})$,
then we have $\tau_{j}=\tau_{i}+1$, or there exists an integer $i<k<j$ such that 
$\tau_{k}\le\tau_{i}$.  This violates the condition (2).
Finally, we consider the case where $\overline{i}$ is right to $\overline{j}$.
Since $\overline{i}>\overline{j}$, we have $\tau_{j}\ge\tau_{i}+2$.
If there exists an integer $i<k<j$ such that the label $\overline{k}$ right to 
$\overline{j}$ and left to $\overline{i}$, then we have $\tau_{i}<\tau_{k}<\tau_{j}$.
This violates the condition (2).
From these, if $\tau$ and $\tau'$ satisfy the conditions (1) to (4),
then the label $\overline{j}$ is left to $\overline{i}$ in $T$ and 
there exists no $k$ such that $i<k<j$ and $\overline{k}$ is right to 
$\overline{j}$ and left to $\overline{i}$.
This means that the labels $\overline{i}$ and $\overline{j}$ can be 
exchanged in the decreasing label.
Therefore, we have $\tau\lessdot\tau'\Rightarrow L\lessdot L'$.
\end{proof}

\begin{remark}
In general, the converse of Proposition \ref{prop:tauL} is not true.
The cover relation on sequences $\tau$ of integers is weaker than 
the cover relation on decreasing labels. 
An example will be given in Example \ref{ex:posettault}.
\end{remark}

\begin{lemma}
\label{lemma:crtau}
Let $(i,j)$ be a pair of integers satisfying the conditions (1) and (2).
Then, $j$ is the label of a decreasing label $L^{\downarrow}$ satisfying the following conditions:
\begin{enumerate}[(a)]
\item $j>i$ and $j$ is right to $i$,
\item there exists no integer $j'$ such that $i<j'<j$ and $j'$ is right to $i$ and left to $j$, 
\item $j$ is the right-most among the labels satisfying (b).
\end{enumerate}
\end{lemma}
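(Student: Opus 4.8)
The statement is essentially a translation of the combinatorial conditions (1) and (2) on the integer sequence $\tau$ into the language of the decreasing label $L^{\downarrow}$. The plan is to unpack the definition of $\tau_i$ in Eq.~(\ref{eq:deftau}) and read off what conditions (1) and (2) say about the position of the edges $e(\overline{i})$ and $e(\overline{j})$ in the tree $T$, where $\overline{i}=n+1-i$. Recall from the proof of Proposition~\ref{prop:tauL} that $\tau_i$ records, for the edge carrying label $\overline{i}$, twice the number of edges with larger labels that are strictly right of it plus the number of edges with larger labels lying above it on the path to the root. The key observation, already used in Proposition~\ref{prop:tauL}, is that $\tau_j\ge\tau_i+2$ forces label $j$ to sit strictly to the right of label $i$ (if $j$ were left of $i$ we would get $\tau_i\ge\tau_j$, and if $j$ lay on $p(e(\overline{i}))$ we would get $\tau_j=\tau_i+1$), which gives condition (a).

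**Key steps.** First I would establish (a): combine $j>i$ (immediate from the indexing $\overline{i},\overline{j}$ and the fact that decreasing labels have larger numbers closer to leaves — more precisely, use that the edges are indexed so that $e(\overline{i})$ has label $\overline{i}$) with the dichotomy argument above to conclude that $e(j)$ is strictly right of $e(i)$. Second, for (b) I would argue by contradiction: suppose there is $j'$ with $i<j'<j$, with label $j'$ strictly right of $i$ and strictly left of $j$. Then every edge with label $>j$ that is right of $e(j)$ is also right of $e(j')$, and $e(j)$ itself is right of $e(j')$, so tracking the contributions in Eq.~(\ref{eq:deftau}) yields $\tau_{j'}\ge\tau_j+1>\tau_j$; but then the index $j'$ (with $i<j'<j$) satisfies $\tau_{j'}\ge\tau_j$, which is consistent with condition (2), so I need instead to compare $\tau_{j'}$ against $\tau_i$: being right of $e(i)$ and left of $e(j)$ places $\tau_{j'}$ strictly between $\tau_i$ and $\tau_j$, contradicting that condition (2) demands $\tau_k\ge\tau_j$ for all $i<k<j$. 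Third, for (c) I would again use condition (2): if $j''>j$ were another label, right of $i$ and left of $j''$'s would-be position consistent with (b), then... — more carefully, (c) says $j$ is the rightmost label satisfying (a) and (b); I would show that if some $\tilde{\jmath}$ strictly right of $e(j)$ also satisfied (a) and (b), then $\tilde{\jmath}$ corresponds to an index whose $\tau$-value behaviour conflicts either with maximality built into the choice of $(i,j)$ or with condition (2). Here I should be careful: the lemma as stated fixes $(i,j)$ and asserts $j$ is rightmost; the cleanest route is to show the map $(i,j)\mapsto$ (label $j$) is the unique candidate, i.e. conditions (1)–(2) single out exactly the edge described by (a)–(c).

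**Main obstacle.** The delicate point is the bookkeeping in step two and three: condition (2) is stated as $\tau_k\ge\tau_j$ for $i+1\le k\le j-1$, and I must verify that an edge sitting strictly between $e(i)$ and $e(j)$ in the left–right tree order necessarily has $\tau$-value strictly less than $\tau_j$ (forcing it out) or, in the other direction, that an edge further right than $e(j)$ satisfying (a) and (b) would force $\tau_j < \tau_{\tilde\jmath} $ in a way that contradicts the chain of inequalities $\tau_k\ge\tau_j$. Getting the parity and the "twice the right-count plus the up-count" structure of Eq.~(\ref{eq:deftau}) to yield sharp strict inequalities — rather than merely weak ones — is where the proof can go wrong, and I expect this to require a careful case split according to whether the intervening edge lies above, below, or in a sibling subtree relative to $e(i)$ and $e(j)$. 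Once those inequalities are pinned down, (a), (b), (c) follow directly, and the lemma is essentially a dictionary lookup between the $\tau$-conditions and the tree-position conditions.
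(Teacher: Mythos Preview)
Your approach is essentially the paper's: parts (a) and (b) are exactly the case analysis from the proof of Proposition~\ref{prop:tauL} (the three-way split according to whether the edge carrying label $\overline{j}$ lies strictly left of, on the path above, or strictly right of the edge carrying $\overline{i}$, together with the observation that an intermediate edge would force $\tau_i<\tau_{j'}<\tau_j$ and hence violate condition~(2)), and the paper simply cites that proposition rather than repeating the argument.

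The only place where your proposal is noticeably looser than the paper is part~(c). The paper's argument is a single stroke: if some label $j'>j$ sat strictly right of $i$ and strictly left of $j$, then by the definition of $\tau$ one gets $\tau_{n+1-j'}<\tau_{n+1-j}<\tau_{n+1-i}$, so the pair $(i,j')$ (with the index of $j'$ in place of $j$) already fails condition~(2), and hence $(i,j)$ is forced to pick the rightmost such label. Your sketch circles around this (``conflicts either with maximality \dots\ or with condition~(2)'') without landing on the specific inequality; once you write down that chain of strict inequalities the case split you anticipate in your ``Main obstacle'' paragraph becomes unnecessary, and (c) follows in one line.
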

%%%%%%%%%%%%%%
\begin{proof}
Since the pair $(i,j)$ satisfies the conditions (1) and (2), $(i,j)$ satisfies the conditions 
(a) and (b) by Proposition \ref{prop:tauL}.
Suppose the label $j'>j$ is right to $i$ and left to $j$ in $L^{\downarrow}$.  
By construction of $\tau$ in Eq. (\ref{eq:deftau}), 
we have $\tau_{n+1-j'}<\tau_{n+1-j}<\tau_{n+1-i}$. 
By the conditions (1) and (2), the pair $(i,j')$ is not admissible.
This implies that the pair $(i,j)$ satisfies the condition (c).
\end{proof}

The cover relation $\lessdot$ defines a poset $(\mathcal{S}(\tau),\lessdot)$ where 
$\mathcal{S}(\tau)$ is the set of sequences $\tau$.
Equivalently, we also have a poset $(\mathcal{S}(L^{\downarrow}),\lessdot)$ which 
is induced from the poset $(\mathcal{S}(\tau),\lessdot)$ by Proposition \ref{prop:tauL}.
Here, $\mathcal{S}(L^{\downarrow})$ is the set of decreasing labels.
We identify $(\mathcal{S}(\tau),\lessdot)$ with $(\mathcal{S}(L^{\downarrow}),\lessdot)$
by the natural bijection.

\begin{example}
\label{ex:posettault}
We consider the tree $T$ corresponding to $(UD)^{3}$.
In Figure \ref{fig:twoposets}, we depict two posets associated to the tree $T$.
The left poset is $(123,\lessdot)$ for decreasing labels defined in Definition \ref{defn:posetlt},
and the right poset is $(\mathcal{S}(024),\lessdot)$.
Note that we do not have an edge from $132$ to $312$ in the right poset.
\begin{figure}[ht]
\tikzpic{-0.5}{[yscale=0.5,xscale=0.8]
\node(0)at(0,0){$123$};
\node(1)at(2,2){$213$};
\node(2)at(2,-2){$132$};
\node(3)at(4,2){$312$};
\node(4)at(4,-2){$231$};
\node(5)at(6,0){$321$};
\draw(0)to(1)(0)to(2)(1)to(3)(1)to(4)(2)to(3)to(5)(2)to(4)to(5);
}\qquad
\tikzpic{-0.5}{[yscale=0.5,xscale=0.8]
\node(0)at(0,0){$024$};
\node(1)at(2,2){$022$};
\node(2)at(2,-2){$004$};
\node(3)at(4,2){$002$};
\node(4)at(4,-2){$020$};
\node(5)at(6,0){$000$};
\draw(0)to(1)(0)to(2)(1)to(3)(1)to(4)(3)to(5)(2)to(4)to(5);
}
\caption{Two posets $(123,\lessdot)$ and $(\mathcal{S}(024),\lessdot)$}
\label{fig:twoposets}
\end{figure}
\end{example}

\begin{prop}
\label{prop:lattice}
The poset $(\mathcal{S}(\tau),\lessdot)$, equivalently $(\mathcal{S}(L^{\downarrow}),\lessdot)$, 
is a graded bounded lattice.
\end{prop}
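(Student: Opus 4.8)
The plan is to verify, in order, that the poset is graded, that it is a lattice, and that it is bounded; the lattice property is where the real work lies.

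\textbf{Gradedness and boundedness.}
A covering step $\tau\lessdot\tau'$ replaces the entries $(\tau_i,\tau_j)$ by $(\tau_j-2,\tau_i)$ and leaves every other entry unchanged, so $\sum_k\tau'_k=\sum_k\tau_k-2$. Hence the quantity $|\tau|:=\sum_k\tau_k$ strictly decreases by $2$ at each cover, every saturated chain between two comparable elements has length $\tfrac{1}{2}(|\tau|-|\tau'|)$, and $\mathrm{rk}(\nu):=\tfrac{1}{2}(|\tau|-|\nu|)$ (with $\tau$ the minimal sequence) is a rank function. The poset has least element $\tau$, the sequence attached to $L^{\downarrow}_{0}$. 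Since $\mathcal{S}(\tau)$ is finite, once the lattice property is established it automatically possesses a greatest element as well, so the poset is bounded; therefore it suffices to prove that it is a lattice.

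\textbf{Lattice.}
A finite poset with a $\hat 0$ is a lattice as soon as every two of its elements have a join, so fix $\sigma,\sigma'\in\mathcal{S}(\tau)$ and build $\sigma\vee\sigma'$. First I would reformulate the covers using Lemma \ref{lemma:crtau}: the covers out of a node $\rho$ are indexed by their left entry $i$, with the partner $j=j(\rho,i)$ uniquely the rightmost admissible one, so the elementary moves available at $\rho$ form a canonical, mutually non-overlapping family. The crucial step is then a confluence statement: if two distinct covers are available at $\rho$, leading to $\rho^{(1)}$ and $\rho^{(2)}$, then $\rho^{(1)}$ and $\rho^{(2)}$ have a common upper bound obtained by one further canonical move from each (a local diamond condition), and, iterating, the reachability relation $\le$ admits a greedy normal form. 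Granting this, I would take for $\sigma\vee\sigma'$ the output of a closure operator applied to a componentwise-type combination of $\sigma$ and $\sigma'$: keep performing forced canonical moves until the sequence again encodes a decreasing label of $T$, then check (a) the process terminates inside $\mathcal{S}(\tau)$, (b) $\sigma\le\sigma\vee\sigma'$ and $\sigma'\le\sigma\vee\sigma'$, and (c), using the normal form, that $\sigma\vee\sigma'$ lies below every common upper bound of $\sigma$ and $\sigma'$. Meets follow dually, or from joins by finiteness, completing the lattice property; combined with the previous paragraph this gives the statement.

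\textbf{Main obstacle.}
I expect (a) and (c), and the confluence statement underlying them, to be the hard part. The gatekeeping clause ``$\tau_k\ge\tau_j$ for $i+1\le k\le j-1$'' in the cover relation means that a numerically plausible sequence need not correspond to any decreasing label of $T$, so one must argue that the closure both terminates and does not overshoot; the confluence lemma is precisely what controls this, and its proof requires a careful case analysis on the relative positions of the indices of two competing moves, in the spirit of the proof of Proposition \ref{prop:tauL}. A more order-theoretic route that sidesteps explicit formulas is to show directly that the common upper bounds of $\{\sigma,\sigma'\}$ form a directed set: from two such bounds $u_1,u_2$ one produces $u_3\le u_1,u_2$ still dominating $\sigma$ and $\sigma'$ by pushing down along shared covering moves, and iterating in the finite poset yields the join. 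This still hinges on understanding how two elements of $\mathcal{S}(\tau)$ interact under the cover moves, which is the same core difficulty.
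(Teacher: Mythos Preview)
Your gradedness argument is correct and cleaner than the paper's: the observation that each cover decreases $\sum_k\tau_k$ by exactly $2$ immediately yields a rank function and confirms that $\lessdot$ really is the cover relation of the poset it generates.

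For the lattice property, however, what you have is a plan, not a proof. You correctly isolate the local diamond condition as the crux, but you do not prove it; the ``closure operator applied to a componentwise-type combination'' is never defined, and you explicitly leave (a) and (c) open.

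The paper takes a more concrete route that avoids general confluence. It builds the join by a greedy algorithm scanning the index $j$ from $n$ down to $1$ and applying the cover move at $(i,j)$ whenever the $j$-th entry still needs adjustment. The structural inputs that make this work are Lemma~\ref{lemma:ij} (for fixed $j$ there is at most one admissible $i$, so each step is deterministic), Lemma~\ref{lemma:crtau} (such a move exists when needed), and Lemma~\ref{lemma:tauone} (no two distinct elements cover the same pair, which forces uniqueness of the join). For the meet the paper uses a trick you did not consider: the mirror image $L\mapsto{}^{T}L$ along the vertical axis reverses the order, so the meet of $\tau,\tau'$ is the already-constructed join of their mirror-image sequences. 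This duality replaces a second round of analysis by a one-line reduction.

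If you want to complete your own approach, the diamond you need is essentially Lemma~\ref{lemma:tauone}, whose proof is precisely the case analysis on relative positions of the involved labels that you anticipated; you may as well invoke it directly rather than redevelop it inside a confluence framework.
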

%%%%%%%%%%%%%%

\begin{remark}
The poset of decreasing labels $(L^{\downarrow},\lessdot)$ is not a lattice 
in general. For example, consider the poset $(123,\lessdot)$ in Figure \ref{fig:twoposets}.
The elements $312$ and $231$ cover both of $213$ and $132$.
\end{remark}

Before proceeding to the proof of Proposition \ref{prop:lattice},
we introduce a lemma concerning the cover relation on a sequence $\tau$.

\begin{lemma}
\label{lemma:ij}
Fix an integer $j$. Then, there exists at most one $i<j$ such that 
the pair $(i,j)$ satisfies the conditions (1) to (4) for $\tau$.
\end{lemma}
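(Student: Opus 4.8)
The plan is to argue by contradiction, and to note first that conditions (3) and (4) play no role: once the pair $(i,j)$ is fixed, they merely prescribe the entries of $\tau'$, so the content of the lemma is that for a fixed $j$ at most one $i<j$ can satisfy conditions (1) and (2) simultaneously. So I would suppose, for contradiction, that there are two distinct indices $i_1<i_2<j$ such that both pairs $(i_1,j)$ and $(i_2,j)$ satisfy (1) and (2) for $\tau$.

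The key observation is that $i_1<i_2\le j-1$, so $i_2$ lies in the index range $i_1+1\le k\le j-1$ that appears in condition (2) for the pair $(i_1,j)$; note this range is automatically non-empty precisely because a second candidate $i_2$ exists. Condition (2) for $(i_1,j)$ therefore gives $\tau_{i_2}\ge\tau_j$. On the other hand, condition (1) for the pair $(i_2,j)$ gives $\tau_j\ge\tau_{i_2}+2$, i.e.\ $\tau_{i_2}\le\tau_j-2$. Combining the two inequalities yields
\begin{align*}
\tau_j\le\tau_{i_2}\le\tau_j-2,
\end{align*}
which is absurd. Hence no two such indices can exist, and at most one $i<j$ satisfies the conditions (1) to (4) for the fixed $j$.

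I do not anticipate any genuine obstacle here; the argument is a two-line inequality chase. The only mild point of care is the bookkeeping that the range in condition (2) is non-empty in the situation at hand, which I would dispatch in one sentence as above. (If desired, one could also remark that this uniqueness is what makes the meet and join operations in Proposition \ref{prop:lattice} well defined, but that is not needed for the present statement.)
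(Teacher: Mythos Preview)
Your proof is correct and essentially the same as the paper's. The paper also reduces to conditions (1) and (2), then argues that if $(i,j)$ satisfies them, any $k$ with $i<k<j$ fails (1) because $\tau_k\ge\tau_j$, and any $i'<i$ fails (2) because $\tau_i<\tau_j$; your contradiction argument with $i_1<i_2$ is just the same inequality chase packaged slightly more compactly via a without-loss-of-generality ordering.
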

%%%%%%%%%%%
\begin{proof}
Suppose that a pair $(i,j)$ satisfies the conditions (1) to (4).
Since $\tau_{k}\ge\tau_{j}$ for $i+1\le k\le j-1$, the pair $(k,j)$ does not 
satisfy the condition (1).
Consider the integer $i'<i$. Since $\tau_{i}<\tau_{j}$, the pair $(i',j)$
violates the condition (2).
From these, we have at most one pair $(i,j)$ for a given $j$. 
\end{proof}

Recall that we have a bijection between a sequence $\tau$ and  a decreasing label $L$.
By this bijection, we have an unrefineable sequence of labels
\begin{align}
\label{eq:pchainL}
L^{\downarrow}_{\min}=L_0\lessdot L_1\lessdot L_2\lessdot\ldots\lessdot L_{p}=L, 
\end{align}
where $L^{\downarrow}_{\min}$ is a decreasing label corresponding to the minimal integer 
sequence $\tau_{\min}$ of the poset.
By the cover relation on decreasing labels induced from the cover relation on $\tau$, 
it is obvious that the integer $p$ is uniquely determined given two labels $L$ 
and $L^{\downarrow}_{\mathrm{min}}$.
Note there may be several unrefineable sequences of length $p$. 
However, the number $p$ is not changed by a choice of a sequence. 
Then, we define the rank $\rho(L)$ of the label $L$ by $\rho(L):=p$.

\begin{lemma}
\label{lemma:tauone}
Let $\tau_1\neq\tau_2$ be two sequences satisfying $\rho(\tau_1)=\rho(\tau_2)\neq0$.
Then, there is no pair $(\tau,\tau')$ such that $\tau\neq\tau'$, and $\tau$ and $\tau'$ cover 
both $\tau_1$ and $\tau_2$.
\end{lemma}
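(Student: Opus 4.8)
\textbf{Proof plan for Lemma \ref{lemma:tauone}.}

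The statement is the "down-compatibility" dual of Lemma \ref{lemma:ij}: there we fixed the larger coordinate $j$ and showed the cover partner $i$ below $j$ is unique; here we want that two distinct elements of the same rank cannot have a common cover. My plan is to argue by contradiction. Suppose $\tau$ and $\tau'$ are distinct sequences each covering both $\tau_1$ and $\tau_2$, with $\tau_1 \neq \tau_2$ and $\rho(\tau_1) = \rho(\tau_2) = r$, say. Since the cover relation changes exactly two coordinates (by conditions (3)--(4)), the sequence $\tau$ differs from $\tau_1$ in exactly two positions, call them $\{i_1, j_1\}$, and from $\tau_2$ in exactly two positions $\{i_2,j_2\}$; similarly $\tau'$ differs from $\tau_i$ in a pair $\{i_i', j_i'\}$. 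The first step is to note $\tau_1 \neq \tau_2$ forces $\{i_1,j_1\} \neq \{i_2,j_2\}$, hence these two pairs differ in at least one coordinate, so $\tau_1$ and $\tau_2$ differ in at most four positions and at least two positions; in fact a short case check (they are both one step below $\tau$) shows they differ in exactly $2$, $3$, or $4$ positions. I will then rule out each case by using the structure of the cover move.

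The key combinatorial input is the precise form of the cover: going \emph{down} from $\tau$ at the pair $(i,j)$ means $\tau_i = \tau_j + 2$ in the target (after the swap the smaller value sits at $j$), $\tau_k \ge$ the new $j$-value for $i < k < j$, etc. Translating: if $\tau$ covers both $\tau_1$ and $\tau_2$, then $\tau$ has, at position $j_1$ and $j_2$ respectively, a "descent pattern" that both downward moves exploit. I would split on whether the intervals $[i_1,j_1]$ and $[i_2,j_2]$ (the spans of the two down-moves from $\tau$) are nested, disjoint, or crossing. In the disjoint or properly nested case, the two moves commute, and one checks directly that $\tau'$ is forced to equal $\tau$ (the unique "join-type" element lying above both $\tau_1,\tau_2$ obtained by doing both moves from below), contradicting $\tau \neq \tau'$; more precisely, in these cases there is a \emph{unique} minimal element covering both $\tau_1$ and $\tau_2$, so no second common cover $\tau'$ exists. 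The crossing case is where the conditions (1),(2) on a down-move at the relevant interval fail simultaneously for the two moves, so it cannot occur at all: I will show that if the spans cross, then the weak-inequality condition (2) for one move is violated by the landing value of the other, giving a contradiction with $\tau$ covering both $\tau_1$ and $\tau_2$.

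The main obstacle I anticipate is the crossing / overlapping-span case: it requires carefully tracking how the two coordinates changed by the first move interact with the interval-inequality condition (2) of the second, and making sure no configuration of values slips through. A clean way to organize this is to use Lemma \ref{lemma:ij} as a black box on the level of $\tau$: fixing the larger index $j$ of one of the down-moves pins down its partner $i$ uniquely, so the only freedom is in choosing \emph{which} index plays the role of $j$; this cuts the casework down to essentially comparing at most two candidate down-moves from $\tau$ and showing that if both are legal and land on distinct sequences $\tau_1 \neq \tau_2$, then the "other" common cover must coincide with $\tau$. I would also rely on the rank function $\rho$ (well-defined by the discussion preceding the lemma) to guarantee that "covering both" means each is exactly one rank below, so no longer chains need to be considered. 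Once the three span-configurations are dispatched, the contradiction in every case yields the lemma.
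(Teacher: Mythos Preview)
Your overall setup is sound and your crossing-span argument is correct: if $\tau$ covers $\tau_1$ via $(i_1,j_1)$ and $\tau_2$ via $(i_2,j_2)$ with $i_1<i_2<j_1<j_2$, then the interval condition for the first move forces $\tau_{i_2}\ge\tau_{i_1}+2$ while the second forces $\tau_{j_1}\ge\tau_{i_2}+2$, contradicting $\tau_{i_1}\ge\tau_{j_1}$. However, the disjoint/nested case has a genuine gap. Commutativity of the two down-moves gives you a diamond with top $\tau$ and some bottom $\tau_0$, but it does \emph{not} by itself show that $\tau$ is the only rank-$(r{+}1)$ element above both $\tau_1$ and $\tau_2$; you still have to rule out a second common cover $\tau'$ directly. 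Your appeal to Lemma~\ref{lemma:ij} ``on the level of $\tau$'' is also misplaced: that lemma fixes the larger index $j$ of an \emph{up}-move from the lower element and pins down $i$. It applies to up-moves from $\tau_1$ (or $\tau_2$), not to down-moves from $\tau$, and there is no obvious dual statement for down-moves. To close the gap you would need to argue, for instance, that any common cover $\tau'$ differs from $\tau_1$ and $\tau_2$ only within the four positions $\{i_1,j_1,i_2,j_2\}$, then invoke Lemma~\ref{lemma:ij} on $\tau_1$ and $\tau_2$ separately to pin down the up-move pairs, and finally check values to force $\tau'=\tau$; this is doable but is real work you have not yet sketched.

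For comparison, the paper takes a rather different route. It anchors the analysis at $\tau_1$ rather than at $\tau$: writing $\tau_1\xrightarrow{(i,j)}\tau$ and $\tau_1\xrightarrow{(k,l)}\tau'$, it translates everything into the decreasing-label picture via Proposition~\ref{prop:tauL} and Lemma~\ref{lemma:crtau}, and then does explicit casework on whether $i,j,k,l$ are all distinct or share a value, tracking relative left/right positions of the labels $\overline{i},\overline{j},\overline{k},\overline{l}$ in $L(\tau_1), L(\tau), L(\tau')$ and showing in each configuration that no $\tau_2\neq\tau_1$ can sit below both $\tau$ and $\tau'$. Your interval-geometry approach on sequences is cleaner in spirit and avoids the label translation, but the paper's label-based casework is what actually closes the non-crossing cases without appealing to a commutativity heuristic.
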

%%%%%%%%%%%%%
\begin{proof}
We write $\tau\xrightarrow{(i,j)}\tau'$ if the pair $(i,j)$ satisfies the conditions (1) to (4).
Suppose that we have $\tau_1\lessdot\tau,\tau'$. 
This implies that we have $\tau_1\xrightarrow{(i,j)}\tau$ and $\tau_1\xrightarrow{(k,l)}\tau'$.
We write the decreasing label corresponding to $\tau$ by $L(\tau)$.
We consider the following  two cases: 1) $i, j, k$ and $l$ are all distinct, 
2) three of $i, j, k$ and $l$ are distinct.

Case 1). 
In a deceasing label corresponding to $\tau_1$, the label $j$ is right to $i$ and 
the label $l$ is right to $k$.
This implies that $j$ is left to $i$ and $l$ is right to $k$ in $L(\tau)$.
Similarly, since $\tau\neq\tau'$, the label $l$ is left to $k$ and $j$ is right to $i$ in $L(\tau')$.
Since $\tau_1\neq\tau_2$, we consider the two cases: a) the positions of $i$ and $j$ in $L(\tau)$
are the same as those of $L(\tau_2)$ respectively, and b) the position of either $i$ or $j$ in $L(\tau)$
is the same of that in $L(\tau_2)$.
In both cases, the positions of both $i$ and $j$ in $L(\tau_2)$ are different from those in $L(\tau')$.
The cover relation implies that there exists no pair $(i',j')$ such that 
$\tau_2\xrightarrow{(i',j')}\tau'$ since $j>i$ and $\rho(\tau')-\rho(\tau_2)=1$.

Case 2). We consider the positions of the labels $i,j$ and $k$ in $L(\tau_1)$.
We have six cases corresponding to the permutations in $\{i,j,k\}$ with $i<j<k$.
We write $x\rightarrow y$ if $y$ is right to $x$ in $L(\tau_1)$.
The first case is $k\rightarrow j\rightarrow i$. In this case, there is no element which
covers $\tau_1$.
The second case is $k\rightarrow i\rightarrow j$, $j\rightarrow k\rightarrow i$, and 
$i\rightarrow k\rightarrow j$. By Lemma \ref{lemma:crtau}, there exists only one 
element which covers $\tau_1$.
The third case is $j\rightarrow i\rightarrow k$.
Then, we have $j\rightarrow k\rightarrow i$ in $L(\tau)$ and $k\rightarrow i\rightarrow j$ in $L(\tau')$.
Since $\tau_1\neq\tau_2$ and $\tau$ covers $\tau_2$, a unique candidate 
for $\tau_2$ is $i\rightarrow k\rightarrow j$.
However, it is obvious that $\tau'$ does not cover $\tau_2$.
The last case is $i\rightarrow j\rightarrow k$ in $L(\tau_1)$. Then, we have 
$j\rightarrow i\rightarrow k$ in $L(\tau)$ and $i\rightarrow k\rightarrow j$ in $L(\tau')$.
As in the third case, there exists no $\tau_2$ such that $\tau_2\lessdot\tau,\tau'$.

In both cases, there is no two sequences $\tau$ and $\tau'$ which cover $\tau_1$ and $\tau_2$ at the 
same time. This completes the proof.
\end{proof}

\begin{proof}[Proof of Proposition \ref{prop:lattice}]
To show the poset is a lattice, we show that $\rho$ is indeed a rank function and show that there exists 
a unique join and meet given two sequences $\tau$ and $\tau'$.

From Proposition \ref{prop:tauL} and Lemma \ref{lemma:crtau}, if there is a sequence  (\ref{eq:pchainL}) 
of decreasing labels, then the number $p$ is uniquely fixed by $L_{\min}$ and $L$.
Therefore, $\rho$ is a rank function on the poset $(\mathcal{S}(\tau),\lessdot)$.

We will show that there exits a unique join and meet for given two sequences 
$\tau$ and $\tau'$.
Let $\tau:=(\tau_1,\ldots,\tau_n)$ and $\tau':=(\tau'_1,\ldots,\tau'_{n})$ be two 
sequences.
The join $\tau\vee\tau'$ is given by the following algorithm:
\begin{enumerate}
\item Set $j=n$.
\item If $\tau_{j}>\tau'_{j}$, take a pair $(i,j)$ such that $\tau_j\ge\tau_{i}+2$ and 
$\tau_{k}\ge\tau_{j}$ for $i+1\le k\le j-1$.
We have an element $\tau''$ such that $\tau\xrightarrow{(i,j)}\tau''$. 
Replace $\tau$ by $\tau''$ and repeat this procedure until we have $\tau_j=\tau'_{j}$.
If $\tau_{j}=\tau'_{j}$, then go to (3).
\item Decrease $j$ by one, and go to (2). The algorithm stops when $j=1$.
\end{enumerate}
This algorithm is well-defined since Lemma \ref{lemma:crtau} insures at least one 
pair $(i,j)$ if $\tau_{j}>\tau'_{j}$, and Lemma \ref{lemma:ij} insures at most one 
$(i,j)$ if it exists. Further, Lemma \ref{lemma:tauone} implies that if we have 
a join of two elements, then the join is unique.

We will show that there exists a unique meet of $\tau$ and $\tau'$.
Let $L(\tau)$ and $L(\tau')$ be decreasing labels corresponding to $\tau$ and $\tau'$.
We define $^{T}L(\tau)$ to be a mirror image of $L(\tau)$ along the vertical line.
Let $\nu$ and $\nu'$ be two sequences such that $^{T}L(\tau)=L(\nu)$ and $^{T}L(\tau')=L(\nu')$.
From Lemma \ref{lemma:crtau}, we have $\tau\lessdot\tau'\Leftrightarrow \nu'\lessdot\nu$.
Therefore, the join of $\nu$ and $\nu'$ is the meet of $\tau$ and $\tau'$.
Since the join of $\nu$ and $\nu'$ exists and it is unique, the meet of $\tau$ and $\tau'$ 
exists and it is unique. This completes the proof.
\end{proof}

\section{Rational Dyck tilings}
\label{sec:rDT}
\subsection{Generalized Dyck paths}
Let $\lambda$ be a Dyck path of size $n$. Recall that we have an expression 
of $\lambda$ in terms of a binary word consisting of $U$ and $D$.
We define two operations $\mathbb{U}^{k}$ and $\mathbb{D}^{k}$ on a binary word $\lambda$.
The operation $\mathbb{U}^{k}$ (resp. $\mathbb{D}^{k}$) replaces 
$U$ (resp. $D$) by $U^{k}$ (resp. $D^k$) in $\lambda$.
For example, we have $\mathbb{D}^{2}(UDU^2D^2)=UD^2U^2D^4$.
Note that we have $\mathbb{U}^{a}(\mathbb{D}^{b}(\lambda))=\mathbb{D}^{b}(\mathbb{U}^{a}(\lambda))$
for an up-right lattice path $\lambda$.

An $(a,b)$-Dyck path of size $n$ is a lattice path from $(0,0)$ to $(bn,an)$ such that it never 
goes below the line $y=bx/a$. 
We denote by $\mathcal{P}^{(a,b)}_{n}$ the set of $(a,b)$-Dyck paths of size $n$.
Especially, two classes of generalized Dyck paths, $(1,k)$-Dyck paths and $(k,1)$-Dyck paths,
are fundamental cases to consider $(a,b)$-Dyck paths in general. 
For example, we have twelve $(1,2)$-Dyck paths of size three:
\begin{align}
\label{eq:12Dyck3}
\begin{aligned}
&UDDUDDUDD \qquad UDDUDUDDD \qquad UDDUUDDDD \qquad UDUDDDUDD \\
&UDUDDUDDD \qquad UDUDUDDDD \qquad UDUUDDDDD \qquad UUDDDDUDD \\
&UUDDDUDDD \qquad UUDDUDDDD \qquad UUDUDDDDD \qquad UUUDDDDDD
\end{aligned}
\end{align}

We consider the subset $\mathbb{D}^{k}_{n}:=\{P: P=\mathbb{D}^{k}(\lambda), \lambda\in\mathcal{P}^{(1,1)}_{n} \}$ 
of $\mathcal{P}^{(1,k)}_{n}$.
Similarly, we define $\mathbb{U}_{n}^{k}:=\{P: P=\mathbb{U}^{k}(\lambda), \lambda\in\mathcal{P}^{(1,1)}_{n}\}$
as the set of $(k,1)$-Dyck paths.
The cardinality $|\mathbb{D}^{k}_{n}|=|\mathbb{U}^{k}_{n}|$ is equal to the Catalan number by its definition.
For example, we have five $(1,2)$-Dyck paths in Eq. (\ref{eq:12Dyck3}) which are in $\mathbb{D}^{2}_{3}$.
More generally, we define the set of $(a,b)$-Dyck paths  
$\mathbb{UD}^{(a,b)}_{n}:=\{P: P=\mathbb{U}^{a}(\mathbb{D}^{b}(\lambda)), \lambda\in\mathcal{P}^{(1,1)}_{n}\}$.

As in the case of Dyck paths, one can consider a Dyck tiling on these generalized 
Dyck paths.
In this paper, we only consider rational Dyck tilings above a generalized Dyck path
in $\mathbb{UD}^{(a,b)}_{n}$. Other Dyck tilings above an $(a,b)$-Dyck path which 
is not in $\mathbb{UD}^{(a,b)}_{n}$ do not behave nicely compared to  the case of 
$\mathbb{UD}^{(a,b)}_{n}$.
Recall that a Dyck tile of size $n\ge1$ is characterized by a Dyck path of size $n$.
Similarly, an $(a,b)$-Dyck tile of size $n$ is characterized by an $(a,b)$-Dyck path.
Since the bottom path of a Dyck tiling is in $\mathbb{UD}^{(a,b)}_{n}$, the Dyck path 
which characterizes a non-trivial Dyck tile is also in $\mathbb{UD}^{(a,b)}_{n}$.
In this way, one can have an $(a,b)$-Dyck tiling above an $(a,b)$-Dyck path.
Let $D^{(a,b)}$ be a Dyck tile of size $n\ge0$.
Then, we define the weight by
\begin{align}
\label{eq:defwtab}
\mathrm{wt}_{(a,b)}(D^{(a,b)}):=(a+b-1)n+1.
\end{align}
Let $\mathcal{D}$ be a set of $(a,b)$-Dyck tilings.
The generating function of $(a,b)$-Dyck tilings is defined to be
\begin{align*}
Z(\mathcal{D}):=
\sum_{D\in\mathcal{D}}q^{\mathrm{wt}_{(a,b)}(D)}.
\end{align*}

We first study the relation between $(1,k)$- and $(k,1)$-Dyck tilings and labeled trees.
Then, we study $(a,b)$-Dyck tilings through $(a,1)$- and $(1,b)$-Dyck tilings.

\subsection{\texorpdfstring{$(1,k)$}{(1,k)}-Dyck tilings}
$(1,k)$-Dyck tilings first appeared as a generalization of Dyck tilings in \cite{JVK16}.
In the following sections, we focus on $(1,b)$-Dyck tilings since they play a central 
role when we consider an $(a,b)$-Dyck tiling.

\subsubsection{Cover relation}
\label{sec:1kCL}
Let $\lambda$ be a Dyck path of size $n$ and $T$ a tree corresponding to $\lambda$.
We define $\lambda^{(k)}:=\mathbb{D}^{k}(\lambda)$, and $T^{(k)}$ as a tree which 
is obtained from $T$ by dividing an edge of $T$ into $k$ smaller edges.
By construction, $T^{(k)}$ has $kn$ edges.
Note that if $\lambda^{(k)}\in\mathcal{P}^{(1,k)}_{n}\setminus\mathbb{D}^{k}_{n}$, 
then we can not have a tree by uniformly dividing an edge of $T$ into $k$ smaller edges.

Since the number of up steps in $\lambda^{(k)}$ is $n$, each up step is bijective 
to an edge in $T$, and the set of $k$ edges in $T^{(k)}$.

Consider a Dyck tiling above $\lambda^{(k)}$ such that it contains only trivial 
Dyck tiles. 
Let $\mu:=(\mu_{1},\mu_2,\ldots,\mu_n)$ be an integer sequence where $\mu_i$ is 
$(n-i)k$ minus the number of Dyck tiles associated to the $i$-th up step from top.
We define the set $S_{i}$ of $k$ integers for $1\le i\le n$ from $\mu_{i}$ 
as follows.
\begin{enumerate}[(S1)]
\item Set $S=[1,nk]$ and $i=1$.
\item Let $j_1$ be the $\mu_{i}+1$-th smallest element in $S$. 
Then, $S_{i}$ is the set of $k$ successive increasing integers in $S$ starting 
from $j_1$. Namely, ``$j_i,j_{i+1}\in S_{i}$" $\Leftrightarrow$ ``an integer 
$j'\in[j_{i}+1,j_{i+1}-1]$ is in $\bigcup_{1\le l\le i}S_{l}$."
\item Replace $S$ by $S\setminus S_{i}$ and increase $i$ by one.
Then, go to (S2). The algorithm stops when $i=n$.
\end{enumerate}

By construction, one can assign the set $S_{i}$ to the $i$-th up step in $\lambda^{(k)}$.
Recall that each up step corresponds to an edge $e$ in $T$, and equivalently to the 
edges $e_{1},\ldots,e_{k}$ in $T^{(k)}$.
Therefore, we assign the integers in $S_{i}$ to the edges $e_1,\ldots,e_{k}$ such that 
$T^{(k)}$ is decreasing from the root to leaves.
This defines a decreasing label $L^{(k)\downarrow}$ on $T^{(k)}$.

Since $L^{(k)\downarrow}$ is decreasing, one can define a sequence of integers 
$\tau:=\tau(L^{(k)\downarrow})$ by Eq. (\ref{eq:deftau}).
We take a notation that the right-most step in $\lambda^{(k)}$ corresponds to the left-most edge 
in the planar rooted tree.
Below, we define a cover relation on the set of decreasing labels by use of 
$\tau$ instead of the label $L^{(k)\downarrow}$ itself.

Given a set $V=\{v_1,v_2,\ldots,v_{n}\}$ of integers, we define
\begin{align*}
\overline{V}:=\{nk+1-v_i |1\le i\le n \}.
\end{align*}

Let $\tau$ and $S_{i}, 1\le i\le n$, be a sequence and the sets as above.
We say that $\{\tau_{j_1},\tau_{j_2},\ldots,\tau_{j_k}\}$ is a block 
if $\{j_1,j_2,\ldots,j_{k}\}=\overline{S_{i}}$ for some $i\in[1,n]$.
If $\{\tau_{j_1},\tau_{j_2},\ldots,\tau_{j_k}\}$ is a block in $\tau$,
$\tau_q\ge\tau_{j_k}$ for $q\in[p+1,j_{k}-1]\setminus\{j_1,j_2,\ldots,j_{k}\}$, 
and $\tau_{p}\le \tau_{j}$,
we construct a new sequence $\tau'$ by 
\begin{enumerate}
\item $\tau'_{p}=\tau_{j_1}-2$,
\item $\tau'_{j_r}=\tau_{j_{r+1}}-2$ for $1\le r\le k-1$,
\item $\tau'_{j_{k}}=\tau_{p}$,
\item $\tau'_{s}=\tau_s$ for $s\in[1,nk]\setminus\{p,j_1,\ldots,j_{k}\}$.
\end{enumerate}
We say that $\{\tau'_{p},\tau'_{j_1},\tau'_{j_2},\ldots,\tau'_{j_{k-1}}\}$ 
is a block in $\tau'$.
Similarly, $\{\tau'_{j_{k}},\tau'_{p_1},\ldots,\tau'_{p_{k-1}}\}$ is a block 
in $\tau'$ if $\{\tau_{p},\tau_{p_1},\ldots,\tau_{p_{k-1}}\}$ is a block 
in $\tau$. 
When $\tau'$ is obtained from $\tau$ by the above process, 
we say that $\tau'$ covers $\tau$ and write it by $\tau\lessdot\tau'$.

As in the case of Dyck tilings, i.e., $k=1$ case, one can easily show that 
this cover relation on $\tau$ defines a poset of $(1,k)$-Dyck tilings.
By generalizing the proof of Proposition \ref{prop:lattice} to the case 
of $(1,k)$-Dyck tilings, this poset is shown to be a graded bounded 
lattice.
A sequence $\tau$ for a $(1,k)$-Dyck tiling naturally defines a rooted 
plane tree with $nk$ edges and its label. Therefore, by translating 
$\tau$ into a label, one can describe the cover relation on $\tau$ 
in terms of the label.
The main difference is that we involve $k+1$ edges to define 
the cover relation on the label. 
Note that in Definition \ref{def:coverL}, we involve two 
edges $e_{l}$ and $e_r$ to define the cover relation on a label.

\begin{example}
\label{ex:1k}
We consider $(2,1)$-Dyck tilings of size $3$.
We have a following lattice whose minimal element is $\tau=(0,2,3,6,7,1)$:
\begin{center}
\tikzpic{-0.5}{
\node (0) at (0,0) {$(0,2,3,6,7,1)$};
\node (1) at (3,1){$(0,1,0,6,7,1)$};
\node (2) at (3,-1){$(0,2,4,5,3,1)$};
\node (3) at (6,1){$(0,1,4,5,0,1)$};
\node (4) at (6,-1){$(0,2,3,2,3,1)$};
\node (5) at (9,1){$(0,2,3,1,0,1)$};
\node (6) at (9,-1){$(0,1,0,2,3,1)$};
\node (7) at (12,0){$(0,1,0,1,0,1)$};
\draw[-](0)--(1)(0)--(2)(1)--(3)(2)--(3)(2)--(4)(3)--(5)(4)--(5)(4)--(6)(5)--(7)(6)--(7);
}
\end{center}
For example, the sequences $(0,2,3,6,7,1), (0,1,0,6,7,1)$ and $(0,2,3,2,3,1)$
correspond to $(1,2)$-Dyck tilings 
\begin{align*}
\tikzpic{-0.5}{[scale=0.6]
\draw[very thick](0,0)--(0,1)--(2,1)--(2,2)--(4,2)--(4,3)--(6,3);
\draw(1,1)--(1,3)--(4,3)(1,2)--(2,2)--(2,3)(3,3)--(3,2);
\draw(4,2.5)node[anchor=west]{$\{2,3\}$};
\draw(2,1.5)node[anchor=west]{$\{4,5\}$};
\draw(0,0.5)node[anchor=west]{$\{1,6\}$};
} \quad
\tikzpic{-0.5}{[scale=0.6]
\draw[very thick](0,0)--(0,1)--(2,1)--(2,2)--(4,2)--(4,3)--(6,3);
\draw(1,1)--(1,3)--(4,3);
\draw(4,2.5)node[anchor=west]{$\{2,3\}$};
\draw(2,1.5)node[anchor=west]{$\{5,6\}$};
\draw(0,0.5)node[anchor=west]{$\{1,4\}$};
} \quad
\tikzpic{-0.5}{[scale=0.6]
\draw[very thick](0,0)--(0,1)--(2,1)--(2,2)--(4,2)--(4,3)--(6,3);
\draw(1,1)--(1,2)--(2,2)(3,2)--(3,3)--(4,3);
\draw(4,2.5)node[anchor=west]{$\{4,5\}$};
\draw(2,1.5)node[anchor=west]{$\{2,3\}$};
\draw(0,0.5)node[anchor=west]{$\{1,6\}$};
} 
\end{align*}
A set of integers attached to the $i$-th up step from top is $S_{i}, 1\le i\le 3$. 

In this lattice, we have seven $(1,k)$-Dyck tilings which consist of only trivial 
Dyck tiles, and there is one Dyck tiling consisting of a non-trivial Dyck tile.
This unique Dyck tiling is corresponding to $\tau=(0,1,0,6,7,1)$.
\end{example}

\subsubsection{$k$-Stirling permutations}
In the case of $k=1$, we identify a sequence $\tau$ with a permutation 
in $[n]$.
For general $k$, we identify $\tau$ with a $k$-Stirling permutation 
instead of a permutation.
The correspondence between a $(1,k)$-Dyck tiling and a $k$-Stirling 
permutation appeared in \cite{JVK16}.

A {$k$-Stirling} permutation $s$ satisfies 
\begin{enumerate}[(s1)]
\item $s$ is a permutation of $\{1^k,2^k,\ldots,n^k\}$. 
\item Suppose $s_p=s_q$ for some $p<q$. Then, we have $s_{r}\ge s_{p}$
for $r\in[p+1,q-1]$.
\end{enumerate}
Suppose that $S_{i}=\{j_1,j_2,\ldots,j_{k}\}$.
Then, we define 
\begin{align}
\label{eq:defs}
s_{j_{m}}:=n+1-i, \quad 1\le m\le k.
\end{align}
The inverse map from $s$ to $S_{i}, 1\le i\le n$ is given by 
\begin{align*}
S_{i}=\{1\le p\le nk : s_{p}=n+1-i\}.
\end{align*}

\begin{remark}
When $S_{i}=\{(i-1)k+1,(i-1)k+2\ldots, ik\}$ for all $1\le i\le n$,
we have 
\begin{align*}
s=(\underbrace{n,\ldots,n}_{k},\underbrace{n-1,\ldots,n-1}_{k},\ldots,\underbrace{1,\ldots,1}_{k}).
\end{align*}
This choice of notation is compatible with a decreasing label.
\end{remark}

\begin{prop}
The definition (\ref{eq:defs}) is well-defined, i.e., $s$ is 
a $k$-Stirling permutation.
\end{prop}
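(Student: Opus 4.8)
The plan is to check the two defining properties (s1) and (s2) of a $k$-Stirling permutation directly from the algorithm (S1)--(S3), after first verifying that the algorithm is well-defined and that the blocks $S_{1},\ldots,S_{n}$ form a set partition of $[1,nk]$ into $n$ blocks of size $k$. At the beginning of step $i$ the current set $S$ has $nk-(i-1)k=(n-i+1)k$ elements, so the $(\mu_{i}+1)$-th smallest element $j_{1}$ exists and there remain at least $k$ elements of $S$ that are $\ge j_{1}$ exactly when $\mu_{i}\le (n-i+1)k-k=(n-i)k$. Thus well-definedness reduces to the inequalities $0\le\mu_{i}\le (n-i)k$: the upper bound is immediate since the number of trivial tiles associated to the $i$-th up step from the top is nonnegative, and the lower bound holds because that number cannot exceed $(n-i)k$, the horizontal room between $\lambda^{(k)}$ and the maximal path $U^{n}D^{nk}$ available to the $i$-th up step from the top. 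Granting this, the sets $S_{i}$ partition $[1,nk]$, and since (\ref{eq:defs}) assigns the value $n+1-i$ to precisely the $|S_{i}|=k$ positions of $S_{i}$, each of $1,\ldots,n$ occurs exactly $k$ times in $s$; this is (s1).

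For (s2), the key point is the ``gap-filling'' clause built into (S2): if $S_{i}=\{j_{1}<j_{2}<\cdots<j_{k}\}$ is the block chosen at step $i$, then $j_{1},j_{2},\ldots,j_{k}$ are successive elements of the current set $S$, so the elements of $S$ lying in the interval $[j_{1},j_{k}]$ are exactly $j_{1},j_{2},\ldots,j_{k}$. Hence any integer $r$ with $j_{1}<r<j_{k}$ is either one of $j_{2},\ldots,j_{k-1}$, or it does not belong to $S$ at step $i$, in which case it was removed at some earlier step, i.e. $r\in S_{l}$ for some $l<i$. Now suppose $s_{p}=s_{q}=n+1-i$ with $p<q$; then $p,q\in S_{i}$, hence $j_{1}\le p<q\le j_{k}$, and any $r$ with $p<r<q$ satisfies $j_{1}<r<j_{k}$. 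By the observation, either $r\in S_{i}$ and $s_{r}=n+1-i=s_{p}$, or $r\in S_{l}$ with $l<i$ and $s_{r}=n+1-l>n+1-i=s_{p}$. In both cases $s_{r}\ge s_{p}$, which is precisely condition (s2).

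I expect the only genuine bookkeeping to be the range bound $0\le\mu_{i}\le(n-i)k$ underlying well-definedness in the first paragraph; once one fixes the geometric meaning of ``the $i$-th up step from the top'' for a trivial tiling above $\lambda^{(k)}$, this is routine. Condition (s2) is then a purely formal consequence of the ``$k$ successive elements of $S$'' clause in (S2) and needs no computation. Combining (s1) and (s2) shows that $s$ is a $k$-Stirling permutation, proving the proposition.
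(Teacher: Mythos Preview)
Your proof is correct and follows the same approach as the paper: verify (s1) from the fact that the $S_i$ partition $[1,nk]$, and deduce (s2) from the ``successive elements'' clause in (S2). The paper's own proof is terser (it simply asserts that (s2) is obvious from (S2)), whereas you spell out the key observation that any $r$ strictly between $j_1$ and $j_k$ either lies in $S_i$ or was removed at an earlier stage $l<i$, and you add the preliminary check $0\le\mu_i\le(n-i)k$ ensuring the algorithm (S1)--(S3) is well-defined; both additions are welcome but do not change the argument's structure.
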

%%%%%%%%%%%%
\begin{proof}
Let $\{S_{i} : 1\le i\le n\}$  be the collection of the sets defined in Section \ref{sec:1kCL}.
The construction of $s$ from $S_{i}, 1\le i\le n$, implies that $s$ is a permutation of 
the set $\{1^k,2^k,\ldots,n^k\}$. Thus, it satisfies the condition (s1).
Since we construct $s$ by Eq. (\ref{eq:defs}) and we have the condition (S2) in the 
definition of $S_{i}$, it is obvious that $s$ satisfies the condition (s2).
This completes the proof.
\end{proof}

Let $s$ be a $k$-Stirling permutations satisfying (s1) and (s2).
Let $j(i)$ be the smallest integer such that $s_{j(i)}=i$ for $1\le i\le n$.
We define $\mu'_{i}, 1\le i\le n$, by
\begin{align}
\label{eq:defmudash}
\mu'_{i}:=\{1\le p\le j(i) : s_{p}<s_{j(i)}\}-1.
\end{align}
Then, we obtain a sequence of non-negative integers $\mu':=(\mu'_1,\mu'_2,\ldots,\mu'_{n})$.

\begin{prop}
Let $\mu$ be a sequence of integers as in Section \ref{sec:1kCL} and $\mu'$ as above.
Then, we have $\mu=\mu'$.
\end{prop}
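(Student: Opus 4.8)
The plan is to show that the recipe $s\mapsto\mu'$ of (\ref{eq:defmudash}) is a left inverse of the construction $\mu\mapsto\{S_i\}_{1\le i\le n}\mapsto s$ obtained by running the algorithm (S1)--(S3) and then applying (\ref{eq:defs}). Since this latter construction is exactly how $s$ was produced from $\mu$ in the first place, establishing the left-inverse property is precisely the assertion $\mu=\mu'$.

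The first step is to record the structural fact that makes the argument run. Write $S^{(i)}$ for the value of the working set $S$ at the start of the $i$-th execution of step (S2), so that $S^{(1)}=[1,nk]$ and $S^{(i+1)}=S^{(i)}\setminus S_i$. I claim that $S^{(i)}$ is precisely the set of positions $p$ with $s_p\le n+1-i$, i.e. the positions occupied in $s$ by the values $1,2,\dots,n+1-i$. This follows by induction on $i$: the case $i=1$ is immediate, and the inductive step invokes (\ref{eq:defs}), which says that $S_i$ is exactly the set of positions carrying the value $n+1-i$, so deleting $S_i$ from $S^{(i)}$ leaves the positions carrying values $\le n-i=n+1-(i+1)$.

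Granting this, the proposition reduces to a comparison of cardinalities. At the $i$-th pass, the element $j_1$ chosen in (S2) is on the one hand the $(\mu_i+1)$-st smallest element of $S^{(i)}$, so there are exactly $\mu_i$ elements of $S^{(i)}$ strictly below $j_1$; on the other hand $j_1=\min S_i$ is the first position of $s$ carrying the value $n+1-i$, which in the notation of (\ref{eq:defmudash}) is $j(n+1-i)$, and $s_{j(n+1-i)}=n+1-i$. Combining this with the structural fact, $\mu_i$ equals the number of positions $p<j_1$ with $s_p\le n+1-i$; and since no position below $j_1=j(n+1-i)$ can itself carry the value $n+1-i$, this coincides with the number of positions $p\le j_1$ with $s_p<n+1-i$, equivalently the offset of the first occurrence of the value $n+1-i$ among the positions of $s$ carrying no larger value. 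Reading this count off through (\ref{eq:defmudash}) identifies it with the corresponding entry of $\mu'$, which yields $\mu=\mu'$.

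The only delicate point is the bookkeeping among the three orderings in play: the labelling of the up steps from the top, the order $i=1,\dots,n$ in which (S1)--(S3) removes the blocks $S_i$, and the values $n+1-i$ that (\ref{eq:defs}) attaches to those blocks; these must be kept aligned throughout the cardinality count, and this is where a sign or an index shift is easiest to mishandle. Beyond that, the only feature of the blocks $S_i$ that enters the argument is that each consists of $k$ elements successive within $S^{(i)}$ and starting at $j_1$, which is used solely to guarantee $j_1=\min S_i$; their internal structure plays no further role.
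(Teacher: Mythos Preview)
Your argument is correct and follows the same line as the paper's proof, only with the structural fact about the working set $S^{(i)}$ and the cardinality comparison made explicit where the paper simply says ``it is obvious.'' The key identification $j_1=\min S_i=j(n+1-i)$ and the count $\mu_i=\#\{p<j_1:s_p\le n+1-i\}=\#\{p\le j_1:s_p<n+1-i\}$ is exactly what the paper is invoking.

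One point worth tightening: you write that the count ``identifies with the corresponding entry of $\mu'$,'' but you stop short of saying which entry. Your own computation shows $\mu_i$ matches the quantity $\#\{p\le j(n+1-i):s_p<n+1-i\}$, so the entry of $\mu'$ you are hitting is the one attached to the value $n+1-i$, not to the index $i$. The paper's own indexing conventions in (\ref{eq:defmudash}) are a bit loose here (the ``$-1$'' in that display is also awkward as written), and its proof does not spell out the alignment either; but since you explicitly flag the bookkeeping among the three orderings as the delicate point, it would strengthen your writeup to state the matching entry explicitly rather than leaving it implicit. Substantively, though, there is no gap: once the structural description of $S^{(i)}$ is in hand, the identification of the two counts is immediate, and your proof and the paper's coincide.
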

%%%%%%%%%
\begin{proof}
Let $S_{i}=\{j_1<j_2<\ldots<j_{k}\}$.
The relation (\ref{eq:defs}) between a $k$-Stirling permutation and a collection of 
the sets $S_{i}$ implies that we have $s_{j_1}=n+1-i$ and $s_{j_1}$ is the left-most 
$n+1-i$ in $s$. 
Since the number $\mu'_{i}$ counts the number of $p$ satisfying (\ref{eq:defmudash}), 
it is obvious that we have  $\mu_{i}=\mu'_{i}$.
From these, we have $\mu=\mu'$.	
\end{proof}

\subsubsection{Decompositions}
We decompose a $(1,k)$-Dyck tiling into $k$ Dyck tilings.
The decomposition of a $(1,k)$-Dyck path above $\lambda^{(k)}$ into $k$ Dyck paths 
is studied in \cite{Shi21b}. We generalize this decomposition to the case of $(1,k)$-Dyck 
tilings which contain non-trivial Dyck tiles following \cite{Shi21c,Shi21b}.

Let $\mu=(\mu_1,\ldots, \mu_{n})$ be an integer sequence as in Section \ref{sec:1kCL}.
We construct $k$ integer sequences $\alpha_{1},\ldots,\alpha_{k}$ from 
$\mu$ as follows \cite{Shi21c,Shi21b}:
\begin{align}
\label{eq:dec}
\alpha_{i}:=\left\lfloor\genfrac{}{}{}{}{\mu-\sum_{j=1}^{i-1}\alpha_{j}}{k+1-i}\right\rfloor,
\end{align}
where $\lfloor x\rfloor$ is the floor function.
Recall that $\mu$ is defined above a $(1,k)$-Dyck path $\mathbb{D}^{k}(\lambda)$.
Since we consider only $(1,k)$-Dyck paths of the form $\mathbb{D}^{k}(\lambda)$,
we have $k$ Dyck tilings above the same Dyck path $\lambda$, and the top paths are characterized 
by $\alpha_{i}$, $1\le i\le k$.

We write $\alpha\subseteq_{H}\beta$ if two sequences $\alpha:=(\alpha_1,\ldots,\alpha_n)$ and 
$\beta:=(\beta_1,\ldots,\beta_n)$ satisfy 
\begin{align}
\label{eq:alphabeta}
\beta_i-1\le \alpha_i\le \beta_i.
\end{align}
Then, by construction of $\alpha_{i}, 1\le i\le k$, it is obvious that 
the sequences $\alpha_i$ satisfy  
\begin{align}
\label{eq:alphaad}
\alpha_i\subseteq_{H}\alpha_j, \quad\text{ for } i<j.
\end{align}
Given $k$ integers sequence $\{\alpha_i: 1\le i\le k\}$, we say that 
the sequences are {\it admissible} if they satisfy the condition 
(\ref{eq:alphaad}).

Since each $\alpha_i$ defines a Dyck tiling $D_{i}$ above the path $\lambda$,
the condition (\ref{eq:alphaad}) immediately implies that the decreasing labels $L_{i}^{\downarrow}$ 
for the Dyck tilings $D_{i}$ satisfy
\begin{align}
\label{eq:Ldad}
L_1^{\downarrow}\le L_2^{\downarrow}\le\ldots\le L_k^{\downarrow}. 
\end{align}
Actually, the condition (\ref{eq:alphaad}) is stronger than the 
condition (\ref{eq:Ldad}).

\begin{example}
The following $(1,3)$-Dyck tiling can be decomposed into 
three Dyck tilings.
\begin{center}
\tikzpic{-0.5}{[scale=0.5]
\draw[very thick](0,0)--(0,1)--(3,1)--(3,2)--(6,2)--(6,3)--(9,3);
\draw(1,1)--(1,2)--(3,2)--(3,3)(2,1)--(2,3)--(6,3)(4,2)--(4,3)(5,2)--(5,3);
}
\quad $\Longleftrightarrow$\quad 
\tikzpic{-0.5}{[scale=0.5]
\draw[very thick](0,0)--(0,1)--(1,1)--(1,2)--(2,2)--(2,3)--(3,3);
\draw(1,2)--(1,3)--(2,3);
\draw(0.5,0.5)node{$3$}(1.5,1.5)node{$1$}(2.5,2.5)node{$2$};
}
$\subseteq$\tikzpic{-0.5}{[scale=0.5]
\draw[very thick](0,0)--(0,1)--(1,1)--(1,2)--(2,2)--(2,3)--(3,3);
\draw(0,1)--(0,2)--(1,2)--(1,3)--(2,3);
\draw(0.5,0.5)node{$1$}(1.5,1.5)node{$3$}(2.5,2.5)node{$2$};
}
$\subseteq$
\tikzpic{-0.5}{[scale=0.5]
\draw[very thick](0,0)--(0,1)--(1,1)--(1,2)--(2,2)--(2,3)--(3,3);
\draw(0,1)--(0,3)--(2,3)(0,2)--(1,2)--(1,3);
\draw(0.5,0.5)node{$1$}(1.5,1.5)node{$2$}(2.5,2.5)node{$3$};
}
\end{center}
The decreasing labels corresponding to three Dyck tilings satisfy 
$213\le 231\le 321$. 
Here, we read the labels in a tree from right to left.
\end{example}

\subsection{\texorpdfstring{$(k,1)$}{(k,1)}-Dyck tilings}
As in the case of $(1,k)$-Dyck tilings, we set $\nu^{(k)}:=\mathbb{U}^{k}(\lambda)$ 
and $T^{(k)}$ as a tree with $nk$ edges.
Instead of a decreasing label, we consider an increasing label $L^{(k)\uparrow}$ on
$T^{(k)}$.
Suppose that a Dyck tiling above $\nu^{(k)}$ contains only trivial Dyck tiles.
Let $\xi_{i}$ be the number of Dyck tiles associated to the $i$-th down step 
from left to right. 
Here, we count the number of Dyck tiles by a vertical Hermite history.
We define the set $Q_{i}$ of $k$ integers for $1\le i\le n$ from $\xi_{i}$
as follows:
\begin{enumerate}
\item Set $Q:=[nk]$ and $i=1$.
\item Let $x_{i}$ be the $\xi_{i}+1$-th largest element in $Q$.
Then, $Q_{i}$ is the set of $k$ successive decreasing integers from $x_{i}$ 
in $Q$. Namely, ``$x_j,x_{j+1}\in Q_{i}$" $\Rightarrow$ ``$y\in[x_{j+1}+1,x_{j}-1]$ is 
in $\bigcup_{1\le l\le i}Q_{l}$."
\item  Replace $Q$ by $Q\setminus Q_{i}$, and increase $i$ by one. Then go to (2).
The algorithm stops when $i=n$.
\end{enumerate}
As in the case of $(1,k)$-Dyck tilings, we have an increasing label $L^{(k)\uparrow}$ on $T^{(k)}$.
We take a notation that the left-most edge in $\nu^{(k)}$ corresponds to 
the left-most edge in $T^{(k)}$.

Given an increasing label $L^{(k)\uparrow}$, we construct an integer sequence $\eta:=\eta(L^{(k)\uparrow})$ 
as follows.
\begin{align*}
\eta_i:=2\cdot \#\{j<i : e(j)\rightarrow e(i) \}+\#\{j<i: e(i)\uparrow e(j)\},
\end{align*}
where $e(i)$ is the edge in $L^{(k)\uparrow}$ whose label is $i$.

Then, we define a covering relation on the set of sequences $\eta$.
Let $Q_{i}, 1\le i\le n$, be the sets as above.
We say that $\eta_{j_1},\ldots,\eta_{j_{k}}$ is a block if $\{j_1,\ldots,j_{k}\}=Q_{i}$ for some 
$i\in[n]$.
Suppose that $\eta_{j_1},\eta_{j_2},\ldots,\eta_{j_{k}},\eta_{p}$ satisfy
\begin{enumerate}
\item $j_1<j_2<\ldots<j_{k}<p$ and $\eta_{j_1},\ldots,\eta_{j_{k}}$ is a block in $\eta$.
\item $\eta_{q}\ge\max\{\eta_{j_1},\ldots,\eta_{j_{k}},\eta_{p}\}$
for all $q\in[j_1,p]\setminus\{j_1,\ldots,j_{k},p\}$.
\item $\eta_{p}\ge \max\{\eta_{j_1},\ldots,\eta_{j_{k}}\}$ and $\eta_{p}\ge 2k$.
\end{enumerate}
Then, we define a new sequence $\eta'$ of $nk$ integers by 
\begin{enumerate}
\item $\eta'_{j_1}=\eta_{p}-2k$,  
\item $\eta'_{j_r}=\eta_{j_{r-1}}$ for $r\in[2,k]$ and $\eta'_{p}=\eta_{j_{k}}$,
\item $\eta'_{q}=\eta_q$ for all $q\in[nk]\setminus\{j_1,\ldots,j_k,p\}$.
\end{enumerate}
When $\eta'$ is obtained from $\eta$ by the above process, we say that 
$\eta'$ covers $\eta$ and write it by $\eta\lessdot\eta'$.

This cover relation $\lessdot$ naturally defines a poset on the 
sequences $\eta$. As in the case of $(1,k)$-Dyck tiling, 
one can show that this poset is a graded bounded lattice. 
In addition, one can translate the cover relation in terms 
of a label on a $(k,1)$-Dyck path $\mathbb{U}^{k}(\lambda)$.

\begin{example}
\label{ex:k1}
Set $k=2$. We consider the trivial $(2,1)$-Dyck tiling above $(U^2D)^3$ and below $U^5D^2UD$. 
This Dyck tiling gives the integer sequence $(0,0,1,4,5,9)$.
We have the following lattice on $(2,1)$-Dyck tilings.
\begin{center}
\tikzpic{-0.5}{
\node(0)at(0,0){$(0,0,1,4,5,9)$};
\node (1) at (3,1){$(0,0,1,5,4,5)$};
\node (2) at (3,-1){$(0,0,0,1,5,9)$};
\node (3) at (6,1){$(0,1,0,1,4,5)$};
\node (4) at (6,-1){$(0,0,1,0,1,9)$};
\node (5) at (9,1){$(0,1,0,0,1,5)$};
\node (6) at (9,-1){$(0,0,1,5,0,1)$};
\node (7) at (12,0){$(0,1,0,1,0,1)$};
\draw[-](0)--(1)(0)--(2)(1)--(3)(2)--(3)(2)--(4)(3)--(5)(4)--(5)(4)--(6)(5)--(7)(6)--(7);
}
\end{center}
Let $\mathbf{Q}(\eta):=(Q_1,Q_2,Q_3)$ be the collection of the sets $Q_{i}, 1\le i\le 3$, 
for $\eta$. In this example, we have
\begin{align*}
\mathbf{Q}((0,0,1,4,5,9))&=\{\{3,2\},\{5,4\},\{6,1\}\}, \\
\mathbf{Q}((0,0,1,5,4,5))&=\{\{3,2\},\{6,5\},\{4,1\}\}, \\
\mathbf{Q}((0,0,1,0,1,9))&=\{\{5,4\},\{3,2\},\{6,1\}\}.	
\end{align*}
\end{example}

As in the case of $(1,k)$-Dyck tilings, we decompose a 
$(k,1)$-Dyck tiling into $k$ Dyck tilings.
We replace $\mu$ by $\xi$ in Eq. (\ref{eq:dec}).
Then, we have $k$ integer sequences $\alpha_{i}, 1\le i\le k$.
We write $\alpha\subseteq_{V}\beta$ if $\alpha:=(\alpha_1,\ldots,\alpha_n)$ and 
$\beta:=(\beta_1,\ldots,\beta_{n})$ satisfy Eq. (\ref{eq:alphabeta}).
Then, we have 
\begin{align*}
\alpha_{i}\subseteq_{V}\alpha_{j},	
\end{align*}
for $i<j$.

\subsection{Duality between \texorpdfstring{$(1,k)$}{(1,k)}- and \texorpdfstring{$(k,1)$}{(k,1)}-Dyck tilings}
Let $D$ be a $(1,k)$-Dyck tiling, and $L^{(k)\downarrow}$ a decreasing 
label corresponding to $D$ on the tree $T^{(k)}$.
We denote by $^{T}D$ the transposed 
$(k,1)$-Dyck tiling. Namely, $^{T}D$ is the mirror image of $D$ 
along the line $y=-x$.
Similarly, we define a transposed decreasing label as $^{T}L^{(k)\downarrow}$.
Let $L^{(k)\uparrow}$ be an increasing label corresponding 
to $^{T}D$ on $T^{(k)}$.

We define an operation $\phi$ which maps a decreasing label to an increasing label.
Let $T$ be a planar rooted tree. 
When the root of $T$ has $m$ edges below it, we decompose $T$ into $m$ small trees 
$T_{1},\ldots,T_{m}$ where $T_{i}$ is a tree whose root has a single edge below it.
We denote it by a concatenation $T=T_{1}\circ T_{2}\circ\cdots\circ T_{m}$.
In other words, $T$ is a concatenation of prime trees $T_i$, $1\le i\le m$.
Suppose that $L_{i}^{\downarrow}$ (resp. $L_{i}^{\uparrow}$) is a decreasing (resp. increasing) label on $T_{i}$.
The map $\phi:L_{i}^{\downarrow}\mapsto L_{i}^{\uparrow}$ is defined as follows.
\begin{enumerate}
\item We first standardize the labels in $L_{i}$. If $L_{i}$ has $n$ edges, 
we replace the $i$-th smallest label $l_{i}$ in $L_{i}$ by $i$. 
\item We replace $i$ by $n+1-i$ and obtain a standardized increasing tree.
\item We destandardize the increasing label, that is, we replace $i$ by $l_{i}$.
\end{enumerate}
Then, we define 
\begin{align}
\label{ex:defphi}	
\phi(T):=\phi(T_{1})\circ\phi(T_{2})\circ\cdots\circ\phi(T_{m}).
\end{align}

Then, $L^{(k)\uparrow}$ and $L^{(k)\downarrow}$ is related as follows.
%%%%%%%%%%%%
\begin{prop}
\label{prop:dual}
In the above notation, we have
\begin{align}
\label{eq:Ldual}
L^{(k)\uparrow}=\phi({\ }^{T}L^{(k)\downarrow}),
\end{align}
where $\phi$ is defined by Eq. (\ref{ex:defphi}).
\end{prop}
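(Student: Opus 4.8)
The plan is to prove the identity (\ref{eq:Ldual}) by induction on the size $n$, following the same inductive scheme used for Proposition \ref{prop:Lupdown} and organizing the argument around the prime-tree decomposition on which $\phi$ is defined. The inductive step splits into two reductions. First, the concatenation property: if $T^{(k)}=T_1^{(k)}\circ\cdots\circ T_m^{(k)}$ is the prime factorization, then in a cover-inclusive $(1,k)$-Dyck tiling distinct prime components of the bottom path never interact, so the vertical Hermite history producing $L^{(k)\downarrow}$, the DTS-type construction producing $L^{(k)\uparrow}$, and the transpose ${}^{T}(\cdot)$ all respect this concatenation (the transpose reversing the order and flipping each factor); since $\phi$ is by definition (\ref{ex:defphi}) componentwise, matching the two sides of (\ref{eq:Ldual}) factor by factor reduces the claim for $T^{(k)}$ to the claim for each prime $T_i^{(k)}$. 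Second, for a prime tree $T^{(k)}$ of size $n$, deleting the top edge-block reduces to a (in general non-prime) tree of size $n-1$, to which the inductive hypothesis applies after the first reduction.

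Concretely, in the second reduction I would delete from $L^{(k)\uparrow}$ the $k$ edges carrying the largest block of labels $\{kn-k+1,\ldots,kn\}$; as $L^{(k)\uparrow}$ is increasing these lie on a ``peak block'' of ${}^{T}D$, so the deletion is exactly the last step of the DTS construction. On the decreasing side, the transpose turns this into the first step of the vertical Hermite history applied to $D$, namely the deletion of the minimal edge-block. Using Proposition \ref{prop:Lupdown} together with the block reformulation of Section \ref{sec:1kCL} (so that one may reason one block of $k$ edges at a time), one checks that these two deletions act at transposed positions, and that the data added back---trivial tiles in the DTS step, Hermite-history values in the other---correspond under the bar operation $\overline{(\cdot)}$ relating the index sets $S_i$ of $D$ to the index sets $Q_i$ of ${}^{T}D$. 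The inductive hypothesis applied to the reduced tiling then yields (\ref{eq:Ldual}), provided one knows that ``reduce by deleting the minimal block'' commutes with $\phi\circ{}^{T}(\cdot)$.

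That last compatibility is the point where the real work lies, and I expect it to be the main obstacle: $\phi$ is not the global bar operation, since it standardizes and destandardizes the labels inside each prime factor, and deleting an edge-block may split a prime factor and relabel the surviving labels. I would isolate this in a lemma describing the effect of $\phi$ under deletion of the minimal edge-block, phrased entirely in terms of the relative order of labels within each component rather than their absolute values, which makes ``destandardize after reversing'' visibly commute with the deletion. As an alternative route that bypasses the inductive bookkeeping, I would in parallel encode $L^{(k)\downarrow}$ by the integer sequence $\tau$ of (\ref{eq:deftau}) and $L^{(k)\uparrow}$ by the analogous sequence $\eta$ of the $(k,1)$-construction, observe that ${}^{T}(\cdot)$ interchanges ``strictly right'' with ``strictly left'' while fixing the ancestor relation $\uparrow$, and verify directly that $\phi$ realizes precisely the permutation of entries forced by this interchange; since $\tau$ and $\eta$ each determine their labels uniquely, equality of the two sequences gives (\ref{eq:Ldual}).
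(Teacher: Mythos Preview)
Your plan is sound, and the concatenation reduction you describe is exactly what the paper uses. The difference lies in how the prime case is handled. You propose an induction on $n$: peel off the extremal block of $k$ edges on each side, match the DTS insertion on ${}^{T}D$ with the Hermite-history deletion on $D$, and appeal to the inductive hypothesis once you have established that block deletion commutes with $\phi\circ{}^{T}(\cdot)$. You correctly flag that commutation as the real work, since $\phi$ is only a bar operation \emph{after} standardization inside each prime factor, and deletion can split a prime factor. This is all workable, but it is heavier than necessary.

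The paper avoids induction entirely by going back to the definitions of the index sets. Recall that $S_i$ is built from $\mu_i=(n-i)k$ minus the number of tiles seen by the \emph{horizontal} Hermite history at the $i$-th up step, whereas $Q_i$ is built from $\xi_i$, the number of tiles seen by the \emph{vertical} Hermite history at the $i$-th down step. Transposing $D$ to ${}^{T}D$ swaps horizontal and vertical Hermite histories, so for a prime factor the two constructions are literally complementary: one counts boxes, the other counts the complement to $(n-i)k$. Step (2) in the definition of $\phi$ (replace $i$ by $n+1-i$ after standardization) is precisely this complementation at the level of labels, and steps (1) and (3) are there only to handle the destandardization within each prime component. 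Thus on each $T_i$ the equality $L^{(k)\uparrow}=\phi({}^{T}L^{(k)\downarrow})$ is immediate from the definitions, with no recursion and no commutation lemma needed. Your inductive route would ultimately rediscover this complementarity one block at a time; the paper's argument exposes it in one step. Your alternative route via the sequences $\tau$ and $\eta$ is closer in spirit to the paper, but even there you would do better to compare $S_i$ and $Q_i$ directly rather than to pass through $\tau$ and $\eta$.
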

%%%%%%%%
\begin{proof}
Since $^{T}D$ is the mirror image of $D$, the increasing label $L^{(k)\uparrow}$
is a label on the transposed tree of $T^{(k)}$.
Suppose $T^{(k)}$ can be decomposed into $m$ subtrees, and denote it 
by $T^{(k)}=T_1\circ\cdots\circ T_{m}$.
When $T_i$ consists of $kr$ edges, we assign a label on the edge in $T_{i}$
from a collection of $r$ sets of integers $\{S_{1},\ldots, S_{r}\}$.
By construction of $S_{i}$, we have a decreasing label on $T_{i}$.
It is obvious that the action of $\phi$ on $T_{i}$ gives an increasing 
label on $T_{i}$.
The set $Q_{i}$ is constructed from the number of Dyck tiles in a vertical 
Hermite history. On the other hand, $S_{i}$ is constructed from $(n-i)k$ 
minus the number of Dyck tiles in a horizontal Hermite history.
Note that since we consider the transposed Dyck tilings, a horizontal Hermite history for 
$S_{i}$ corresponds to a vertical Hermite history for $Q_{i}$.
This implies that $S_{i}$ and $Q_{i}$ is a complement to each other.
This complementarity can be realized in the second step (2) in the definition of 
$\phi$.  
From these, if we construct an increasing label from $Q_{i}$'s,
we obtain $\phi(T_{i})$.
An increasing label can be constructed from $m$ increasing labels by 
a concatenation, we have Eq. (\ref{eq:Ldual}).
\end{proof}

\begin{example}
The Dyck tilings in Examples \ref{ex:1k} and \ref{ex:k1} are dual to 
each other. Thus, the two lattices are isomorphic.

In this case, the tree $T^{(2)}$ can be decomposed into $3$ subtrees which 
have two edges and a unique leaf. Then, the action of $\phi$ on the labeled trees gives 
the following consequence: $S_{i}=\phi(Q_{i})$. Here, $\phi$ simply replaces 
an increasing sequence by a decreasing sequence.  
\end{example}

\begin{example}
We consider the $(1,2)$- and $(2,1)$-Dyck tilings of size four as in Figure \ref{fig:1kk1}.
\begin{figure}[ht]
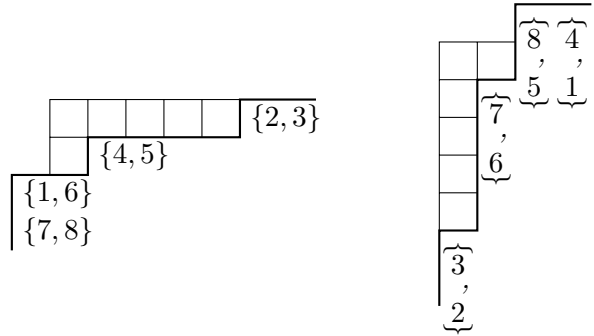

\tikzpic{-0.5}{[scale=0.5]
\draw[thick](0,0)--(0,2)--(2,2)--(2,3)--(6,3)--(6,4)--(8,4);
\draw(1,2)--(1,4)--(6,4)(1,3)--(2,3)--(2,4)(3,3)--(3,4)(4,3)--(4,4)(5,3)--(5,4);
\draw(6,3.5)node[anchor=west]{$\{2,3\}$}(2,2.5)node[anchor=west]{$\{4,5\}$};
\draw(0,1.5)node[anchor=west]{$\{1,6\}$}(0,0.5)node[anchor=west]{$\{7,8\}$};
}
\qquad
\tikzpic{-0.5}{[scale=0.5]
\draw[thick](0,0)--(0,2)--(1,2)--(1,6)--(2,6)--(2,8)--(4,8);
\draw(0,2)--(0,7)--(2,7)(0,3)--(1,3)(0,4)--(1,4)(0,5)--(1,5)(0,6)--(1,6)--(1,7);
\draw(0.5,2)node[anchor=north]{
\rotatebox{-90}{$\{\rotatebox{90}{$3$}\ \rotatebox{90}{\ ,}\ \rotatebox{90}{$2$}\}$}};
\draw(1.5,6)node[anchor=north]{
\rotatebox{-90}{$\{\rotatebox{90}{$7$}\ \rotatebox{90}{\ ,}\ \rotatebox{90}{$6$}\}$}};
\draw(2.5,8)node[anchor=north]{
\rotatebox{-90}{$\{\rotatebox{90}{$8$}\ \rotatebox{90}{\ ,}\ \rotatebox{90}{$5$}\}$}};
\draw(3.5,8)node[anchor=north]{
\rotatebox{-90}{$\{\rotatebox{90}{$4$}\ \rotatebox{90}{\ ,}\ \rotatebox{90}{$1$}\}$}};
}
\caption{Two $(1,2)$- and $(2,1)$-Dyck tilings of size $4$.}
\label{fig:1kk1}	
\end{figure}
In Figure \ref{fig:1kk1tree}, we depict the decreasing and increasing labeled trees 
for the Dyck tilings in Figure \ref{fig:1kk1}.
\begin{figure}[ht]
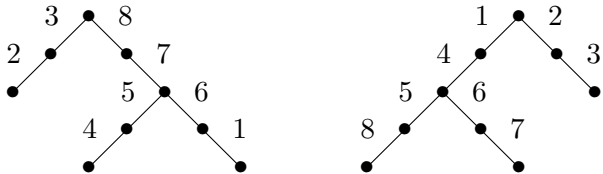

\tikzpic{-0.5}{[scale=0.5]
\draw(0,0)--(-2,-2)(0,0)--(4,-4)(2,-2)--(0,-4);
\draw(0,0)node{$\bullet$}(-1,-1)node{$\bullet$}(-2,-2)node{$\bullet$}(1,-1)node{$\bullet$}
(2,-2)node{$\bullet$}(3,-3)node{$\bullet$}(4,-4)node{$\bullet$}(1,-3)node{$\bullet$}(0,-4)node{$\bullet$};
\draw(-0.5,-0.5)node[anchor=south east]{$3$}(-1.5,-1.5)node[anchor=south east]{$2$}
(0.5,-0.5)node[anchor=south west]{$8$}(1.5,-1.5)node[anchor=south west]{$7$}
(2.5,-2.5)node[anchor=south west]{$6$}(3.5,-3.5)node[anchor=south west]{$1$}
(1.5,-2.5)node[anchor=south east]{$5$}(0.5,-3.5)node[anchor=south east]{$4$};
}\qquad
\tikzpic{-0.5}{[scale=0.5]
\draw(0,0)--(-4,-4)(0,0)--(2,-2)(-2,-2)--(0,-4);	
\draw(0,0)node{$\bullet$}(-1,-1)node{$\bullet$}(-2,-2)node{$\bullet$}
(-3,-3)node{$\bullet$}(-4,-4)node{$\bullet$}(1,-1)node{$\bullet$}(2,-2)node{$\bullet$}
(-1,-3)node{$\bullet$}(0,-4)node{$\bullet$};
\draw(-0.5,-0.5)node[anchor=south east]{$1$}(-1.5,-1.5)node[anchor=south east]{$4$}
(-2.5,-2.5)node[anchor=south east]{$5$}(-3.5,-3.5)node[anchor=south east]{$8$}
(-1.5,-2.5)node[anchor=south west]{$6$}(-0.5,-3.5)node[anchor=south west]{$7$}
(0.5,-0.5)node[anchor=south west]{$2$}(1.5,-1.5)node[anchor=south west]{$3$};
}
\caption{Two labeled trees: the left is decreasing an the right is increasing.}
\label{fig:1kk1tree}
\end{figure}
\end{example}

\subsection{\texorpdfstring{$(a,b)$}{(a,b)}-Dyck tilings}
\label{sec:divab}
Let $D$ be a trivial $(a,b)$-Dyck tiling above an $(a,b)$-Dyck path 
$\mathbb{U}^{a}(\mathbb{D}^{b}(\lambda))$.
We decompose the trivial $(a,b)$-Dyck tiling $D$ into $ab$ Dyck tilings
above $\lambda$.
Let $D_{i,j}$ with $1\le i\le a$ and $1\le j\le b$ be 
the $ab$ Dyck tilings, and let $\alpha_{i,j}$ be the integer 
sequence associated to the Dyck tiling $D_{i,j}$.
Then, the $ab$ sequences $\alpha_{i,j}$ satisfy 
\begin{align}
\label{eq:aii1}
\begin{split}
&\alpha_{i,j}\subseteq_{H} \alpha_{i,j'}, \\
&\alpha_{i,j}\subseteq_{V} \alpha_{i',j},
\end{split}
\end{align}
where $1\le j<j'\le b$ and $1\le i<i'\le a$.
We place $\alpha_{i,j}$ in the plane in the matrix notation.
Then, the condition (\ref{eq:aii1}) implies that 
the sequences in each row and each column are admissible. 
Thus, we have $ab$ admissible conditions on $\alpha_{i,j}$.
If there is a non-trivial Dyck $d_1$ tile of size $n$, then we have 
a non-trivial Dyck tile $d_1$ of size $n$ in $D_{a,b}$.
We fix the position of $d_2$ such that $d_2$ is compatible with the position 
of $d_1$.

Let $\mathrm{wt}_{(a,b)}(D)$ be the weight of $(a,b)$-Dyck tile $D$ 
of size $m$ defined by Eq. (\ref{eq:defwtab}). 
\begin{prop}
We have
\begin{align*}
\mathrm{wt}_{(a,b)}(D)=\sum_{i=1}^{a}\sum_{j=1}^{b}\mathrm{wt}_{(1,1)}(D_{i,j}).
\end{align*}
\end{prop}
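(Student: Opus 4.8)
The plan is to split the weight into a ``box count'' contribution and a ``size correction'' contribution and to show that each is separately additive under the decomposition $D\mapsto(D_{i,j})_{1\le i\le a,\,1\le j\le b}$. By Eq.~(\ref{eq:defwtab}) an $(a,b)$-Dyck tile of size $l$ is a ribbon whose defining $(a,b)$-Dyck path has $(a+b)l$ steps, hence consists of $(a+b)l+1$ unit boxes, and it has weight $(a+b-1)l+1=\big((a+b)l+1\big)-l$. Summing over all tiles of $D$ this gives $\mathrm{wt}_{(a,b)}(D)=\#\{\text{boxes of }D\}-\sum_{d\in D}l(d)$, and likewise $\mathrm{wt}_{(1,1)}(D_{i,j})=\#\{\text{boxes of }D_{i,j}\}-\sum_{d'\in D_{i,j}}l(d')$. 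So it will suffice to prove (i) that the decomposition preserves the total number of boxes, $\#\{\text{boxes of }D\}=\sum_{i,j}\#\{\text{boxes of }D_{i,j}\}$, and (ii) that it preserves the total size of the non-trivial tiles, $\sum_{d\in D}l(d)=\sum_{i,j}\sum_{d'\in D_{i,j}}l(d')$.

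For (i) I would work through the integer-sequence description defining the $\alpha_{i,j}$. Iterating the floor decomposition~(\ref{eq:dec}) first in the $\mathbb{U}^{a}$-direction and then in the $\mathbb{D}^{b}$-direction produces intermediate sequences $\beta_{1},\dots,\beta_{a}$ and then the $\alpha_{i,j}$; since each iteration ends with a division by $1$ and the floor of an integer is that integer, one gets $\sum_{j=1}^{b}\alpha_{i,j}=\beta_{i}$ and $\sum_{i=1}^{a}\beta_{i}=\mu$, hence $\mu=\sum_{i,j}\alpha_{i,j}$ as integer sequences. The number of trivial tiles of the Dyck tiling $D_{i,j}$ above the fixed path $\lambda$ is an affine function of $|\alpha_{i,j}|$, and the number of trivial tiles of $D$ above $\mathbb{U}^{a}(\mathbb{D}^{b}(\lambda))$ is the same affine function (with the additive constants, which depend only on $\lambda$, $a$, $b$, cancelling after summation) of $|\mu|$; combined with $\mu=\sum_{i,j}\alpha_{i,j}$ this yields equality of the numbers of trivial boxes, and the boxes coming from a non-trivial tile are accounted for in (ii).

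For (ii): if $D$ is a genuinely trivial $(a,b)$-Dyck tiling there is nothing to prove, so suppose $D$ carries a single non-trivial $(a,b)$-tile $d_{1}$ of size $m$ (necessarily on the bottom path by cover-inclusiveness). As recorded in the paragraph preceding the proposition, the decomposition sends $d_{1}$ to a single non-trivial Dyck tile of the \emph{same} size $m$ inside $D_{a,b}$, and I would check that all of its remaining $\big((a+b)m+1\big)-(2m+1)=(a+b-2)m$ boxes become trivial Dyck tiles distributed among the $D_{i,j}$. Hence $\sum_{d\in D}l(d)=m=\sum_{i,j}\sum_{d'\in D_{i,j}}l(d')$, which is (ii); combining with (i) one recovers, for this tile, the bookkeeping $(a+b-1)m+1=(m+1)+(a+b-2)m$. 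Parts (i) and (ii) together give the proposition.

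The main obstacle I anticipate is part (i): making the two-dimensional iteration of~(\ref{eq:dec}) precise, i.e.\ verifying that slicing in the $\mathbb{U}^{a}$-direction and then in the $\mathbb{D}^{b}$-direction is well defined and independent of the order, that the resulting $\alpha_{i,j}$ are exactly the sequences attached to the blocks $D_{i,j}$ of the statement, and reconciling the various sign conventions for the integer sequences (the $(n-i)k$-type offsets in the definitions of $\mu$ and of the $\alpha_{i,j}$) so that the linear identity $\mu=\sum_{i,j}\alpha_{i,j}$ genuinely translates into additivity of box counts. Once that is in place, (ii) is routine arithmetic, the only substantive point being that a non-trivial $(a,b)$-tile decomposes into one equally-sized ordinary Dyck tile in the corner block $D_{a,b}$ and otherwise only trivial tiles.
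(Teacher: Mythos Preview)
Your splitting $\mathrm{wt}=\#\text{boxes}-\sum l(d)$ is fine, but the claim that each summand is \emph{separately} preserved under the decomposition is false as soon as the tile has size $m\ge 2$. The paper's own proof (which treats $D$ as a single $(a,b)$-Dyck tile, not a full tiling) shows that the pieces $D_{i,j}$ with $i=a$ or $j=b$ but $(i,j)\neq(a,b)$ are themselves \emph{non-trivial} Dyck tiles of size $m-1$, not collections of trivial boxes. So your step~(ii), ``all remaining $(a+b-2)m$ boxes become trivial tiles,'' breaks down precisely there: for $m\ge 2$ one gets
\[
\sum_{i,j}\sum_{d'\in D_{i,j}} l(d')=(a+b-2)(m-1)+m,
\]
which equals $m$ only when $m=1$ or $a+b=2$. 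Correspondingly the box counts do not match either: the original ribbon has $(a+b)m+1$ boxes, while the pieces have $(a+b-2)(2m-1)+(2m+1)$ boxes in total, and these differ by $(a+b-2)(m-1)$. Thus neither (i) nor (ii) holds on its own; only their difference — the weight — is invariant, and that is exactly what has to be proved, so the split does not buy anything.

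The paper's argument is much shorter and avoids the Hermite-history machinery entirely: since a single $(a,b)$-tile is a ribbon, one reads off directly that $D_{i,j}=\emptyset$ for the interior indices, that the $a+b-2$ border pieces are single $(1,1)$-tiles of size $m-1$ (weight $m$ each), and that the corner $D_{a,b}$ is a single tile of size $m$ (weight $m+1$). Summing gives $(a+b-2)m+(m+1)=(a+b-1)m+1=\mathrm{wt}_{(a,b)}(D)$. If you want to salvage your approach, you could replace (ii) by this explicit identification of the border pieces as size-$(m-1)$ tiles; but then (i) becomes superfluous and you are back to the paper's computation.
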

%%%%%%%%%%
\begin{proof}
We decompose an $(a,b)$-Dyck tile $D$ of size $m$ into $ab$ Dyck tilings $D_{i,j}$ 
for $i\in[1,a]$ and $j\in[1,b]$.  
By definition of a Dyck tile, $D$ is a ribbon. This means that
\begin{enumerate}
\item $D_{i,j}=\emptyset$ if $i\neq a$ and $j\neq b$; 
\item $D_{i,j}$ is a Dyck tile of size $m-1$ for $i=a$ or $j=b$ except $(i,j)=(a,b)$;
\item $D_{a,b}$ is a Dyck tile of size $m$.
\end{enumerate}
From these, we calculate 
\begin{align*}
\sum_{i,j}\mathrm{wt}_{(1,1)}(D_{i,j})&=m(a+b-2)+m+1, \\
&=\mathrm{wt}_{(a,b)}(D),
\end{align*}
which completes the proof.
\end{proof}

\begin{example}
\label{ex:Dyck23}
Consider a $(2,3)$-Dyck tiling of size $3$ consisting of 
single non-trivial Dyck tile of size one.
The decomposition of this Dyck tiling is given by 
\begin{center}
\tikzpic{-0.5}{[scale=0.4]
\draw[very thick](0,0)--(0,2)--(3,2)--(3,4)--(6,4)--(6,6)--(9,6);
\draw(2,2)--(2,5)--(6,5);
\draw[dashed](2,3)--(3,3)(2,4)--(3,4)--(3,5)(4,5)--(4,4)(5,5)--(5,4);
}
\quad $\Leftrightarrow$
\tikzpic{-0.5}{
\node (0)at(0,0){
\tikzpic{-0.5}{[scale=0.4]
\draw[very thick](0,0)--(0,1)--(1,1)--(1,2)--(2,2)--(2,3)--(3,3);
}
};
\node (1)at(3,0){
\tikzpic{-0.5}{[scale=0.4]
\draw[very thick](0,0)--(0,1)--(1,1)--(1,2)--(2,2)--(2,3)--(3,3);
}
};
\node (2)at(6,0){
\tikzpic{-0.5}{[scale=0.4]
\draw[very thick](0,0)--(0,1)--(1,1)--(1,2)--(2,2)--(2,3)--(3,3);
\draw(0,1)--(0,2)--(1,2);
}
};
\node (3)at(0,-2){
\tikzpic{-0.5}{[scale=0.4]
\draw[very thick](0,0)--(0,1)--(1,1)--(1,2)--(2,2)--(2,3)--(3,3);
\draw(1,2)--(1,3)--(2,3);
}
};
\node (4)at(3,-2){
\tikzpic{-0.5}{[scale=0.4]
\draw[very thick](0,0)--(0,1)--(1,1)--(1,2)--(2,2)--(2,3)--(3,3);
\draw(1,2)--(1,3)--(2,3);
}
};
\node (5)at(6,-2){
\tikzpic{-0.5}{[scale=0.4]
\draw[very thick](0,0)--(0,1)--(1,1)--(1,2)--(2,2)--(2,3)--(3,3);
\draw(0,1)--(0,3)--(2,3);
}
};
}
\end{center}
Each row and column consists of admissible Dyck tilings.
The weight of this $(2,3)$-Dyck tiling is five since its size is one.
From the decomposition, the sum of the weights of $(1,1)$-Dyck tilings
is given by $0+0+1+1+1+2=5$. 
\end{example}

\subsection{Decomposition of \texorpdfstring{$(a,b)$}{(a,b)}-Dyck tilings}
Fix an integer $a\ge2$.
In this section, we consider an enumeration of $(a,b)$-Dyck tilings above 
the path $\mathbb{UD}_{n}^{(a,b)}$.
In Section \ref{sec:divab}, we decompose an $(a,b)$-Dyck tiling into 
$ab$ $(1,1)$-Dyck tilings. This decomposition reflects the weight 
of an $(a,b)$-Dyck tiling, however, we have several constraints (\ref{eq:aii1})
on $ab$ Dyck tilings.
To reduce the number of constraints, we decompose an $(a,b)$-Dyck tiling
into $a$ $(1,b)$-Dyck tilings.
This decomposition is well-defined since the lower path of an $(a,b)$-Dyck 
tiling is in $\mathbb{UD}_{n}^{(a,b)}$.

Let $V_{i}:=(v^i_{1},v^{i}_{2},\ldots,v^{i}_{bn})$, $1\le i\le a$, 
be the vertical Hermite histories corresponding to $a$ trivial 
$(1,b)$-Dyck tilings $D_i$, $1\le i\le a$.
By the decomposition of an $(a,b)$-Dyck tiling, we have 
the constraints 
\begin{align}
\label{eq:condvij}
v^{k}_{i}\le v^{j}_{i}\le v^{k}_{i}+1,  
\end{align}
where $i\in[1,bn]$ and $1\le j<k\le a$.
This constraint is equivalent to the condition (\ref{eq:alphaad}) for 
a horizontal Hermite history.

\begin{remark}
Two remarks are in order.
\begin{enumerate}
\item We have two decompositions of $(a,b)$-Dyck tilings: one is into $ab$ $(1,1)$-Dyck tilings, and 
the other is into $a$ $(1,b)$-Dyck tilings. The advantage of the first one is that the weight is preserved 
by the decomposition. The advantage of the second decomposition is that we involve the condition 
(\ref{eq:condvij}) on vertical Hermite histories. This is simpler than the first one, since we involve 
two types of conditions (\ref{eq:aii1}) on both horizontal and vertical Hermite histories in the first case.	
\item
The cover relation on vertical Hermite histories defines a poset structure on them.
By construction, this poset includes the poset $\mathcal{P}$ of $(a,b)$-Dyck paths, which correspond to
trivial $(a,b)$-Dyck tilings, as a subposet. 
Suppose two $(a,b)$-Dyck paths $P_1$ and $P_2$ satisfy that $P_2$ is above $P_1$ and the difference of the 
numbers of unit boxes below $P_1$ and $P_2$ is one. Then, we define $P_1\lessdot P_2$.
The cover relation of the poset $\mathcal{P}$ is simply given by this cover relation $\lessdot$. 
The poset on vertical Hermite histories can be viewed as a natural generalization of the poset $\mathcal{P}$.
\end{enumerate}
\end{remark}

Let $V_i$, $1\le i\le a$, be the vertical Hermite history of a trivial Dyck tiling $D_{i}$.
We say that $\mathbf{V'}:=(V'_1,\ldots,V'_{a})$ covers 
$\mathbf{V}:=(V_1,\ldots, V_{a})$ if and only if 
there exits a unique $j$ such that $V'_{j}$ covers $V_{j}$ as $(1,b)$-Dyck tilings 
and $\mathbf{V}$ and $\mathbf{V'}$ satisfy the condition (\ref{eq:condvij}).

Suppose that an $(a,b)$-Dyck tiling contains a non-trivial Dyck tile. In this case, we first replace 
non-trivial Dyck tiles with trivial Dyck tiles. Then, we decompose the new trivial Dyck tiling 
into $a$ $(1,b)$-Dyck tilings. To recover the non-trivial Dyck tiling, we replace trivial Dyck 
tiles in the corresponding $(1,b)$-Dyck tiling with a non-trivial Dyck tile such that it is 
compatible with the non-trivial Dyck tile in the $(a,b)$-Dyck tiling. 
For example, we have the following correspondence between a $(2,3)$-Dyck tiling and two $(1,3)$-Dyck tilings:
\begin{align*}
\tikzpic{-0.5}{[scale=0.4]
\draw[thick](0,0)--(0,2)--(3,2)--(3,4)--(6,4)--(6,6)--(9,6)--(9,8)--(12,8);
\draw(1,2)--(1,4)--(2,4)--(2,6)--(4,6)--(4,8)--(9,8);
\draw(2,2)--(2,4)(1,3)--(2,3)(2,5)--(5,5)--(5,7)--(9,7)(3,5)--(3,6)(4,5)--(4,6)(8,7)--(8,8);
}\qquad\Leftrightarrow\qquad
\tikzpic{-0.5}{[scale=0.4]
\draw[thick](0,0)--(0,1)--(3,1)--(3,2)--(6,2)--(6,3)--(9,3)--(9,4)--(12,4);
\draw(1,1)--(1,2)--(2,2)(2,1)--(2,3)--(5,3)--(5,4)--(9,4)(4,3)--(4,4)--(5,4);
%%%%%%%
\draw[thick](0,5)--(0,6)--(3,6)--(3,7)--(6,7)--(6,8)--(9,8)--(9,9)--(12,9);
\draw(1,6)--(1,7)--(3,7)--(3,8)(2,6)--(2,8)--(5,8)(4,7)--(4,9)--(9,9)(5,7)--(5,9);
}
\end{align*}
Note that the weight of the $(2,3)$-Dyck tiling is the same as the sum of the weights 
of the two $(1,3)$-Dyck tilings.

To replace trivial Dyck tiles with a non-trivial Dyck tile reduces the weight of the tiling.
We define the cover relation $D_1\lessdot D_2$ in the case where $D_2$ contains a non-trivial Dyck tile as follows.
Let $D'_i$, $i=1,2$, be the Dyck tilings consisting of only trivial Dyck tiles with the same top path as $D_{i}$.
Then, $D_1\lessdot D_2$ if and only if $D'_1\lessdot D'_2$, $\mathrm{wt}_{(a,b)}(D_2)=\mathrm{wt}_{(a,b)}(D_1)-1$ and 
$D_2$ can be obtained from $D_1$ by replacing Dyck tiles with a larger non-trivial tile.
If a Dyck tiling $D$ consists of non-trivial Dyck tiles of maximal size, there is no element which 
covers $D$.
We have a poset of Dyck tilings by the cover relation on vertical Hermite histories.
This poset has a natural grading by the sum of the weights of Dyck tiles.

\begin{example}
We consider the trivial $(2,3)$-Dyck tiling $D$ of size $3$ above the path $(U^2D^3)^3$,
whose vertical Hermite history is $(0,0,3,1,1,1,0,0,0)$.
Inside of this Dyck tiling, we have thirteen trivial Dyck tiling and 
one non-trivial Dyck tiling. Therefore, we have fourteen Dyck tilings 
which are lower than the tiling $D$.
Since $(a,b)=(2,3)$, we have two $(1,3)$-Dyck tilings whose vertical 
Hermite histories are $(002111000)$ and $(001000000)$. 

\begin{figure}[ht]
\begin{align*}
&\genfrac{(}{)}{0pt}{}{2111}{1000} && \genfrac{(}{)}{0pt}{}{1111}{1000} 
&&\genfrac{(}{)}{0pt}{}{1111}{0000} && \genfrac{(}{)}{0pt}{}{0111}{0000} 
&&\genfrac{(}{)}{0pt}{}{0011}{0000} && \genfrac{(}{)}{0pt}{}{0001}{0000}
&&\genfrac{(}{)}{0pt}{}{0000}{0000} \\[12pt]
& && \genfrac{(}{)}{0pt}{}{2110}{1000} && \genfrac{(}{)}{0pt}{}{1011}{1000} 
&& \genfrac{(}{)}{0pt}{}{1001}{1000} && \genfrac{(}{)}{0pt}{}{1000}{1000}
&& \genfrac{(}{)}{0pt}{}{1000}{0000} \\[12pt]
& && && && \genfrac{(}{)}{0pt}{}{1011}{0000} && \genfrac{(}{)}{0pt}{}{1001}{0000}
\end{align*}
\caption{Fourteen Dyck tilings for $(n,a,b)=(3,2,3)$.}
\label{fig:Dyck2314}
\end{figure}
In Figure \ref{fig:Dyck2314}, we list up all fourteen Dyck tilings.
We delete two $0$'s from left and three $0$'s from right in vertical Hermite histories.
The rank of the left-most element is zero, and that of the right-most element
is six.
The element corresponding to $\genfrac{(}{)}{0pt}{}{2110}{1000}$ is the unique non-trivial 
Dyck tiling.
Note that there is no element which covers this non-trivial Dyck tiling.
\end{example}

\bibliographystyle{amsplainhyper} 
\bibliography{biblio}

\providecommand{\bysame}{\leavevmode\hbox to3em{\hrulefill}\thinspace}
\begin{thebibliography}{10}

\bibitem{BerFlaSal92}
F.~Bergeron, P.~Flajolet, and B.~Salvy, \emph{Varieties of increasing trees},
  CAAP '92 (Rennes, 1992), Lecture Notes in Comput. Sci., vol. 581, Springer,
  1992, pp.~24--48.

\bibitem{BerLabLer91}
F.~Bergeron, G.~Labelle, and P.~Leroux, \emph{Computation of the expected
  number of leaves in a tree having a given automorphism, and related topics},
  Discrete Appl. Math. \textbf{34} (1991), 49--66.

\bibitem{BjoWac89}
A.~Bj{\"o}rner and M.~L. Wachs, \emph{$q$-{H}ook {L}ength {F}ormulas for
  {F}orests}, J. Combin. Theory Ser. A \textbf{52} (1989), no.~2, 165--187.

\bibitem{Com74}
L.~Comtet, \emph{{A}dvanced {C}ombinatorics}, Reidel, Dordrecht, 1974.

\bibitem{GesVie85}
I.~Gessel and G.~Viennot, \emph{Binomial determinants, paths, and hook length
  formulae}, Advances in Mathematics \textbf{58} (1985), no.~3, 300--321,
  \href{http://dx.doi.org/https://doi.org/10.1016/0001-8708(85)90121-5}{\path{doi}}.

\bibitem{GouJac83}
I.~P. Goulden and D.~M. Jackson, \emph{{C}ombinatorial {E}numeration}, John
  Wiley, New York, 1983.

\bibitem{JVK16}
M.~Josuat-Verg\`es and J.~S. Kim, \emph{{G}eneralized {D}yck tilings}, Eur. J.
  Combin. \textbf{51} (2016), 458--474,
  \href{http://arxiv.org/abs/1410.1270}{\path{arXiv:1410.1270}}.

\bibitem{KW11}
R.~W. Kenyon and D.~B. Wilson, \emph{{D}ouble-dimer pairings and skew {Y}oung
  diagrams}, Electron. J. Combin. \textbf{18} (2011), no.~1, P130,
  \href{http://arxiv.org/abs/1007.2006}{\path{arXiv:1007.2006}}.

\bibitem{KW15}
\bysame, \emph{{S}panning trees of graphs on surfaces and the intensity of
  loop-erased random walk on planar graphs}, J. Amer. Math. Soc. \textbf{28}
  (2015), 985--1030,
  \href{http://arxiv.org/abs/1107.3377}{\path{arXiv:1107.3377}}.

\bibitem{K12}
J.~S. Kim, \emph{{P}roofs of two conjectures of {K}enyon and {W}ilson on {D}yck
  tilings}, J. Combin. Theory Ser. A \textbf{119} (2012), no.~8, 1692--1710,
  \href{http://arxiv.org/abs/1108.5558}{\path{arXiv:1108.5558}}.

\bibitem{KMPW12}
J.~S. Kim, K.~M\'esz\'aros, G.~Panova, and D.~B. Wilson, \emph{{D}yck tilings,
  increasing trees, descents and inversions}, J. Combin. Theory Ser. A
  \textbf{122} (2014), 9--27,
  \href{http://arxiv.org/abs/1205.6578}{\path{arXiv:1205.6578}}.

\bibitem{Knu73}
D.~E. Knuth, \emph{The {A}rt of {C}omputer {P}rogramming}, vol.~3,
  Addison-Wesley Publishing Co., 1973.

\bibitem{Lin73}
B.~Lindstr{\"o}m, \emph{On the vector representations of induced matroids},
  Bull. London Math. Soc. \textbf{5} (1973), 85--90,
  \href{http://dx.doi.org/https://doi.org/10.1112/blms/5.1.85}{\path{doi}}.

\bibitem{Shi21c}
K.~Shigechi, \emph{Ratinal {D}yck tilings}, preprint (2021), 20 pages,
  \href{http://arxiv.org/abs/2104.03085}{\path{arXiv:2104.03085}}.

\bibitem{Shi21b}
\bysame, \emph{Rational {D}yck paths an decompositions}, preprint (2021), 32
  pages, \href{http://arxiv.org/abs/2104.01877}{\path{arXiv:2104.01877}}.

\bibitem{SZJ12}
K.~Shigechi and P.~Zinn-Justin, \emph{{P}ath representation of maximal
  parabolic {K}azhdan--{L}usztig polynomials}, J. Pure Appl. Alegebra
  \textbf{216} (2012), no.~11, 2533--2548,
  \href{http://arxiv.org/abs/1001.1080}{\path{arXiv:1001.1080}}.

\end{thebibliography}

\end{document}